\documentclass{amsart}
\usepackage{amssymb}

\newtheorem{theorem}{Theorem}[section]
\newtheorem{lemma}[theorem]{Lemma}
\newtheorem{corollary}[theorem]{Corollary}

\theoremstyle{definition}

\newtheorem*{acknowledge}{Acknowledgment}

\DeclareMathOperator{\hI}{\hat{\mathcal{I}}}

\theoremstyle{remark}

\numberwithin{equation}{section}

\setcounter{secnumdepth}{3}

\newif\ifshowflag
\showflagtrue   
\newif\ifshowqstn
\showqstntrue   
\showqstnfalse 
\newif\ifshowinfo
\showinfotrue   
\showinfofalse 






\renewcommand{\phi}{\varphi}


\DeclareMathOperator{\diam}{Diam}






\hyphenation{ho-meo-mor-phism ho-meo-mor-phisms self-ho-meo-mor-phism self-ho-meo-mor-phisms quasi-hy-per-bo-lic -quasi-ho-mo-gen-eity}

\catcode`\@=11       

\def\rf#1{\@rf{#1}#1:;;}
\def\rfs#1{\@rfs{#1}#1:;;}
\def\rfm#1{\@rfF#1<>;;}

\def\@C{C}\def\@CC{CC}\def\@E{E}\def\@F{F}\def\@L{L}\def\@P{P}\def\@Q{Q}
\def\@R{R}\def\@K{K}\def\@S{S}\def\@T{T}\def\@TT{TT}\def\@X{X}\def\@s{s}\def\@f{f}\def\@A{A}

\def\@rf#1#2:#3;;{\def\@b{#2}
  \ifx\@b\@C Corollary~\ref{#1}\else%
  \ifx\@b\@E (\ref{#1})\else
  \ifx\@b\@F Fact~\ref{#1}\else%
  \ifx\@b\@L Lemma~\ref{#1}\else%
  \ifx\@b\@P Proposition~\ref{#1}\else%
  \ifx\@b\@Q Question~\ref{#1}\else%
  \ifx\@b\@R Remark~\ref{#1}\else%
  \ifx\@b\@K Conjecture~\ref{#1}\else%
  \ifx\@b\@S Section~\ref{#1}\else%
  \ifx\@b\@A Appendix~\ref{#1}\else%
  \ifx\@b\@T Theorem~\ref{#1}\else%
  \ifx\@b\@TT Theorem~\ref{#1}\else%
  \ifx\@b\@X Example~\ref{#1}\else%
  \ifx\@b\@s Subsection~\ref{#1}\else
  \ifx\@b\@f Figure~\ref{#1}\else%
  \ref{#1}\fi\fi\fi\fi\fi\fi\fi\fi\fi\fi\fi\fi\fi\fi\fi}

\def\@rfs#1#2:#3;;{\def\@b{#2}
  \ifx\@b\@C Corollaries~\ref{#1}\else%
  \ifx\@b\@CC Corollaries~\ref{#1}\else%
  \ifx\@b\@F Facts~\ref{#1}\else%
  \ifx\@b\@L Lemmas~\ref{#1}\else%
  \ifx\@b\@P Propositions~\ref{#1}\else%
  \ifx\@b\@Q Questions~\ref{#1}\else%
  \ifx\@b\@R Remarks~\ref{#1}\else%
  \ifx\@b\@S Sections~\ref{#1}\else%
  \ifx\@b\@T Theorems~\ref{#1}\else%
  \ifx\@b\@TT Theorems~\ref{#1}\else%
  \ifx\@b\@X Examples~\ref{#1}\else%
  \ifx\@b\@s \S\ref{#1}\else
  \ifx\@b\@f Figures~\ref{#1}\else%
  \ref{#1}\fi\fi\fi\fi\fi\fi\fi\fi\fi\fi\fi\fi\fi}

\def\@rfF<#1>#2;;{\def\@c{#2}
  \@rfs{#1}#1:;;\ifx\@c\empty\else\@rfL:#2;;\fi}

\def\@rfL:#1<#2>#3;;{\def\@b{#2}\def\@c{#3}
  #1\ifx\@b\empty\else\ref{#2}\ifx\@c\empty\else\@rfL:#3;;\fi\fi}

\catcode`\@=12       

\def\vint_#1{\mathchoice
          {\mathop{\vrule width 6pt height 3 pt depth -2.5pt
                  \kern -8pt \intop}\nolimits_{\kern -4pt#1}}%
          {\mathop{\vrule width 5pt height 3 pt depth -2.6pt
                  \kern -6pt \intop}\nolimits_{#1}}%
          {\mathop{\vrule width 5pt height 3 pt depth -2.6pt
                  \kern -6pt \intop}\nolimits_{#1}}%
          {\mathop{\vrule width 5pt height 3 pt depth -2.6pt
                   \kern -6pt \intop}\nolimits_{#1}}}



          

\begin{document}

\title{Weak Quasicircles Have Lipschitz Dimension 1}

\author{David M. Freeman}
\address{University of Cincinnati Blue Ash College, Department of Math, Physics, and Computer Science, 9555 Plainfield Rd., Cincinnati, OH 45236, United States}
\email{david.freeman@uc.edu}

\subjclass[2020]{30L05 (Primary); 54F45, 30C65 (Secondary)}

\date{\today}

\commby{Nageswari Shanmugalingam}

\begin{abstract}
We prove that the Lipschitz dimension of any bounded turning Jordan circle or arc is equal to 1. Equivalently, the Lipschitz dimension of any weak quasicircle or arc is equal to 1.
\end{abstract}

\maketitle

\section{Introduction}                                                  

In \cite{CK13}, Cheeger and Kleiner introduced the concept of Lipschitz dimension and proved deep results about metric spaces of Lipschitz dimension at most 1. Subsequently, in \cite{David19}, David further developed various  properties of Lipschitz dimension. While studying the non-invariance of Lipschitz dimension under quasisymmetric mappings in a general metric space setting, David asks in Question 8.7 of \cite{David19} if every quasisymmetric image of the unit interval (that is, a \textit{quasiarc}) has Lipschitz dimension equal to 1. We answer this question in the affirmative, thus demonstrating that the Lipschitz dimension of the unit interval is invariant under quasisymmetric homeomorphisms. In fact, we prove something stronger: the Lipschitz dimension of the unit interval is invariant under weakly quasisymmetric homeomorphisms. This can be derived from the following theorem, which constitutes the main result of this paper (see \rf{S:defs} for definitions). 

\begin{theorem}\label{T:main}
Bounded turning Jordan circles/arcs have Lipschitz dimension $1$.
\end{theorem}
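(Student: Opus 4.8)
The plan is to prove the two estimates $\dim_L(\Gamma)\ge 1$ and $\dim_L(\Gamma)\le 1$ separately, where $\Gamma$ denotes a bounded turning Jordan arc or circle. The lower bound is soft: $\Gamma$ is a nondegenerate continuum, and such a space admits no Lipschitz light map to a $0$-dimensional simplicial complex, since connectedness forces any such map to be constant while a constant map violates the fiber condition at small scales (a connected set covered by $N$ sets of diameter at most $\epsilon$ has diameter at most $N\epsilon$). So $\dim_L(\Gamma)\ge 1$, and I would simply cite this from \cite{David19}.

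For the upper bound I would describe the arc case, the circle being identical with the partitions below taken to be cyclic. Let $\Gamma$ be a bounded turning arc with constant $C$, so that for $x,y\in\Gamma$ the subarc $\Gamma[x,y]$ joining them satisfies $d(x,y)\le\diam(\Gamma[x,y])\le C\,d(x,y)$; after replacing $d$ by the comparable ``diameter metric'' $\delta(x,y):=\diam(\Gamma[x,y])$ if convenient, subarc diameters become comparable to endpoint distances. A greedy procedure along $\Gamma$ then produces, for each $k\ge 0$, a partition $\mathcal{P}_k$ of $\Gamma$ into consecutive subarcs of diameter $\asymp 2^{-k}$, with $\mathcal{P}_{k+1}$ refining $\mathcal{P}_k$. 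From $\{\mathcal{P}_k\}_k$ I record the parent/child relation between consecutive partitions, and at each scale the ``neighbour'' relation on the pieces of $\mathcal{P}_k$ whose union has diameter comparable to $2^{-k}$.

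I then assemble a metric graph $G$: one vertex for each piece (at every scale), a ``vertical'' edge of length $\asymp 2^{-(k+1)}$ from each piece of $\mathcal{P}_k$ to each of its children, and a ``horizontal'' edge of length $\asymp 2^{-k}$ between neighbouring pieces of $\mathcal{P}_k$. Define $f\colon\Gamma\to G$ by sending $x$ to the point of the completion of $G$ determined by the nested sequence of pieces containing $x$ (equivalently, for each scale $\rho$ one maps $\Gamma$ into the finite level-$\lceil\log_2(1/\rho)\rceil$ truncation of $G$ by $x\mapsto$ the piece of $\mathcal{P}_{\lceil\log_2(1/\rho)\rceil}$ containing $x$, which is the cleaner bookkeeping against the definition of Lipschitz dimension). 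Two properties are needed. First, $f$ is Lipschitz: when $d(x,y)\asymp 2^{-k}$, the level-$k$ pieces containing $x$ and $y$ have union of diameter $O(2^{-k})$, hence are joined by boundedly many horizontal edges at level $k$, while climbing from $f(x)$ and from $f(y)$ to level $k$ costs $O(2^{-k})$; so $d_G(f(x),f(y))\lesssim 2^{-k}\asymp d(x,y)$. Second, $f$ does not collapse distances: every edge of $G$ has length bounded below by a fixed multiple of the displacement it realizes in $\Gamma$ (a horizontal edge joins pieces $O(2^{-k})$ apart; a vertical edge joins a piece to a sub-piece inside it), so any path in $G$ from $f(x)$ to $f(y)$ has length $\gtrsim d(x,y)$. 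Hence $d_G(f(x),f(y))\gtrsim d(x,y)$, from which the Lipschitz-light fiber bound is immediate — the preimage of an $r$-ball has diameter $O(r)$ and is a single controlled piece — and in fact $f$ is a bi-Lipschitz embedding. Since Lipschitz dimension does not increase under bi-Lipschitz embeddings (the relevant maps restrict), and $G$ is a $1$-dimensional simplicial complex, this yields $\dim_L(\Gamma)\le 1$.

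The step I expect to be the main obstacle is extracting from the bounded turning hypothesis the combinatorial control that underlies the construction: that the family $\{\mathcal{P}_k\}$ has bounded multiplicity — at each scale $2^{-k}$ only boundedly many pieces cluster near a given point — and that the ``neighbour'' nerves are geodesically faithful, so that pieces are close in the nerve precisely when their subarcs are close in $\Gamma$. In effect one must show that a bounded turning arc looks, at every scale and up to bounded complexity, like an interval. This is exactly what can fail for a general arc, which may spiral at small scales so that the nerves acquire uncontrolled short-cuts; it is where the constant $C$ enters quantitatively, and it is the technical heart of the argument.
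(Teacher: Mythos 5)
Your lower bound is fine, and the cubical decomposition idea is a natural instinct, but the upper bound as written has a gap that is not the one you flag at the end. The argument hinges on the assertion that because $G$ is a $1$-dimensional simplicial complex, it (or rather its completion $\overline G$, which is where $f$ actually lands) has Lipschitz dimension at most $1$. That inference is false in general: topological dimension does not control Lipschitz dimension. The definition requires an actual Lipschitz light map to $\mathsf{R}$ with a uniform constant, and an infinite metric graph with edges at all scales and cycles at all scales (which yours has, since a horizontal edge at level $k$ together with its children's horizontal edges and the connecting vertical edges forms a short cycle) need not admit one. Indeed, one can build metric graphs containing rescaled $n\times n$ grids at scale $1/n$ for all $n$; any uniformly Lipschitz light $F:G\to\mathsf{R}$ would restrict to maps of these grids and pass to a Lipschitz light map $[0,1]^2\to\mathsf{R}$ in the limit, which is impossible. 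So ``$G$ is a graph'' is not enough. Worse, the step is essentially circular: if your map $f$ really is bi-Lipschitz onto its image in $\partial G\subset\overline G$, then $\dim_{\mathcal L}(\overline G)\geq\dim_{\mathcal L}(\Gamma)$ by restriction, so proving $\dim_{\mathcal L}(\overline G)\leq1$ is at least as hard as the theorem you are trying to prove. What Cheeger--Kleiner actually show is that Lipschitz dimension at most $1$ is equivalent to being bi-Lipschitz to the inverse limit of an \emph{admissible} inverse system of graphs; the admissibility conditions on the bonding maps are the whole content, and your construction does not engage with them. Relatedly, your ``does not collapse distances'' step only establishes an edge-by-edge lower bound; passing from that to a path-length lower bound requires controlling how the nested and horizontal jumps telescope, which is doable but not automatic.

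The paper takes a genuinely different route that sidesteps this entirely. It does not try to embed $\Gamma$ into a graph and then bound the graph's Lipschitz dimension. Instead, it uses the Herron--Meyer theorem to replace $\Gamma$ by a circle $(\mathsf S,d_\Delta)\in\mathcal S_1$ carrying a dyadic diameter function with snowflake parameter $1$, so that each dyadic child interval $I'$ of $I$ satisfies the sharp dichotomy $\Delta(I')\in\{\tfrac12\Delta(I),\Delta(I)\}$. This dichotomy is exactly what makes the ``folding maps'' $f_n:\Gamma_{n+1}\to\Gamma_n$ work: $f_n$ is the identity on $I$ in the first case and an explicit $3$-to-$1$ fold in the second, and it is $1$-Lipschitz (\rf{L:fold}). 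Composing and passing to the limit gives a single $1$-Lipschitz map $F_0:\Gamma\to\Gamma_0$ to the round circle, which already has a Lipschitz light map to $\mathsf R$. The nontrivial work --- the analogue of what you would need to do for $\overline G$ --- is then a direct, hands-on verification that $F_0$ is Lipschitz light, carried out in \rfs{L:def}--\ref{L:final} by tracking how $\delta$-components of preimages $V_n=F_{0,n-1}^{-1}(H)$ evolve under the folding maps, using the dichotomy at every level to show $\diam_n(V_n)$ stays $O(\delta)$. That is the actual technical heart, and it has no counterpart in your outline. The bounded-multiplicity and nerve-faithfulness issues you single out are real but secondary; they are handled for free once one has passed to the Herron--Meyer model.
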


Since bounded turning Jordan arcs need not be metrically doubling, not all bounded turning Jordan arcs are quasiarcs. On the other hand, every quasiarc is bounded turning. Analogous statements hold for Jordan circles. Therefore, in answer to \cite[Question 8.7]{David19}, we obtain the following corollary.

\begin{corollary}\label{C:main}
Quasicircles/arcs have Lipschitz dimension $1$.
\end{corollary}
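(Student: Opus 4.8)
The plan is to establish the theorem in the equivalent form ``Lipschitz dimension $\le 1$''. The lower bound $\ge 1$ is immediate: a connected metric space with more than one point admits no Lipschitz light map to a point, since such a map would force every ball preimage — the whole space — to have diameter $0$. So the substance is to produce, for a given \bt{} Jordan arc or circle $\Gamma$ (of \bt{} constant $K$), a single Lipschitz light map $f\colon\Gamma\to\mathbb R$, i.e.\ a Lipschitz $f$ with $\diam[x,y]\lesssim\diam f([x,y])$ for every subarc $[x,y]\subseteq\Gamma$ (the latter being exactly the condition that connected components of $f^{-1}$ of a ball of radius $r$ have diameter $\lesssim r$). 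I would first reduce the circle case to the arc case: choosing $p,q\in\Gamma$ with $\dist(p,q)\ge\tfrac12\diam\Gamma$ splits a \bt{} circle into two subarcs each of which is again \bt{} with constant $\lesssim K$ (for nearby points the subarc joining them inside one half is forced to be the shorter of the two arcs of $\Gamma$, since the other one contains the long half), so it suffices to build the map on each half and glue along $p$ and $q$; this introduces only two extra ``folds'' and hence costs only a bounded factor in the estimates.

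For a \bt{} arc $\Gamma$ (normalized so $\diam\Gamma=1$) I would fix a small $\lambda=\lambda(K)$ and build a hierarchical decomposition into subarcs: set $\mathcal P_0=\{\Gamma\}$, and subdivide each $J\in\mathcal P_k$ of diameter $\delta$ into consecutive subarcs $J_1,\dots,J_m$ by cutting greedily along $J$ so that $\diam J_i=\lambda\delta$ for all $i$ except possibly the last. This is legitimate because, by \bt{}, the function assigning to a parameter the diameter of the corresponding initial subarc of $J$ is continuous and nondecreasing. One records that $\diam P\le K\dist(\text{endpoints of }P)\le K\diam P$ for every piece $P$, and that every level-$k$ piece has diameter $\le\lambda^k$; note that since a \bt{} arc need not be metrically doubling, a subarc of small diameter may meet many pieces at its scale, and the construction must tolerate this.

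Next I would construct $f$ simultaneously with an assignment $P\mapsto I_P$ of an interval $I_P\subseteq\mathbb R$ of length comparable to $\diam P$, so that $f(P)\subseteq I_P$, the endpoints of $I_P$ are the $f$-images of the endpoints of $P$, and consecutive siblings' intervals abut at the image of their common endpoint. The values of $f$ on the dense set of all piece-endpoints are assigned top-down: having fixed $f$ on the two endpoints of $P$, one routes a ``zig-zag'' path confined to $I_P$ from one to the other whose successive steps are the children $J_1,\dots,J_m$, each step of length $\asymp\diam J_i$, and then $f$ is the continuous extension to $\Gamma$. The Lipschitz estimate is then essentially formal: given $x,y$, let $P$ be the smallest piece containing $[x,y]$; once $[x,y]$ meets three children of $P$ one has $\diam P\lesssim\diam[x,y]\asymp\dist(x,y)$, and $|f(x)-f(y)|\le|I_P|\asymp\diam P$ (the case where $[x,y]$ meets only two children is handled by descending one level). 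One must also keep the overshoot of the nested routings summable, so that in fact $f(P)$ lies in an enlargement of $I_P$ of comparable size.

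The main obstacle is lightness. For a subarc meeting at least three children of its minimal piece $P$ the bound is direct: $[x,y]$ then contains a whole middle child, whose image is a union of consecutive zig-zag steps and hence an interval whose length controls $\diam P\asymp\diam[x,y]$; this uses the \emph{sweeping} property — arrange the zig-zag to reverse direction only at the ends of $I_P$ — so that every block of consecutive steps covers a definite fraction of its total length. The delicate point is a subarc straddling a single endpoint $v$ shared by exactly two children together with a \emph{fold} of $f$ at $v$: then $f([x,y])$ only controls the larger of the two one-sided diameters, which naively loses a factor $2$ per level and threatens to compound without bound. I expect the real work to lie here, and the resolution to be to make the decomposition and the routing \emph{adaptive}: arrange that folds occur only at pieces geometrically small relative to the scale at which the zig-zag last turned (so that a subarc straddling a fold either has one side negligible against the other, or already has controlled diameter), while across every non-folding endpoint the abutting-interval structure makes the two one-sided diameters \emph{add} rather than max, which costs nothing. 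Carrying this through yields lightness with a constant depending only on $K$, hence $\dim_{\mathrm{Lip}}\Gamma\le1$, giving \rf{T:main}; and \rf{C:main} follows because every quasiarc/quasicircle is \bt{} with constant depending only on the quasisymmetry data, so the theorem applies verbatim.
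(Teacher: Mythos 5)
Your final sentence — that a quasiarc/quasicircle is bounded turning with constant depending only on the quasisymmetry data, so the result reduces to the bounded-turning case — is exactly the paper's proof of this corollary given Theorem~\ref{T:main}. Everything before it is a sketch of a direct proof of Theorem~\ref{T:main} itself, and that sketch both diverges from the paper's route and, by your own admission, leaves the hardest step unresolved.

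You correctly isolate the obstacle: lightness fails naively because a subarc straddling a fold only controls the larger of the two one-sided diameters, and this factor-two loss threatens to compound across scales. You propose an ``adaptive'' decomposition and routing to tame it, but you do not carry this out, and it is not clear how to with a greedy decomposition of an arbitrary bounded-turning arc: absent doubling or any uniformity, one has no control on how the folds in nested pieces align, which is precisely what would need to be organized. The paper sidesteps this entirely by first invoking the Herron--Meyer theorem: every bounded-turning Jordan circle is bi-Lipschitz to a circle $(\mathsf{S}, d_\Delta)$ in the catalog $\mathcal{S}_1$, where the metric comes from a dyadic diameter function with snowflake parameter $\sigma = 1$, so every dyadic child has either half or the full diameter of its parent. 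This rigid structure allows one to write down explicit $1$-Lipschitz folding maps $f_n:\Gamma_{n+1}\to\Gamma_n$ (a genuine fold occurring exactly when a child retains full diameter) and to prove that their limit $F_0:\Gamma\to\Gamma_0$ is Lipschitz light by tracking, scale by scale, whether the preimage component $V_n$ is symmetric about a dyadic point (Lemmas~\ref{L:cases} through~\ref{L:final}). In short, the paper does not fix the fold-compounding problem within your framework; it replaces your ad hoc decomposition with a canonical dyadic one in which the folds can be enumerated and their cumulative effect bounded by a universal constant ($128$ in Lemma~\ref{L:final}). The reduction you make at the end is correct and matches the paper; the sketch of the theorem you rely on has a genuine gap at the fold-compounding step and would need the ``adaptive'' idea to be made precise before it could stand as a proof.
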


We also point out results of Cheeger and Kleiner pertaining to spaces of Lipschitz dimension at most 1. In particular, via \cite[Theorem 1.7]{CK13}, we have the following corollary to \rf{T:main}.

\begin{corollary}\label{C:embed}
If $\Gamma$ is a bounded turning Jordan circle or arc, then there exists a measure space $(X,\mu)$ such that $\Gamma$ admits a bi-Lipschitz embedding into $L_1(X,\mu)$. 
\end{corollary}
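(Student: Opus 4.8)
The plan is to obtain \rf{C:embed} directly from \rf{T:main} by feeding it into the structure theory for Lipschitz $1$-dimensional spaces developed by Cheeger and Kleiner. First I would record the trivial but necessary point that a bounded turning Jordan circle or arc $\Gamma$ is, by definition, a homeomorphic image of $S^1$ or $[0,1]$, hence a compact metric space; in particular $\Gamma$ is complete and bounded, so it falls under the standing hypotheses of \cite[Theorem 1.7]{CK13}.

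Next I would invoke \rf{T:main}, which asserts that $\Gamma$ has Lipschitz dimension exactly $1$, and in particular Lipschitz dimension at most $1$. This is precisely the hypothesis of \cite[Theorem 1.7]{CK13}: a metric space of Lipschitz dimension at most $1$ (under the stated completeness/boundedness hypotheses) is realized as an inverse limit of metric graphs, and since metric graphs, and more generally $\mathbb{R}$-trees, admit bi-Lipschitz embeddings into $L_1$ with uniformly controlled distortion, this control passes to the inverse limit and yields a bi-Lipschitz embedding of $\Gamma$ into $L_1(X,\mu)$ for a suitable measure space $(X,\mu)$. Applying that theorem to $\Gamma$ gives exactly the asserted conclusion.

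The only genuine verification beyond citing \rf{T:main} is that the side hypotheses of \cite[Theorem 1.7]{CK13} are satisfied here. The subtle point — flagged already in the discussion preceding \rf{C:main} — is that a bounded turning Jordan arc need not be metrically doubling, so it is essential that the Cheeger--Kleiner embedding result requires only a bound on Lipschitz dimension (together with completeness, which we have by compactness) and \emph{not} the doubling property. Granting this, there is no substantive obstacle: the entire analytic content is carried by \rf{T:main}, and \rf{C:embed} follows by combining it with \cite[Theorem 1.7]{CK13}.
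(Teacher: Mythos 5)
Your proposal is correct and follows the same route as the paper: the corollary is obtained by combining \rf{T:main} with \cite[Theorem 1.7]{CK13}, exactly as you describe. Your additional remarks on completeness, boundedness, and the irrelevance of the doubling hypothesis are accurate but simply elaborate on the side conditions that the paper leaves implicit.
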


Furthermore, via \cite[Theorem 1.11]{CK13}, we also have the following. 

\begin{corollary}\label{C:inverse}
If $\Gamma$ is a bounded turning Jordan circle or arc, then $\Gamma$ is bi-Lipschitz homeomorphic to an inverse limit of an admissible inverse system of graphs. 
\end{corollary}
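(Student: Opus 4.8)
The plan is to obtain \rf{C:inverse} as an immediate consequence of \rf{T:main} together with \cite[Theorem 1.11]{CK13}, so that the only real content is verifying the hypotheses of the cited theorem. Being a homeomorphic image of $S^1$ or of $[0,1]$, any bounded turning Jordan circle or arc $\Gamma$ is compact, and in particular complete. By \rf{T:main}, $\Gamma$ has Lipschitz dimension $1$, and hence Lipschitz dimension at most $1$. These are precisely the hypotheses under which \cite[Theorem 1.11]{CK13} produces an admissible inverse system of graphs whose inverse limit is bi-Lipschitz homeomorphic to the given space; applying it to $\Gamma$ gives the corollary.

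The one point that genuinely requires care --- and thus the ``main obstacle,'' modest as it is --- is confirming that \cite[Theorem 1.11]{CK13} imposes no regularity assumption beyond completeness, and in particular no metric doubling hypothesis. This matters because, as noted after \rf{T:main}, a bounded turning Jordan arc need not be metrically doubling: if the cited theorem required doubling, we could only assert \rf{C:inverse} for quasicircles and quasiarcs. As stated in \cite{CK13}, however, the realization theorem concerns complete metric spaces and involves no doubling assumption --- admissibility of the inverse system does not force the approximating graphs to have uniformly bounded local combinatorics, so its inverse limit need not be doubling --- and therefore no such restriction is needed. The same observation handles \rf{C:embed} via \cite[Theorem 1.7]{CK13}, whose conclusion is transcribed there verbatim.

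For perspective I also indicate how one would argue without appealing to \cite{CK13}. Using the bounded turning property, one covers $\Gamma$ at each scale $k$ by a uniformly controlled family of overlapping sub-arcs of diameter comparable to $2^{-k}$, takes the $k$-th graph to be the adjacency graph of this family with every edge length comparable to $2^{-k}$, and lets the bonding map from level $k+1$ to level $k$ record which scale-$k$ sub-arc contains a given scale-$(k+1)$ sub-arc; bounded turning is exactly what forces the resulting metric on the inverse limit to be bi-Lipschitz to the original metric on $\Gamma$. Along this route I would expect the bulk of the work to be checking the admissibility axioms for the system just described, which is precisely the package that \cite[Theorem 1.11]{CK13} supplies, and the reason the citation-based proof is preferable.
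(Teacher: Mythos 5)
Your proposal is correct and follows exactly the paper's route: \rf{C:inverse} is obtained by applying \cite[Theorem 1.11]{CK13} to the conclusion of \rf{T:main}, with the paper's only added gloss being the remark that, given a Lipschitz light map $F:X\to\mathsf{R}$, Cheeger and Kleiner explicitly construct the admissible inverse system in \cite[Section 4]{CK13}. Your observation that the cited theorem imposes no doubling hypothesis is a sound and worthwhile sanity check given that bounded turning arcs need not be doubling, though the paper does not pause to address it.
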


Indeed, given a metric space $X$ and a Lipschitz light map $F:X\to\mathsf{R}$,  Cheeger and Kleiner explicitly construct an admissible inverse system of graphs whose inverse limit is bi-Lipschitz homeomorphic to $X$ (see \cite[Section 4]{CK13}).

\rf{T:main} is also relevant to the following result of David.

\begin{theorem}[Theorem 5.9 of \cite{David19}]\label{T:Carnot}
Let $\mathbb{G}$ denote a non-abelian Carnot group, and let $K\subset \mathbb{G}$ denote a compact subset of positive measure. Then the Lipschitz dimension of $K$ is equal to $\infty$. 
\end{theorem}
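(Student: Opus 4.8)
The plan is to show that for every integer $n\ge 1$, the set $K$ admits no Lipschitz light map into $\mathbb{R}^n$; since the Lipschitz dimension of $K$ is the infimum of those $n$ for which such a map exists, this forces it to be $\infty$. So suppose, for contradiction, that $f\colon K\to\mathbb{R}^n$ is $L$-Lipschitz and Lipschitz light with light constant $N$. Here it is essential to use the scale-invariant formulation of the Lipschitz light condition — controlling $\delta$-chain-components of $f^{-1}(\bar B(y,r))$ for all $\delta\le r$ — rather than a naive topological-component version, which for a totally disconnected $K$ would be vacuous.

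First I would extend $f$ to an $L'$-Lipschitz map $\bar f\colon\mathbb{G}\to\mathbb{R}^n$ by a coordinatewise McShane extension, and then invoke the Pansu--Rademacher theorem: $\bar f$ is Pansu differentiable almost everywhere, hence, since $|K|>0$, at almost every point of $K$. Using the Lebesgue density theorem, fix $p\in K$ that is simultaneously a density point of $K$ and a point of Pansu differentiability of $\bar f$, and let $D\colon\mathbb{G}\to\mathbb{R}^n$ be the Pansu differential. The algebraic heart of the matter is that $D$ is a homogeneous group homomorphism into an abelian group, so it factors through the abelianization $\mathbb{G}/[\mathbb{G},\mathbb{G}]$; because $\mathbb{G}$ is non-abelian, $[\mathbb{G},\mathbb{G}]$ is a nontrivial closed connected subgroup containing the noncompact center, so $\ker D\supseteq[\mathbb{G},\mathbb{G}]$ is an unbounded connected subgroup, and for each $r>0$ the tube $\Omega_r:=D^{-1}(\bar B(0,r))$ is connected and of infinite diameter.

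Next I would blow up at $p$: with the dilations $\delta_{1/t}$ as $t\downarrow 0$, the rescaled maps $f_t:=\tfrac1t\bigl(\bar f(p\,\delta_t(\cdot))-\bar f(p)\bigr)$ converge locally uniformly to $D$, the rescaled sets $K_t:=\delta_{1/t}(p^{-1}K)$ have relative measure tending to $1$ in any fixed ball (density at $p$), and $f_t$ restricted to $K_t$ is again $L$-Lipschitz and Lipschitz light with the same constants $L,N$, by scale invariance. The step that bridges measure and connectivity is the elementary observation that a subset of relative measure close to $1$ in a ball is a $\rho$-net there for all small $\rho$, and a $\rho$-net in a connected open set is $\rho$-chain-connected inside it. Applying this inside $\Omega_{2r}$: for $t$ small, $K_t\cap\Omega_{2r}\cap B(0,\sigma)$ is $\rho$-chain-connected, it is carried by $f_t$ into $\bar B(0,3r)$ (since $f_t\approx D$ on $B(0,\sigma)$), and it has diameter at least $\sigma-O(1)$ because $\ker D$ is unbounded. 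Taking $\sigma\gg Nr$ then contradicts the Lipschitz light bound, which would force this chain-connected piece of $f_t^{-1}(\bar B(0,3r))\cap K_t$ to have diameter at most $3Nr$. Hence no such $f$ exists.

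The main obstacle is precisely this reconciliation of the measure-theoretic input (Pansu differentiability holds only a.e., and $K$ is merely a positive-measure set, possibly totally disconnected) with the connectivity nature of the Lipschitz light condition: one must both pin down the correct chain-based definition and verify carefully that "full relative density near the blow-up point" propagates to "$\rho$-chain-connectedness" along the entire unbounded tube $\Omega_r$, uniformly enough to feed a genuinely macroscopic set into the Lipschitz light estimate. The remaining ingredients — Lipschitz extension, Pansu--Rademacher, scale invariance of the Lipschitz light condition, and the identification of the blow-up with a homomorphism that collapses the center — are standard; non-abelianness of $\mathbb{G}$ is used in exactly one place, to guarantee that $\ker D$ is unbounded.
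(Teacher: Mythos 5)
This theorem is quoted in the paper from \cite{David19} (Theorem 5.9) and is \emph{not} proved here---the paper uses it only as a black box to deduce a non-embedding corollary---so there is no in-paper argument to compare against. That said, your sketch follows what I believe is essentially David's line of attack, and it is sound: extend $f$ to a Lipschitz map on all of $\mathbb{G}$, invoke Pansu--Rademacher and the Lebesgue density theorem at a common good point $p\in K$, note that the Pansu differential $D$ factors through the abelianization and so has an unbounded connected kernel precisely because $\mathbb{G}$ is non-abelian, and blow up to import the failure of $D$ to be Lipschitz light into $f|_K$. Using the chain-component (rather than topological-component) formulation, its scale invariance, and the ``high density $\Rightarrow$ $\rho$-net $\Rightarrow$ $\rho$-chain-connected'' mechanism is exactly right.

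A few places would need firming up, though none is a genuine gap. You need $\Omega_r\cap B(0,\sigma)$ to be connected so that a $\rho$-net inside it can be chained; the cleanest reason is that both sets are invariant under the dilations $\delta_s$ for $s\in[0,1]$, so the intersection is star-shaped under dilation, hence path-connected---worth stating. The $\rho$-net argument produces chain links living in a slightly fattened tube $\Omega_{r+L_D\rho}$, so you should run the estimate with a hierarchy of radii (path in $\Omega_r$, chain in $\Omega_{2r}$, image under $f_t$ in $\bar B(0,Cr)$), and you should note that $\rho$ can simultaneously be taken small enough for the net argument and at most $\operatorname{diam} E$ as required by the definition---both are free as $t\downarrow 0$ with $r,\sigma$ fixed. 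Finally, ``containing the noncompact center'' is slightly off: what you actually need and have is that $[\mathbb{G},\mathbb{G}]$ is a nontrivial connected subgroup of a simply connected nilpotent Lie group and hence noncompact; the full center need not lie in $\ker D$ nor in $[\mathbb{G},\mathbb{G}]$.
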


Since the Lipschitz dimension of a compact space is bounded from below by its topological dimension (see \cite[Observation 1.4]{David19}), given an integer $n\geq1$, we note that the Lipschitz dimension of the product of $n$ bounded turning Jordan arcs is at least $n$. Furthermore, the Lipschitz dimension of the product of $n$ bounded turning Jordan arcs is bounded from above by $n$ (here we use \rf{T:main} and \cite[Proposition 3.1]{David19}). Therefore, the Lipschitz dimension of the product of $n$ bounded turning Jordan arcs is equal to $n$. Since Lipschitz dimension is invariant under bi-Lipschitz homeomorphisms, we arrive at the following corollary.

\begin{corollary}\label{C:nonembed}
Let $\mathbb{G}$ denote a non-abelian Carnot group. If $K\subset \mathbb{G}$ is a compact subset of positive measure, then $K$ does not admit a bi-Lipschitz embedding into a product of finitely many bounded turning Jordan arcs.
\end{corollary}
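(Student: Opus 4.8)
The plan is to argue by contradiction and reduce the statement to a comparison of Lipschitz dimensions. The three inputs are: \rf{T:main} (bounded turning Jordan arcs have Lipschitz dimension $1$), the behaviour of Lipschitz dimension under finite products and under bi-Lipschitz embeddings, and \rf{T:Carnot} (a compact subset of positive measure of a non-abelian Carnot group has infinite Lipschitz dimension).

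First I would fix an integer $n\geq 1$ and bounded turning Jordan arcs $\Gamma_1,\dots,\Gamma_n$, and set $P=\Gamma_1\times\cdots\times\Gamma_n$, equipped with any of the (pairwise bi-Lipschitz equivalent) product metrics; since Lipschitz dimension is a bi-Lipschitz invariant, the choice is immaterial. I claim the Lipschitz dimension of $P$ is exactly $n$. For the lower bound, each $\Gamma_i$ is compact and homeomorphic to $[0,1]$, so $P$ is compact and homeomorphic to $[0,1]^n$; hence its topological dimension equals $n$, and by \cite[Observation 1.4]{David19} its Lipschitz dimension is at least $n$. For the upper bound, \rf{T:main} gives that each $\Gamma_i$ has Lipschitz dimension $1$, and iterating the product estimate \cite[Proposition 3.1]{David19} shows that the Lipschitz dimension of $P$ is at most the sum of the Lipschitz dimensions of the factors, namely $n$. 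In particular the Lipschitz dimension of $P$ is finite.

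Next I would record that Lipschitz dimension does not increase under bi-Lipschitz embeddings. This is immediate from the definition: the restriction to a subset $X'\subseteq X$ of a Lipschitz light map on $X$ is again Lipschitz light with the same constants, since any $r$-chain lying in a fibre of the restriction is an $r$-chain lying in the corresponding fibre of the original map, so the fibrewise $r$-components can only shrink; hence the Lipschitz dimension of a subspace is at most that of the ambient space, and combined with bi-Lipschitz invariance this yields the asserted monotonicity. Now suppose, for contradiction, that $K$ admits a bi-Lipschitz embedding into a product of finitely many bounded turning Jordan arcs, say into $P=\Gamma_1\times\cdots\times\Gamma_n$. By the previous paragraph together with the computation above, the Lipschitz dimension of $K$ is at most that of $P$, hence at most $n$, and in particular finite. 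On the other hand, $\mathbb{G}$ is a non-abelian Carnot group and $K\subset\mathbb{G}$ is compact of positive measure, so \rf{T:Carnot} forces the Lipschitz dimension of $K$ to be $\infty$. This contradiction shows that no such embedding exists, which is exactly the assertion of the corollary.

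I do not expect a genuine obstacle: once \rf{T:main} and the cited results of David are in hand, the corollary is a short bookkeeping argument. The one point that deserves a moment's care is the upper bound on the Lipschitz dimension of $P$: a product of bounded turning Jordan arcs need not be metrically doubling, so one should confirm that \cite[Proposition 3.1]{David19} applies at this level of generality (it does), rather than only to doubling spaces.
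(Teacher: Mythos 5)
Your argument is correct and follows the paper's reasoning exactly: establish that a product of $n$ bounded turning Jordan arcs has Lipschitz dimension precisely $n$ via \cite[Observation 1.4]{David19} and \rf{T:main} together with \cite[Proposition 3.1]{David19}, then contrast with the infinite Lipschitz dimension from \rf{T:Carnot} and invoke bi-Lipschitz invariance. The only difference is that you spell out the (easy but worth noting) monotonicity of Lipschitz dimension under passage to subspaces, which the paper leaves implicit when it speaks of bi-Lipschitz homeomorphisms rather than embeddings.
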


In connection with the title of this paper, we also point out the following theorem of Meyer. In particular, all the above results pertaining to bounded turning Jordan circles/arcs can be understood as results about weak quasicircles/arcs. 

\begin{theorem}[Theorem 1.1 of \cite{Meyer11}]\label{T:Meyer}
A Jordan circle/arc $\Gamma$ is a weak quasicircle/arc if and only if $\Gamma$ is bounded turning.
\end{theorem}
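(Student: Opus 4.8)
The plan is to establish the two implications separately. Throughout I take ``$\Gamma$ is bounded turning'' to mean that some $C\ge1$ makes any two points $x,y\in\Gamma$ the endpoints of a subarc of $\Gamma$ of diameter at most $C\,d(x,y)$ (for a circle, the subarc of smaller diameter), and ``$\Gamma$ is a weak quasicircle/arc'' to mean that there is a weakly quasisymmetric homeomorphism from the standard circle $S^1$ (resp.\ from $[0,1]$) onto $\Gamma$, where a homeomorphism $f$ is weakly $H$-quasisymmetric when $d(f(u),f(v))\le H\,d(f(u),f(w))$ whenever $|u-v|\le|u-w|$ in the source.

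For the direction \emph{weak quasicircle $\Rightarrow$ bounded turning}, which is routine, I would argue as follows. Given a weakly $H$-quasisymmetric homeomorphism $f\colon S^1\to\Gamma$ and points $x,y\in\Gamma$, put $a=f^{-1}(x)$, $b=f^{-1}(y)$ and let $\alpha$ be the shorter of the two arcs of $S^1$ between $a$ and $b$. Each $w\in\alpha$ satisfies $|a-w|\le|a-b|$ and $|b-w|\le|a-b|$ as chords, so weak quasisymmetry yields $d(x,f(w))\le H\,d(x,y)$ and $d(y,f(w))\le H\,d(x,y)$, whence $\diam f(\alpha)\le 2H\,d(x,y)$ by the triangle inequality. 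Since $f(\alpha)$ is one of the two subarcs of $\Gamma$ joining $x$ and $y$, the smaller-diameter one has diameter $\le 2H\,d(x,y)$, so $\Gamma$ is $2H$-bounded turning. The arc case is identical with intervals in place of circular arcs.

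The direction \emph{bounded turning $\Rightarrow$ weak quasicircle} is the heart of the matter, and I would build the parametrization by hand. First I would pass from $d$ to the \emph{subarc-diameter function} $d^*(x,y):=\diam(\text{smaller-diameter subarc of }\Gamma\text{ between }x,y)$; using bounded turning and the triangle inequality one checks $d\le d^*\le C\,d$ and that $d^*$ is a genuine metric, and since weak quasisymmetry of the target survives bi-Lipschitz changes of metric it suffices to parametrize $(\Gamma,d^*)$, in which every subarc has diameter equal to the distance between its endpoints. Next I would note that the triangle inequality alone lets one split any subarc of diameter $D>0$ into $k\ge 2$ consecutive subarcs of diameters in $[D/4,\,3D/4]$ (take maximal successive pieces of diameter $\le D/2$ and absorb a too-small final piece into its neighbour; a short argument shows a two-piece split automatically has both pieces of diameter exactly $D/2$). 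Iterating produces nested partitions $\mathcal P_0\prec\mathcal P_1\prec\cdots$ whose generation-$n$ subarcs have diameters decaying geometrically, and assigning to the $n$-th generation subarcs consecutive parameter intervals yields in the limit a homeomorphism $f\colon[0,1]\to\Gamma$ (resp.\ $S^1\to\Gamma$). It then remains to see that $f$ is weakly quasisymmetric; unwinding the definition (and using that, on either side of a fixed parameter, the diameter of the swept-out subarc increases with the length of the parameter interval) reduces this to a single uniform statement: two adjacent parameter intervals of equal length map to subarcs of comparable diameter.

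That last statement is the obstacle, and it is precisely why the theorem has content: a bounded turning arc need not be metrically doubling, so the subdivisions above can have unboundedly many pieces per generation, and the nested ``corner'' subarcs that accumulate at a junction point can a priori shrink at incomparable rates on its two sides. To overcome this I would perform the subdivisions \emph{compatibly} — for instance forcing the first and last pieces of every subarc to have diameter exactly half that of their parent, so that the scale-$2^{-n}$ geometry near any junction is governed by the same geometric factor on both sides and the error that would otherwise compound across generations instead telescopes; carrying this bookkeeping simultaneously over all junctions and all levels is the technical core of the proof. Once it is in place, the weak quasisymmetry estimate, with constant depending only on $C$, falls out by comparing an interval of length $\ell$ about a parameter $u$ with the one or two generation-$n$ subarcs it meets, where $2^{-n}\approx\ell$; and the circle case requires only the additional bookkeeping of the two arcs of $S^1$ between preimages, exactly as in the first implication.
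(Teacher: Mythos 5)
The paper does not prove this theorem; it is quoted verbatim as Theorem~1.1 of Meyer's paper and used as a black box, so there is no internal argument to compare your attempt against. Evaluating your proposal on its own merits: the easy implication (weak quasicircle $\Rightarrow$ bounded turning) is correct and essentially the standard argument.

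The hard implication has a genuine gap, and you have in effect flagged it yourself. Two concrete problems. First, the claim that ``a two-piece split automatically has both pieces of diameter exactly $D/2$'' is unjustified and appears to be false: in the subarc-diameter metric $d^*$, if $d^*(x,p)=D/2$ on an arc with $d^*(x,y)=D$, the triangle inequality only gives $D/2\le d^*(p,y)\le D$, and nothing forces $d^*(p,y)=D/2$. Second, and more seriously, the passage from ``perform the subdivisions compatibly, forcing first and last pieces to have diameter exactly half that of their parent'' to the uniform comparability of adjacent same-length parameter intervals is the entire content of the theorem, and your own text concedes that ``carrying this bookkeeping simultaneously over all junctions and all levels is the technical core of the proof.'' As written, the key estimate --- that two adjacent parameter intervals of equal length map to subarcs of comparable diameter, uniformly over position and scale, with constant depending only on $C$ --- is asserted rather than derived, and the proposed compatibility condition is not shown to be simultaneously achievable across all generations, nor is it shown how it would propagate a comparison across a junction where arbitrarily many siblings can accumulate (which is precisely where non-doubling bites). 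Until that construction and estimate are actually carried out, this direction is a plan, not a proof.
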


Finally, we highlight the following result of Herron and Meyer, as it is foundational to our proof of \rf{T:main}.

\begin{theorem}[\cite{HM12}]
If $\Gamma$ is a bounded turning Jordan circle, then $\Gamma$ is bi-Lipschitz homeomorphic to some Jordan circle in $\mathcal{S}_1$. 
\end{theorem}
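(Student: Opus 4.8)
The plan is to realize $\Gamma$, up to bi-Lipschitz equivalence, as the standard circle $\mathbb{S}^1$ equipped with its \emph{minimal arc-diameter metric}---the normalized form that characterizes membership in $\mathcal{S}_1$. Fix a homeomorphism $\gamma\colon\mathbb{S}^1\to\Gamma$ (one exists since $\Gamma$ is a Jordan circle), and let $K\ge1$ be a bounded turning constant for $(\Gamma,d)$. For a subarc $B\subseteq\mathbb{S}^1$ write $\Delta(B):=\diam_d\gamma(B)$; this is positive for nondegenerate $B$ because $\gamma$ is injective. Given distinct $x,y\in\mathbb{S}^1$, let $A,A'$ be the two arcs of $\mathbb{S}^1$ with endpoints $x,y$, and define $\rho(x,y):=\min\{\Delta(A),\Delta(A')\}$, together with $\rho(x,x)=0$.

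First I would check that $\rho$ is an honest metric. Symmetry and positive-definiteness are immediate, so the point is the triangle inequality. Given three distinct points in cyclic order, decompose $\mathbb{S}^1=A_1\cup A_2\cup A_3$ into the three arcs they determine; then the two arcs joining any two of the points are one of the $A_i$ and the union of the other two, so one simply expands $\rho$ at the three pairs. Every case then follows from two elementary facts: $\Delta(A_i)\le\Delta(A_i\cup A_j)$ (monotonicity of diameter under inclusion) and $\Delta(A_i\cup A_j)\le\Delta(A_i)+\Delta(A_j)$ (the diameter of a union of two sets with a common point is at most the sum of the diameters). Tracking which of the two terms realizes each minimum leaves only a handful of possibilities, all of which close the inequality.

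Next I would observe that $\gamma$ is $K$-bi-Lipschitz from $(\mathbb{S}^1,\rho)$ onto $(\Gamma,d)$: since $\gamma(x)$ and $\gamma(y)$ both lie in $\gamma(A)$ and in $\gamma(A')$ we get $d(\gamma x,\gamma y)\le\rho(x,y)$, and since $\gamma(A),\gamma(A')$ are exactly the two arcs of $\Gamma$ joining $\gamma(x)$ and $\gamma(y)$, the bounded turning hypothesis gives $\rho(x,y)\le K\,d(\gamma x,\gamma y)$. In particular $\rho$ induces the usual topology on $\mathbb{S}^1$, so $(\mathbb{S}^1,\rho)$ is itself a Jordan circle. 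Finally I would verify that $(\mathbb{S}^1,\rho)\in\mathcal{S}_1$, i.e.\ that $\rho$ is \emph{its own} minimal arc-diameter metric: for nondegenerate $A$ one always has $\diam_\rho(A)\le\Delta(A)$, with equality when $A$ is the arc attaining $\min\{\Delta(A),\Delta(A')\}$, since then its endpoints $x,y$ already satisfy $\rho(x,y)=\Delta(A)$; hence for every $x,y$ the smaller of $\diam_\rho(A),\diam_\rho(A')$ equals $\min\{\Delta(A),\Delta(A')\}=\rho(x,y)$, which is the defining property of $\mathcal{S}_1$.

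The only step that carries real content is the triangle inequality; everything else is a direct unwinding of the bounded turning inequality and of the definition of $\mathcal{S}_1$. If the official definition of $\mathcal{S}_1$ demands a further normalization---for instance some scale-invariant distribution of arc diameters---one would precompose $\gamma$ with a homeomorphism of $\mathbb{S}^1$ built from a $\Delta$-driven dyadic subdivision, which requires extra bookkeeping to keep every comparison constant dependent only on $K$. But the reduction above already yields a bi-Lipschitz copy of $\Gamma$ whose metric is intrinsically the minimal arc diameter, and this is precisely the structural feature that later makes a Lipschitz light map $\Gamma\to\mfR$ tractable.
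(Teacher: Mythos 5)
The paper does not give its own proof of this statement; it is quoted directly from Herron--Meyer \cite{HM12}, so there is no internal argument to compare against. Evaluating your proposal on its own merits, there is a genuine gap, and it lies exactly where you wave it off at the end.

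Your construction of $\rho(x,y):=\min\{\Delta(A),\Delta(A')\}$, the verification of the triangle inequality, and the $K$-bi-Lipschitz equivalence of $(\mathbb{S}^1,\rho)$ with $(\Gamma,d)$ are all fine. But the conclusion ``$(\mathbb{S}^1,\rho)\in\mathcal{S}_1$'' does not follow, because you have mischaracterized $\mathcal{S}_1$. Membership in $\mathcal{S}_1$ is not the fixed-point property that a metric equals its own minimal arc-diameter metric. As used in this paper and in \cite{HM12}, $\mathcal{S}_1$ is the family of metric circles $(\mathsf{S},d_\Delta)$ where $d_\Delta$ is built by chain-summing a \emph{dyadic diameter function} $\Delta:\hat{\mathcal{I}}\to(0,\infty)$ satisfying rigid combinatorial constraints tied to the dyadic tree and the snowflake parameter $\sigma=1$. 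The paper's definition of the folding maps $f_n$ makes this structure explicit: for every dyadic $I$ the two children $I',I''$ must satisfy either $\Delta(I')=\Delta(I'')=\tfrac12\Delta(I)$ or $\Delta(I')=\Delta(I'')=\Delta(I)$. Your metric $\rho$ carries no dyadic structure at all, and for a generic bounded turning circle the $\rho$-diameters of dyadic subarcs under any fixed parametrization will not obey this dichotomy, so $(\mathbb{S}^1,\rho)$ is not, as constructed, a member of $\mathcal{S}_1$.

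The last paragraph of your proposal, where you say one would ``precompose $\gamma$ with a homeomorphism of $\mathbb{S}^1$ built from a $\Delta$-driven dyadic subdivision'' with ``extra bookkeeping,'' is not a minor normalization step; it is the entire theorem. The content of \cite{HM12} is precisely: (i) choosing a nested dyadic decomposition of $\Gamma$ by repeatedly bisecting arcs at near-diametral points, (ii) reading off a candidate $\Delta$ from the diameters, (iii) \emph{modifying} $\Delta$ up and down the tree so that it satisfies the $\sigma=1$ conditions (equal-children, half-or-equal), and (iv) proving that these modifications change $d_\Delta$ only by a factor controlled by the bounded turning constant $K$. None of this is routine, and none of it appears in your argument. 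What you have actually proved is that every bounded turning Jordan circle is bi-Lipschitz to a circle carrying its own minimal arc-diameter metric, which is a true and useful reduction but is a strictly weaker statement than the Herron--Meyer theorem being cited.
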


Here $\mathcal{S}_1$ is the collection of all Jordan circles given by dyadic diameter functions constructed using the snowflake parameter $\sigma=1$ (see \cite{HM12} for definitions). This result allows us to distort any given bounded turning Jordan circle into a form more amenable to the construction of a Lipschitz light map into $\mathsf{R}$.

The organization of this paper is as follows. In \rf{S:defs} we provide a few key definitions. Then, in \rf{S:prelims}, we investigate various aspects of Jordan circles in $\mathcal{S}_1$. In \rf{S:lip}, we construct a $1$-Lipschitz mapping from any Jordan circle $\Gamma\in\mathcal{S}_1$ into the unit circle. Finally, in \rf{S:light} we prove that this mapping is Lipschitz light via a series of technical lemmas.

\begin{acknowledge}
The author thanks Guy C. David for enlightening dialogue regarding the results and theoretical context of both this paper and \cite{David19}.
\end{acknowledge}

\section{Basic Definitions}\label{S:defs}

We write $\mathsf{N}$ to denote the set $\{0,1,2,\dots\}$ consisting of non-negative integers, and $\mathsf{R}$ to denote the Euclidean line.

Given two metric spaces $X$ and $Y$ and a constant $L\geq 1$, we say that an embedding $f:X\to Y$ is \textit{$L$-Lipschitz} provided that, for all points $x,y\in X$, we have $d(f(x),f(y))\leq L\,d(x,y)$. Furthermore, an embedding is \textit{$L$-bi-Lipschitz} if it is also true that $d(x,y)\leq L\,d(f(x),f(y))$.

An embedding $f:X\to Y$ is \textit{quasisymmetric} provided there exists a homeomorphism $\eta:[0,\infty)\to[0,\infty)$ such that, for all points $x,y,z\in X$ and $t\in[0,\infty)$,
\[d(x,y)\leq t\,d(x,z)\quad\text{implies}\quad d(f(x),f(y))\leq \eta(t)d(f(x),f(z)).\]
An embedding $f:X\to Y$ is \textit{weakly quasisymmetric} provided there exists a constant $H\geq 1$ such that, for all $x,y,z\in X$,
\[d(x,y)\leq d(x,z)\quad \text{implies}\quad d(f(x),f(y))\leq H d(f(x),f(z)).\]
While all quasisymmetries are weak quasisymmetries (with $H:=\eta(1)$), in general, a weak quasisymmetry need not be a quasisymmetry. We refer to the weak quasisymmetric image of the unit circle as a \textit{weak quasicircle}, and such an image of the closed unit interval as a \textit{weak quasiarc}. Thus every quasicircle/arc is a weak quasicircle/arc. Conversely, by \cite[Theorem 4.9]{TV80}, every weak quasicircle/arc that is \textit{metrically doubling} is a quasicircle/arc. Here we say that a space $X$ is metrically doubling if there exists $D\geq 1$ such that that any open ball of radius $2r>0$ can be covered by $D$ open metric balls of radius $r$. For additional information about weak quasicircles and quasiarcs, we refer the reader to \cite{Meyer11} and references therein.

A \textit{Jordan circle} $\Gamma$ is a homeomorphic image of the unit circle. Given two points $x,y\in \Gamma$, we write $\Gamma(x,y)$ to denote a component of $\Gamma\setminus\{x,y\}$ of minimal diameter. We write $\Gamma[x,y]$ to denote the topological closure of $\Gamma(x,y)$; thus $\Gamma[x,y]=\Gamma(x,y)\cup\{x,y\}$. Analogously, a \textit{Jordan arc} $\Gamma$ is a homeomorphic image of the closed unit interval. In this setting, given two points $x,y\in\Gamma$, we write $\Gamma(x,y)$ to denote the connected component of $\Gamma\setminus\{x,y\}$. Again, $\Gamma[x,y]=\Gamma(x,y)\cup\{x,y\}$. 

A Jordan circle or arc is said to be  \textit{bounded turning} provided that there exists a constant $C\geq 1$ such that, for all pairs of points $x,y\in\Gamma$, we have $\diam(\Gamma[x,y])\leq C\,d(x,y)$. In this case we say that $\Gamma$ is $C$-bounded turning. This property is at times referred to in the literature as \textit{linear connectivity}. 

Given a metric space $(X,d)$ and $\delta>0$, a \textit{$\delta$-sequence} in $X$ is a finite sequence of points $\{x_i\}_{i=0}^n$ such that, for each $0\leq i\leq n-1$, we have $d(x_i,x_{i+1})\leq \delta$. A subset $U\subset X$ is said to be $\delta$-connected if every pair of points in $U$ is contained in a $\delta$-sequence in $U$. A \textit{$\delta$-component} of $X$ is a maximal $\delta$-connected subset of $X$.

We say that a map $F:X\to Y$ is \textit{Lipschitz light} provided there exists $C\geq 1$ such that $F$ is $C$-Lipschitz, and, for every $r>0$ and every subset $E\subset Y$ with $\diam(E)\leq r$, the $r$-components of $F^{-1}(E)$ have diameter bounded above by $C\,r$. Here we employ the definition of Lipschitz light used in \cite[Definition 1.2]{David19}. As shown by David in \cite[Section 1.4]{David19}, this definitions is equivalent to \cite[Definition 1.14]{CK13} for maps into $\mathsf{R}^n$ (for $n\geq1$).

A metric space $X$ has \textit{Lipschitz dimension} $n\in\mathsf{N}$ if $n$ is minimal such that there exists a Lipschitz light map $F:X\to\mathsf{R}^n$.

\section{Preliminary Results}\label{S:prelims}

Following \cite{HM12}, we view the unit circle $\mathsf{S}$ as $[0,1]/\{0,1\}$, the closed unit interval whose endpoints are identified. We equip $\mathsf{S}$ with the arc-length metric $\lambda$. That is, for two points $s,t\in\mathsf{S}$ such that $0\leq s\leq t\leq 1$, we have
\[\lambda(s,t):=\min\{t-s,1-(t-s)\}.\]
The space $\mathsf{S}$ inherits the usual left-to-right orientation on $[0,1]$. 

Given $n\in\mathsf{N}$, we write $\mathcal{I}_n$ to denote the collection of $2^n$ closed dyadic intervals in $[0,1]$, each of length $2^{-n}$. We write $\hI_n$ to denote the collection $\cup_{m=0}^n\mathcal{I}_m$. Furthermore, we write $\hat{\mathcal{I}}$ to denote the collection $\cup_{n=0}^\infty\mathcal{I}_n$. Given an interval $I\in\hat{\mathcal{I}}$, we write $l(I)$ to denote the unique index $n\in\mathsf{N}$ such that $I\in \mathcal{I}_n$. For convenience, we use the language of a dyadic tree to describe intervals in $\hat{\mathcal{I}}$. In particular, given any $I\in\hat{\mathcal{I}}$, there are exactly two dyadic \textit{children} contained in $I$, and $I$ is contained in its unique dyadic \textit{parent} interval.

Similarly, we write $\mathcal{D}_n$ to denote the collection of $2^{n}$ dyadic endpoints of intervals in $\mathcal{I}_n$. For example, $\mathcal{D}_0=\{0\}=\{1\}$, $\mathcal{D}_1=\{0,1/2\}=\{1,1/2\}$, etc. Note that, for each $n\in\mathsf{N}$, we have $\mathcal{D}_n\subset\mathcal{D}_{n+1}$. We write $\mathcal{D}$ to denote $\cup_{n=0}^\infty\mathcal{D}_n$.

In order to utilize the catalogue $\mathcal{S}_1$, we rely upon notation and terminology from \cite{HM12}. Given a dyadic diameter function $\Delta$, the distance $d_\Delta$ on $\mathsf{S}$ is defined as 
\[d_\Delta(x,y):=\inf\sum_{k=1}^N\Delta(J_k),\]
where the infimum is taken over all chains $J_1,\dots, J_N$ of intervals from $\hat{\mathcal{I}}$ joining $x$ to $y$. That is, $\{x,y\}$ is contained in the connected set  $J_1\cup\dots\cup J_N$. For each $n\in\mathsf{N}$, we define a distance $d_n$ on $\mathsf{S}$ using the \textit{truncated} diameter function $\Delta_n$, defined as follows. For $m\leq n$ and $I\in\mathcal{I}_m$, we define $\Delta_n(I):=\Delta(I)$. For every $m> n$ and $I\in \mathcal{I}_m$, we inductively define $\Delta_n(I)=\frac{1}{2}\Delta_n(\tilde{I})$, where $\tilde{I}\in\mathcal{I}_{m-1}$ denotes the dyadic parent of $I$. In analogy with $d_\Delta$, we then define
\[d_n(x,y):=\inf\sum_{k=1}^N\Delta_n(J_k),\]
where the infimum is taken over all chains $\{J_k\}_{k=1}^N$ joining $x$ to $y$. We write $\Gamma_n$ to denote the metric space $(\mathsf{S},d_n)$, and $\Gamma$ to denote $(\mathsf{S},d_\Delta)$. For $n\in\mathsf{N}$, we write $\diam_n(E)$ to denote the $d_n$-diameter of a set $E\subset\mathsf{S}$. Furthermore, we write $\diam_\Delta(E)$ to denote the $d_\Delta$-diameter of $E$.


We say that a chain of dyadic intervals $\{I_i\}_{i=1}^N$ is \textit{minimal} provided that it consists of intervals with pairwise disjoint interiors and that no union of at least two distinct intervals from the chain forms an interval in $\hat{\mathcal{I}}$. In particular, if the union of intervals $\cup_{k=1}^MI_{i_k}$ from a minimal chain $\{I_i\}_{i=1}^N$ is equal to some interval $J\in\hat{\mathcal{I}}$, then $M=1$ and $J=I_{i_1}\in\{I_i\}_{i=1}^N$.

\begin{lemma}\label{L:minimal}
The definitions of $d_n$ and $d_\Delta$ are unchanged by the assumption that the chains of dyadic intervals utilized in these definitions are minimal.
\end{lemma}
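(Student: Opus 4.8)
The plan is as follows. Since every minimal chain joining $x$ to $y$ is in particular a chain joining $x$ to $y$, the infimum over minimal chains automatically dominates the infimum over all chains, so only the reverse inequality needs proof. I would establish it by showing that any chain joining $x$ to $y$ can be converted into a minimal one joining $x$ to $y$ without increasing its cost $\sum_k\Delta(J_k)$ (and likewise with $\Delta_n$ in place of $\Delta$).

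Before describing the conversion I would record two facts. First, any two members of $\hat{\mathcal{I}}$ either have disjoint interiors or are nested, since $\hat{\mathcal{I}}$ is the usual nested dyadic family. Second, if a dyadic interval $J$ is subdivided into finitely many dyadic subintervals $I_1,\dots,I_M$ with pairwise disjoint interiors and $\bigcup_k I_k=J$, then $\Delta(J)\le\sum_{k=1}^M\Delta(I_k)$, and the same holds for each $\Delta_n$. The second fact follows by iterating the single-step subadditivity $\Delta(I)\le\Delta(I_0)+\Delta(I_1)$ over the two children $I_0,I_1$ of $I$, a basic property of dyadic diameter functions (see \cite{HM12}); for the truncation $\Delta_n$ one reads the same inequality off the recursion $\Delta_n(I)=\tfrac12\Delta_n(\tilde I)$, which yields equality as soon as $l(I)>n$ and reduces to the property of $\Delta$ at lower levels.

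The conversion itself is a finite reduction. Starting from a chain $\mathcal C=\{J_1,\dots,J_N\}$ whose union is connected and contains $\{x,y\}$, I would repeatedly apply whichever of the following moves is possible: \textbf{(a)} if two members of $\mathcal C$ do not have disjoint interiors, they are nested (or equal) by the first fact, and I delete the smaller one; \textbf{(b)} if some subfamily of at least two distinct members of $\mathcal C$ has union equal to an interval $J\in\hat{\mathcal{I}}$, I replace that subfamily by the single interval $J$. Each move leaves $\bigcup\mathcal C$ unchanged, hence keeps $\mathcal C$ a chain joining $x$ to $y$; by positivity of $\Delta$ (resp. $\Delta_n$) move (a) strictly lowers the cost, and by the second fact move (b) does not raise it. Each move strictly decreases the number of intervals, so the process stops after finitely many steps, and a family admitting neither move is exactly a minimal chain. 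Passing to the infimum over $\mathcal C$ gives the reverse inequality, and the lemma follows for both $d_\Delta$ and $d_n$.

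I expect the only real care to be in the bookkeeping of the reduction: verifying that ``neither move (a) nor move (b) applies'' is literally the definition of minimality, that moves may be interleaved freely because each strictly decreases $N$ (so termination is automatic and no cleverness about the order is needed), and that the subadditivity property and its behavior under truncation are exactly as claimed. None of this requires an estimate beyond the triangle-type inequality in the second fact, so I would regard this as the main --- but essentially routine --- obstacle.
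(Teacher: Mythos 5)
Your proposal is correct and follows essentially the same line of reasoning as the paper's proof: the paper likewise reduces an arbitrary chain to a minimal one by (i) discarding the inner interval of any nested pair, which cannot increase the sum because $\Delta_n$ is positive, and (ii) replacing any subfamily whose union is a single dyadic interval $J$ by $J$ itself, using $\Delta_n(J)\le\sum_k\Delta_n(I_{i_k})$, with finiteness of $N$ guaranteeing termination. Your explicit framing via two "moves," each strictly decreasing the number of intervals, is a minor formalization of the paper's argument rather than a different route.
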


\begin{proof}
We focus on the definition of $d_n$. The proof for $d_\Delta$ is the same. Suppose that $\{I_i\}_{i=1}^N$ is a chain of dyadic intervals joining $x$ and $y$ in $\mathsf{S}$. Suppose $I_j$ and $I_k$ have non-disjoint interiors. Since both $I_j$ and $I_k$ are dyadic, one must be a subset of the other. Without loss of generality, $I_j\subset I_k$. Therefore, the sum $\sum_{i=1}^N\Delta_n(I_i)$ can be decreased by eliminating the interval $I_j$ from $\{I_i\}_{i=1}^N$. It follows that $d_n$ can be defined only using chains consisting of intervals with pairwise disjoint interiors. 

Next, suppose there exists $M\geq2$ and a subcollection $\{I_{i_k}\}_{k=1}^M\subset \{I_i\}_{i=1}^N$ such that $J=\cup_{k=1}^MI_{i_k}\in\hat{\mathcal{I}}$. Since $\Delta_n(J)\leq\sum_{k=1}^M\Delta_n(I_{i_k})$, the sum $\sum_{i=1}^N\Delta_n(I_i)$ can be decreased by replacing the intervals $\{I_{i_k}\}_{k=1}^M$ in $\{I_i\}_{i=1}^N$ with the single interval $J$. Since $N<+\infty$, such a replacement can happen at most finitely many times. It follows that $d_n$ can be defined only using minimal chains.
\end{proof}

For use below, we record the following technical lemma.

\begin{lemma}\label{L:minimalchain}
Assume $\{I_i\}_{i=1}^N$ is a minimal chain of dyadic intervals indexed such that, for $1\leq i\leq N-1$, the right endpoint of $I_i$ is the left endpoint of $I_{i+1}$. Under this assumption, there is either a unique interval or a unique pair of adjacent intervals from $\{I_i\}_{i=1}^N$ of maximal $d_0$-diameter. Write $i_*$ to denote the index of such a maximal interval. If $i_*>1$, then  $l(I_i)$ is strictly decreasing for $i=1,\dots,i_*-1$. If $i_*<N$, then $l(I_i)$ is strictly increasing for $i=i_*+1,\dots,N$.
\end{lemma}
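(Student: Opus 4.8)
The plan is to reduce the statement to a purely combinatorial fact about the levels $l(I_i)$. Recall that $\diam_0(I)$ depends only on $l(I)$ and is strictly decreasing in it (indeed $d_0$ is a constant rescaling of the arc-length metric $\lambda$, so $\diam_0(I)=2^{-l(I)}\Delta([0,1])$ for every $I\in\hat{\mathcal I}$); hence the intervals of maximal $d_0$-diameter in the chain are exactly those of minimal level, and the lemma becomes the assertion that $i\mapsto l(I_i)$ attains its minimum on a single index or a single adjacent pair, being strictly decreasing before it and strictly increasing after. We may assume $N\geq 2$, the case $N=1$ being trivial; then minimality forbids $\mathsf S=[0,1]$ from appearing in the chain, since its union would otherwise equal the dyadic interval $[0,1]\in\hat{\mathcal I}$. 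Consequently every $I_i$ has level $\geq 1$ and the chain lies along a proper sub-arc $p_0<p_1<\dots<p_N$ of $\mathsf S$, with $p_{i-1},p_i$ the left and right endpoints of $I_i$. For $I\in\hat{\mathcal I}$ of level $m\geq 1$, exactly one endpoint of $I$ lies in $\mathcal D_m\setminus\mathcal D_{m-1}$ (the other then lying in $\mathcal D_{m-1}$) — call it the \emph{deep endpoint} of $I$ — and say $I_i$ has type $\mathsf L$ or type $\mathsf R$ according as its deep endpoint is $p_{i-1}$ or $p_i$.

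The key point is the following consequence of minimality: for $1\leq i\leq N-1$, the shared point $p_i$ cannot be the deep endpoint of both $I_i$ and $I_{i+1}$. Indeed, if it were, then $l(I_i)=l(I_{i+1})=:m$ and $p_i$ has the form $a/2^m$ with $a$ odd, forcing $I_i$ and $I_{i+1}$ to be the two level-$m$ intervals abutting $p_i$; but then $I_i\cup I_{i+1}=[(a-1)/2^m,(a+1)/2^m]\in\mathcal I_{m-1}\subset\hat{\mathcal I}$, a union of two distinct chain intervals forming a dyadic interval, contradicting minimality. From this I deduce a propagation rule: if $I_i$ has type $\mathsf R$, its deep endpoint is $p_i$, so $p_i\in\mathcal D_{l(I_i)}\setminus\mathcal D_{l(I_i)-1}$; by the key point $p_i$ is not the deep endpoint of $I_{i+1}$, hence $p_i\in\mathcal D_{l(I_{i+1})-1}$, giving $l(I_i)<l(I_{i+1})$ and forcing $p_{i+1}$ to be the deep endpoint of $I_{i+1}$, i.e.\ $I_{i+1}$ also has type $\mathsf R$. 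Symmetrically, if $I_{i+1}$ has type $\mathsf L$ then so does $I_i$, with $l(I_i)>l(I_{i+1})$.

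It follows that the indices of type-$\mathsf R$ intervals form an up-set and those of type $\mathsf L$ a down-set, so there is $0\leq j\leq N$ with $I_1,\dots,I_j$ of type $\mathsf L$ and $I_{j+1},\dots,I_N$ of type $\mathsf R$, and, by the inequalities above, $l(I_1)>\dots>l(I_j)$ and $l(I_{j+1})<\dots<l(I_N)$. Hence the minimal level is attained only at $I_j$ and/or $I_{j+1}$: if $l(I_j)=l(I_{j+1})$ these form the unique adjacent maximal pair and we may take $i_*\in\{j,j+1\}$; otherwise the minimum is attained at the single interval $I_j$ or $I_{j+1}$, which we take as $I_{i_*}$ (when $j=0$ or $j=N$ one run is empty and $i_*=1$ or $i_*=N$). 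In every case the indices less than $i_*$ lie in the strictly level-decreasing $\mathsf L$-run and those greater than $i_*$ in the strictly level-increasing $\mathsf R$-run, which is exactly the assertion. The one genuinely non-bookkeeping step is the key point, i.e.\ extracting from minimality the arithmetic fact that two distinct dyadic intervals sharing a deep endpoint must be the two children of a common parent; I should also check that the boundary/wrap-around behavior on $\mathsf S$ causes no trouble, but the only problematic configuration — a level-$0$ interval, equivalently the chain wrapping around all of $\mathsf S$ — is already excluded by minimality once $N\geq 2$.
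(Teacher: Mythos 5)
Your argument is correct, and it takes a genuinely different and in fact cleaner route than the paper's. The paper proves the two assertions separately and somewhat ad hoc: for the adjacency of the maximal pair it observes that three consecutive same-level intervals would contain a coarser dyadic interval, which minimality forces into the chain; for the monotonicity it runs a two-step case analysis on whether the neighbor $J\in\mathcal I_m$ to the right of $I_i$ equals $I_{i+1}$ or is strictly contained in it, deriving a contradiction in the former case via the interval $K$. Your proof instead isolates the single structural fact doing all the work — that minimality forbids a shared vertex $p_i$ from being the ``deep'' endpoint (the one in $\mathcal D_m\setminus\mathcal D_{m-1}$) of both $I_i$ and $I_{i+1}$ — and then reads off both conclusions at once from the resulting $\mathsf L$/$\mathsf R$ dichotomy and the forced strict level inequality across each $\mathsf L\to\mathsf L$ or $\mathsf R\to\mathsf R$ step. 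This unifies the uniqueness of the maximal pair and the strict monotonicity of $l(I_i)$ on either side into one propagation argument, which I find more transparent than the paper's.

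One small caveat worth tightening: the chain may pass through the identified point $0=1$ of $\mathsf S$ without containing a level-$0$ interval (e.g.\ $[3/4,1],[0,1/4]$), so the assertion that the chain ``lies along a proper sub-arc $p_0<\dots<p_N$'' is not literally true in the usual linear order. Fortunately your argument never uses that linear order; it is entirely local to each shared vertex $p_i$, and the point $0=1$ lies in $\mathcal D_0$ and hence can never be a deep endpoint, so the $\mathsf L$/$\mathsf R$ propagation goes through unchanged. You might simply replace the ``proper sub-arc'' remark with the observation that each $p_i$ is a well-defined point of $\mathcal D$, exactly one endpoint of each $I_i$ is deep (here you use $l(I_i)\geq 1$, which does need the exclusion of $\mathsf S$ itself), and the key point holds verbatim.
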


\begin{proof}
We may assume that $\sigma:=\bigcup_{i=1}^NI_i\not=\mathsf{S}$, else $N=1$ and $I_1=\mathsf{S}$. 

Suppose there are two distinct intervals $I_j$ and $I_k$ in $\{I_i\}_{i=1}^N$ of maximal $d_0$-diameter, where $j<k$ and $n:=l(I_j)=l(I_k)$. If these intervals are not adjacent, then the chain $\{I_{i}\}_{i=j+1}^{k-1}$ consisting of intervals from $\{I_i\}_{i=1}^N$ joins the right endpoint of $I_j$ to the left endpoint of $I_k$. Since $I_j$ and $I_k$ are not adjacent, the union $\cup_{i=j}^k I_i\subset \sigma$ contains at least three consecutive intervals from $\mathcal{I}_n$. Such a union must contain some interval $J$ from $\mathcal{I}_{n-1}$. It follows from the assumption that $\{I_i\}_{i=1}^N$ is minimal that the interval $J$ must be an element of $\{I_i\}_{i=1}^N$. However, this violates the assumption that $I_j$ and $I_k$ are of maximal $d_0$-diameter in $\mathcal{I}_n$. Therefore, the intervals $I_j$ and $I_k$ must be adjacent. 

To verify the second part of the lemma, suppose $i_*>1$. If $i_*=2$ then the desired conclusion is trivial, so we may assume that $i_*\geq3$. Let $1\leq i\leq i_*-2$, and write $m:=l(I_i)$. Since $I_i$ lies to the left of $I_{i_*}$ in $\sigma$ and $\diam_0(I_{i_*})$ is strictly larger than $\diam_0(I_i)$, the interval $J$ of $\mathcal{I}_m$ immediately to the right of $I_i$ is contained in $\sigma$. By the minimality of $\{I_i\}_{i=1}^N$, no union of at least two intervals from $\{I_i\}_{i=1}^N$ is equal to $J$. Therefore, either $J=I_{i+1}$ or $J$ is strictly contained in $I_{i+1}$. If $J=I_{i+1}$, then (lest we violate the minimality of $\{I_i\}_{i=1}^N$), the midpoint of $I_i\cup I_{i+1}$ is an element of $\mathcal{D}_{m-1}$, and so $I_i\cup I_{i+1}\not\in\mathcal{I}_m$. In this case, since $I_{i+1}$ lies to the left of $I_{i_*}$ and $\diam_0(I_{i_*})>\diam_0(I_{i+1})$, the interior of the interval $K$ in $\mathcal{I}_m$ immediately to the right of $I_{i+1}$ is disjoint from the interior of $I_{i_*}$. It follows from the minimality of $\{I_i\}_{i=1}^N$ that $K$ cannot be strictly contained in any element of $\{I_i\}_{i=1}^N$ (since any dyadic interval strictly containing $K$ must also contain $I_{i+1}$), and so (again using minimality), we conclude that $K=I_{i+2}$. But this nevertheless violates the minimality of $\{I_i\}_{i=1}^N$, since $I_{i+1}\cup I_{i+2}\in\mathcal{I}_{m-1}$. Therefore, $J$ is strictly contained in $I_{i+1}$, and so $l(I_{i+1})<l(I_i)$.

An analogous argument verifies the final assertion of the lemma.
\end{proof}

Since, for any $I\in\mathcal{I}$, we have $\Delta_n(I)\leq\Delta(I)$, it follows that, for any $x,y\in\mathsf{S}$,
\begin{equation}\label{E:leqn}
d_n(x,y)\leq d_\Delta(x,y).
\end{equation}
Therefore, for any set $E\subset \mathsf{S}$, we have $\diam_n(E)\leq \diam_\Delta(E)$. Furthermore, for any $n\in\mathsf{N}$ and $I\in\hI_n$ with endpoints $a,b$, via \cite[Lemma 3.1]{HM12}, we have
\begin{equation}\label{E:intn}
d_n(a,b)=\diam_{n}(I)=\Delta_n(I)=\Delta(I)=\diam_\Delta(I)=d_\Delta(a,b).
\end{equation}

\begin{lemma}\label{L:distconverge}
If $x,y\in \mathsf{S}$ and $n\in\mathsf{N}$, then 
\[d_\Delta(x,y)\leq d_n(x,y)+2\max\{\Delta(I)\,|\,I\in\mathcal{I}_n\}.\]
In particular, $d_n(x,y)\to d_\Delta(x,y)$.
\end{lemma}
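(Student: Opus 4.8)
The plan is to compare a near-optimal minimal chain for $d_n$ with the true $d_\Delta$ distance by replacing each interval in the chain with the same interval, but estimating the discrepancy $\Delta(I) - \Delta_n(I)$ for intervals $I$ at levels $m > n$. First I would fix $x,y \in \mathsf{S}$ and, given $\ep > 0$, invoke \rf{L:minimal} to pick a minimal chain $\{J_k\}_{k=1}^N$ joining $x$ to $y$ with $\sum_{k=1}^N \Delta_n(J_k) \le d_n(x,y) + \ep$. The same chain joins $x$ to $y$ in the definition of $d_\Delta$, so $d_\Delta(x,y) \le \sum_{k=1}^N \Delta(J_k)$. Thus it suffices to bound $\sum_{k=1}^N \bigl(\Delta(J_k) - \Delta_n(J_k)\bigr)$ by $2\max\{\Delta(I) : I \in \mathcal{I}_n\}$, since for intervals with $l(J_k) \le n$ the two diameter functions agree and contribute nothing.

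The key structural input is \rf{L:minimalchain}: after orienting the chain left-to-right, the levels $l(J_k)$ strictly decrease up to the maximal interval (or adjacent maximal pair) $J_{i_*}$ and then strictly increase afterward. Consequently, on the initial decreasing run, at most one interval lies in each level $m > n$, and likewise at most one on the final increasing run; so for each level $m > n$ there are at most two intervals $J_k$ in the chain with $l(J_k) = m$. For such an interval at level $m$, since $\Delta_n$ halves at each level below $n$, we have $\Delta_n(J_k) = 2^{-(m-n)}\Delta_n(\widetilde{J})$ where $\widetilde{J} \in \mathcal{I}_n$ is the level-$n$ ancestor, i.e. $\Delta_n(J_k) \ge 0$ and more to the point $\Delta(J_k) \le \diam_\Delta(\widetilde J) = \Delta(\widetilde J) \le \max\{\Delta(I): I\in\mathcal I_n\}$ — actually I need $\Delta(J_k) \le 2^{-(m-n)}\max\{\Delta(I):I\in\mathcal I_n\}$, which follows because $J_k \subset \widetilde J$ forces $\diam_\Delta(J_k) \le \diam_\Delta(\widetilde J)$, and one can do better using the dyadic subdivision so that summing a geometric series over the at-most-two intervals per level $m > n$ gives $\sum_{m>n} 2 \cdot 2^{-(m-n)} \max\{\Delta(I):I\in\mathcal I_n\} = 2\max\{\Delta(I):I\in\mathcal I_n\}$. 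Letting $\ep \to 0$ yields the inequality; the "in particular" clause then follows since $\max\{\Delta(I) : I \in \mathcal{I}_n\} \to 0$ as $n \to \infty$ (the diameters of level-$n$ dyadic intervals shrink to $0$ because $\Delta$ is a dyadic diameter function), combined with \rf{E:leqn} which gives the reverse inequality $d_n(x,y) \le d_\Delta(x,y)$.

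The main obstacle I anticipate is making the per-level bound $\diam_\Delta(J_k) \le 2^{-(m-n)}\max\{\Delta(I):I\in\mathcal I_n\}$ fully rigorous: one must control $\Delta(J_k)$ for a deep interval $J_k$ at level $m$ in terms of its level-$n$ ancestor, using only the axioms of a dyadic diameter function from \cite{HM12} (monotonicity under inclusion and the halving-type comparability, i.e. $\Delta(I') \le \Delta(I)$ when $I' \subset I$ and the children of $I$ have $\Delta$-diameter comparable to $\frac12\Delta(I)$). If the axioms only give $\Delta(\text{child}) \le \Delta(\text{parent})$ rather than a clean factor of $2^{-(m-n)}$, then the geometric series argument needs the observation that consecutive intervals in the strictly-decreasing (resp. increasing) run have strictly decreasing (resp. increasing) levels, so I can still sum $\sum_{k} \Delta(J_k)$ over one monotone run by noting these intervals are nested-adjacent and their union is contained in a single level-$n$ interval, bounding the whole run's contribution by the $\Delta$-diameter of that one level-$n$ interval — giving $\max\{\Delta(I):I\in\mathcal I_n\}$ per run and $2\max\{\Delta(I):I\in\mathcal I_n\}$ total. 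This latter approach sidesteps the need for a precise halving estimate and is the version I would write up.
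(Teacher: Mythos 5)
Your instinct that a geometric-series bound $\Delta(J_k)\lesssim 2^{-(m-n)}\max\{\Delta(I):I\in\mathcal I_n\}$ is unavailable is correct: for dyadic diameter functions in $\mathcal S_1$, the children of an interval may have the \emph{same} $\Delta$-diameter as their parent, so no per-level decay is guaranteed. But the fallback you outline also has a gap. Writing $M(n):=\max\{\Delta(I):I\in\mathcal I_n\}$, neither the sum of $\Delta(J_k)$ over one monotone run nor the total discrepancy $\sum_{k}\bigl(\Delta(J_k)-\Delta_n(J_k)\bigr)$ need be bounded by $2M(n)$. For instance, a decreasing run of three intervals at levels $n+3$, $n+2$, $n+1$, each sharing an endpoint with the next, all contained in a single level-$n$ interval $J$, and each with $\Delta$ equal to $\Delta(J)$ (this is allowed in $\mathcal S_1$ when no halving occurs along that branch), contributes $(7/8+3/4+1/2)\Delta(J)>2\Delta(J)$ to the discrepancy. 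So your reduction ``it suffices to bound $\sum_k(\Delta(J_k)-\Delta_n(J_k))\le 2M(n)$'' fails, and so does the replacement estimate ``run sum $\le M(n)$.''

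The correct move --- and the paper's --- is not to re-estimate the tail sums but to build a \emph{new} chain. Let $I_{i_*}$ denote a maximal interval of the chain. If $l(I_{i_*})>n$, the entire chain lies in at most two adjacent level-$n$ intervals and $d_\Delta(x,y)\le 2M(n)$ directly. Otherwise, by \rf{L:minimalchain}, the initial run of intervals at level $>n$ has union contained in a single $J_1\in\mathcal I_n$ adjacent to the first middle-level interval, and symmetrically the terminal run is contained in some $J_2\in\mathcal I_n$. The chain $J_1,I_{i_1+1},\dots,I_{i_2-1},J_2$ joins $x$ to $y$, so the \emph{definition} of $d_\Delta$ applied to this new chain gives
\[
d_\Delta(x,y)\le\Delta(J_1)+\sum_{i=i_1+1}^{i_2-1}\Delta(I_i)+\Delta(J_2)\le M(n)+\sum_{i=1}^{N}\Delta_n(I_i)+M(n)<d_n(x,y)+\ep+2M(n),
\]
using that $\Delta=\Delta_n$ on the middle intervals (which have level $\le n$) and that the dropped tail terms are nonnegative. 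Your decomposition of the chain into two monotone runs flanking a middle block is exactly the right structure; what was missing was to re-chain with the two flanking level-$n$ intervals rather than to re-sum the tails.
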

\begin{proof}

Let $M(n):=\max\{\Delta(I)\,|\,I\in\mathcal{I}_n\}$. Fix $x,y\in \mathsf{S}$, $n\in\mathsf{N}$, and let $0<\varepsilon<M(n)$ be given. Let $\{I_i\}_{i=1}^N$ be a minimal chain of dyadic intervals joining $x$ and $y$, indexed as in the assumptions of \rf{L:minimalchain}, such that $\sum_{i=1}^N\Delta_n(I_i)<d_n(x,y)+\varepsilon$. If $\{I_i\}_{i=1}^N\subset\hI_n$, then we are done, because $\Delta=\Delta_n$ on $\hI_n$. If not, then let $I_{i_*}$ denote an interval from $\{I_i\}_{i=1}^N$ of maximal $d_0$-diameter, and write $m:=l(I_{i_*})$. If $x$ and $y$ are contained in adjacent intervals $J$ and $K$ from $\mathcal{I}_{n}$, then 
\[d_\Delta(x,y)\leq \Delta(J)+\Delta(K)\leq 2M(n)\leq d_n(x,y)+2M(n).\]
Therefore, we can assume that $x$ and $y$ are contained in non-adjacent intervals from $\mathcal{I}_n$. It follows from the minimality of $\{I_i\}_{i=1}^N$ that $m\leq n$. Therefore, either $l(I_1)\leq n$, or, by \rf{L:minimalchain}, there exists a maximal index $i$ such that $1\leq i_1<i_*$ and, if $1\leq i\leq i_1$, then $l(I_i)>n$. Similarly, either $l(I_N)\leq n$, or there exists a minimal index $i$ such that $i_*< i_2\leq N$ and, if $i_2\leq i\leq N$, then $l(I_i)>n$. Assume the existence of such $i_1$ and $i_2$ (else the following argument simplifies). Via \rf{L:minimalchain}, one can verify that the interval $\sigma_1:=\bigcup_{i=1}^{i_1}I_i$ is contained in some interval $J_1\in\mathcal{I}_n$ adjacent (on the left) to $I_{i_1+1}$. Similarly, $\sigma_2:=\bigcup_{i=i_2}^{N}I_i$ is contained in some interval $J_2\in\mathcal{I}_n$ adjacent (on the right) to $I_{i_2-1}$. Thus we have 
\begin{align*}
d_\Delta(x,y)&\leq \Delta(J_1)+\sum_{i=i_1+1}^{i_2-1}\Delta(I_i)+\Delta(J_2)=\Delta(J_1)+\sum_{i=i_1+1}^{i_2-1}\Delta_n(I_i)+\Delta(J_2)\\
&\leq\sum_{i=1}^N\Delta_n(I_i)+2M(n)<d_n(x,y)+\varepsilon+2M(n).
\end{align*}
Since $\varepsilon>0$ was arbitrary, we are done. 
\end{proof}

\section{Constructing a $1$-Lipschitz map $F_0:\Gamma\to\Gamma_0$}\label{S:lip}

Let $\Gamma$ denote a bounded turning Jordan circle or arc. Our first step towards the construction of a Lipschitz light map $F:\Gamma\to\mathsf{R}$ is to realize that it is sufficient to find a Lipschitz light map $F:\Gamma\to\Gamma_0$. This is because $\Gamma_0$ is easily seen to admit a Lipschitz light map into $\mathsf{R}$, and one can verify that the composition of a Lipschitz light map from $\Gamma$ to $\Gamma_0$ with a Lipschitz light map from $\Gamma_0$ into $\mathsf{R}$ is Lipschitz light (as noted in \cite[Section 5]{David19}). See also our comments at the outset of \rf{S:light}.

Next, we again recall the following result of Herron and Meyer.

\begin{theorem}[\cite{HM12}]
If $\Gamma$ is a bounded turning Jordan circle (or arc), then $\Gamma$ is bi-Lipschitz homeomorphic to some Jordan circle in $\mathcal{S}_1$ (or $\mathcal{S}_1'$).
\end{theorem}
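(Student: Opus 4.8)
The statement to prove is the theorem of Herron and Meyer: every bounded turning Jordan circle (resp.\ arc) $\Gamma$ is bi-Lipschitz homeomorphic to some Jordan circle in $\mathcal{S}_1$ (resp.\ arc in $\mathcal{S}_1'$). The plan is to build the bi-Lipschitz equivalence by constructing, on the unit circle $\mathsf{S}=[0,1]/\{0,1\}$, a suitable dyadic diameter function $\Delta$ adapted to $\Gamma$, and then to transfer the metric of $\Gamma$ to $d_\Delta$ via the canonical parametrization. More precisely, first I would fix a homeomorphism $\phi\colon \mathsf{S}\to\Gamma$ and use it to index points of $\Gamma$ by dyadic endpoints: for each $n$ and each $I\in\mathcal{I}_n$ with endpoints $a,b$, set $\Delta(I):=\diam(\Gamma[\phi(a),\phi(b)])$, where $\Gamma[\phi(a),\phi(b)]$ is the sub-arc of $\Gamma$ cut off between the two dyadic points (in the circle case one must check that for large $n$ this is the ``short'' arc, so that the minimal-diameter convention in the definition of $\Gamma[x,y]$ is the geometrically correct one). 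One should verify that $\Delta$ is indeed a dyadic diameter function in the sense of \cite{HM12}: monotonicity under the dyadic tree order, the normalization/decay conditions, and — crucially — that the snowflake parameter works out to $\sigma=1$, which is where the hypothesis $C$-bounded turning enters quantitatively.

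The next step is to compare $d_\Delta$ on $\mathsf{S}$ with the pulled-back metric $d_\Gamma(s,t):=|\phi(s)-\phi(t)|_\Gamma$. For the upper bound $d_\Delta(s,t)\lesssim d_\Gamma(s,t)$: given $s,t$, pick the coarsest dyadic intervals straddling $s$ and $t$ together with the intervening ones to form a chain; by construction each $\Delta(I)$ is a diameter of a sub-arc of $\Gamma$, and bounded turning lets one control the total sum by a constant times $\diam(\Gamma[\phi(s),\phi(t)])\leq C\,d_\Gamma(s,t)$. Here the dyadic bookkeeping from \rf{L:minimalchain} and \rf{L:distconverge} is exactly the right tool — one wants a minimal chain whose largest interval is comparable to the ``gap'' between $s$ and $t$ and whose lengths decrease geometrically on either side, so that the geometric series sums to $O(\Delta(I_{i_*}))=O(d_\Gamma(s,t))$. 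For the lower bound $d_\Gamma(s,t)\lesssim d_\Delta(s,t)$: any chain of dyadic intervals joining $s$ to $t$ has connected union containing both points, so its image under $\phi$ is a connected subset of $\Gamma$ containing $\phi(s),\phi(t)$; hence $d_\Gamma(s,t)\leq \diam(\Gamma[\phi(s),\phi(t)])\leq\sum_k\diam(\phi(J_k))=\sum_k\Delta(J_k)$, and taking the infimum gives $d_\Gamma(s,t)\leq d_\Delta(s,t)$ (even with constant $1$, modulo the circle/arc convention). Combining the two bounds shows $\phi\colon(\mathsf{S},d_\Delta)\to\Gamma$ is bi-Lipschitz, which is the assertion.

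I expect the main obstacle to be twofold. First, verifying that the $\Delta$ built from $\Gamma$'s sub-arc diameters genuinely satisfies the axioms of a \emph{dyadic diameter function with $\sigma=1$} as formalized in \cite{HM12} — in particular the precise regularity/comparability conditions that class $\mathcal{S}_1$ imposes on $\Delta$ along the tree (e.g.\ that a parent's $\Delta$-value is comparable to, and not too much larger than, the sum of its children's values, with constants depending only on $C$); this is a technical unpacking of the \cite{HM12} definitions rather than a deep argument, but it is where the real content sits. Second, the circle-versus-arc distinction and the ``minimal diameter component'' convention require care: near antipodal dyadic points the sub-arc $\Gamma[\phi(a),\phi(b)]$ need not be the one ``between'' $a$ and $b$ in the naive dyadic sense, so one must either pass to a sufficiently deep level before defining $\Delta$ or argue that the discrepancy only affects constantly many top-level intervals and hence is absorbed into the bi-Lipschitz constant. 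Since this theorem is quoted from \cite{HM12}, in the paper itself I would simply cite it; the above is the route I would take were I to reprove it.
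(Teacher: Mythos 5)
This theorem is quoted from \cite{HM12} and the paper itself offers no proof, so there is nothing internal to compare your argument against; your closing remark that in the paper one simply cites the result is exactly what the paper does. That said, your sketch of a reproof has a genuine gap worth flagging. You begin by fixing \emph{an arbitrary} homeomorphism $\phi\colon\mathsf{S}\to\Gamma$ and then declaring $\Delta(I):=\diam(\Gamma[\phi(a),\phi(b)])$, deferring the verification that this $\Delta$ lies in $\mathcal{S}_1$ to a ``technical unpacking of definitions.'' But for an arbitrary $\phi$ this verification will simply fail: nothing prevents a homeomorphism from sending one child of a dyadic interval to a sub-arc of diameter comparable to the parent's while the other child maps to something exponentially smaller, in which case $\Delta$ violates the decay and comparability axioms that define the class $\mathcal{S}_1$, and your upper bound $d_\Delta\lesssim d_\Gamma$ (which needs the chain's $\Delta$-values to form a summable geometric tail) breaks down. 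The actual content of Herron--Meyer is the \emph{construction} of a good parametrization --- roughly, a recursive bisection of $\Gamma$ at points chosen so that the two resulting sub-arcs have comparable diameters, with bounded turning guaranteeing that such balanced bisection points exist and that the resulting $\Delta$ satisfies the $\sigma=1$ axioms. Your lower bound $d_\Gamma\leq d_\Delta$ is fine as stated, and your caution about the circle's short-arc convention near antipodes is a real (if secondary) issue, but the omission of the choice of $\phi$ is not a detail to be unpacked; it is the theorem. As a minor anachronism, you invoke \rf{L:minimalchain} and \rf{L:distconverge} --- lemmas proved in \emph{this} paper about a $\Delta$ already known to be in $\mathcal{S}_1$ --- as tools inside a reproof of the very theorem that produces such a $\Delta$, which would be circular if taken literally.
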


Here $\mathcal{S}_1$ is defined as in \cite{HM12}. The collection $\mathcal{S}_1'$ can be analogously defined using dyadic diameter functions on the unit interval $[0,1]$. We remark that the validity of this extension of the main result of \cite{HM12} to Jordan arcs is pointed out by Herron and Meyer on page 605 of \cite{HM12}.

Since Lipschitz dimension is invariant under bi-Lipschitz homeomorphisms, we may work exclusively with Jordan circles in $\mathcal{S}_1$ (or arcs in $\mathcal{S}_1'$). We will only present the details for weak quasicircles; the details for weak quasiarcs are analogous. Thus, given a curve $\Gamma=(\mathsf{S},d_\Delta)\in\mathcal{S}_1$, we construct a Lipschitz light map $F:\Gamma\to\Gamma_0$. 

We will need the following map $f$ in order to achieve this goal, which we will refer to as a \textit{folding map}. Given an interval $I\subset \mathsf{S}$, divide $I$ into two dyadic subintervals of equal length with disjoint interiors, and denote these two subintervals by $I^0$ and $I^1$, respectively. We also divide $I$ into four consecutive dyadic subintervals of equal length with disjoint interiors, and denote these four subintervals by $I^{00}$, $I^{01}$, $I^{10}$, and $I^{11}$, respectively. Thus $I^0=I^{00}\cup I^{01}$ and $I^1=I^{10}\cup I^{11}$. Assume these intervals are indexed (in binary) such that adjacent intervals proceed consecutively along the positive orientation in $\mathsf{S}$. Finally, divide each of $I^{01}$ and $I^{10}$ into two dyadic subintervals of equal length with disjoint interiors, and denote these subintervals by $I^{010}$, $I^{011}$, $I^{100}$, and $I^{101}$, respectively. Thus $I^{01}=I^{010}\cup I^{011}$ and $I^{10}=I^{100}\cup I^{101}$. Again we index these intervals such that their order reflects the positive orientation of $\mathsf{S}$. The map $f:I\to I$ is defined by its action on these subintervals. It maps
\begin{itemize}
	\item{$I^{00}$ linearly onto $I^0$ in an orientation preserving manner,}
	\item{$I^{010}$ linearly onto $I^{10}$ in an orientation preserving manner,}
	\item{$I^{011}$ linearly onto $I^{10}$ in an orientation reversing manner,}
	\item{$I^{100}$ linearly onto $I^{01}$ in an orientation reversing manner,}
	\item{$I^{101}$ linearly onto $I^{01}$ in an orientation preserving manner, and}
	\item{$I^{11}$ linearly onto $I^1$ in an orientation preserving manner.}
\end{itemize}

We note that this definition of a folding map can be scaled linearly and applied to any interval. Thus, for any $n\in\mathsf{N}$, let $I\in\mathcal{I}_n$. If the two dyadic children $I'$ and $I''$ of $I$ satisfy $\Delta(I')=\Delta(I'')=\frac{1}{2}\Delta(I)$, then we define the map $f_n:I\to I$ to be the identity map. Thus $f_n$ is an isometry from $(I,d_{n+1})\to(I,d_n)$. If $\Delta(I')=\Delta(I'')=\Delta(I)$, then we define the map $f_n:I\to I$ to be a folding map. The map $f_n:\Gamma_{n+1}\to\Gamma_n$ is defined in this manner on each interval $I\in\mathcal{I}_n$. 

\begin{lemma}\label{L:fold}
For each $n\in\mathsf{N}$, the map $f_n:\Gamma_{n+1}\to\Gamma_n$ is $1$-Lipschitz.
\end{lemma}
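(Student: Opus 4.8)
The plan is to prove the sharper statement $d_n(f_n(x),f_n(y))\le d_{n+1}(x,y)$ for all $x,y\in\mathsf S$ by a chain-replacement argument: starting from a chain of dyadic intervals that nearly realizes $d_{n+1}(x,y)$, I replace it one interval at a time by a chain in $\Gamma_n$ of no greater total $\Delta_n$-weight joining $f_n(x)$ to $f_n(y)$. The key ingredient is the following \emph{local} claim: for every $J\in\hat{\mathcal I}$ the image $f_n(J)$ is the union of a chain $\mathcal C_J$ of at most two dyadic intervals, and $\sum_{K\in\mathcal C_J}\Delta_n(K)\le\Delta_{n+1}(J)$. Granting this, if $\{J_k\}_{k=1}^N$ is any chain joining $x$ to $y$ with $\sum_k\Delta_{n+1}(J_k)<d_{n+1}(x,y)+\varepsilon$, then the concatenation $\mathcal C:=\mathcal C_{J_1}\cup\dots\cup\mathcal C_{J_N}$ has $\bigcup\mathcal C=\bigcup_k f_n(J_k)=f_n\!\left(\bigcup_k J_k\right)$, which is connected (the continuous image of a connected set) and contains $f_n(x)$ and $f_n(y)$; hence $\mathcal C$ joins $f_n(x)$ to $f_n(y)$ and has total $\Delta_n$-weight $\sum_k\sum_{K\in\mathcal C_{J_k}}\Delta_n(K)\le\sum_k\Delta_{n+1}(J_k)<d_{n+1}(x,y)+\varepsilon$. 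Letting $\varepsilon\to0$ finishes the proof; this argument uses neither minimality of the chain nor any earlier lemma.

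It remains to prove the local claim, which I would do by cases on $l(J)$. If $l(J)\le n$, then $J$ is a union of intervals from $\mathcal I_n$, each of which $f_n$ maps onto itself; hence $f_n(J)=J$, and taking $\mathcal C_J=\{J\}$ we have $\Delta_n(J)=\Delta(J)=\Delta_{n+1}(J)$. If $l(J)>n$, let $I\in\mathcal I_n$ be the level-$n$ interval containing $J$. When $I$ is an ``identity'' interval, $f_n|_I=\operatorname{id}$, so $\mathcal C_J=\{J\}$ again works, provided $\Delta_n(J)=\Delta_{n+1}(J)$; this is a short computation from the halving rule defining $\Delta_n$ and $\Delta_{n+1}$ together with the fact that in this case each child of $I$ has $\Delta$-value $\tfrac12\Delta(I)$, so that both sides equal $2^{-(l(J)-n)}\Delta(I)$.

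The remaining case---$l(J)>n$ with $I$ a ``folding'' interval---carries the real content. I would first extract the explicit form of $f:=f_n|_I$ from its definition: normalizing $I=[0,1]$, the map $f$ is piecewise linear with $|f'|\equiv2$, has breakpoints only at the dyadic points $\tfrac38$ and $\tfrac58$, and satisfies $f(s)=2s$ on $[0,\tfrac38]$, $f(s)=\tfrac32-2s$ on $[\tfrac38,\tfrac58]$, and $f(s)=2s-1$ on $[\tfrac58,1]$; in particular $f$ fixes $0,\tfrac12,1$ and maps $[0,1]$ onto $[0,1]$. If $l(J)\ge n+3$, then $J$ is too short to contain $\tfrac38$ or $\tfrac58$ in its interior, so $f|_J$ is affine with slope $\pm2$, and a direct check shows that $f(J)$ is a single dyadic interval of level $l(J)-1$; the halving rule then yields $\Delta_n(f(J))=2^{-(l(J)-1-n)}\Delta(I)=\Delta_{n+1}(J)$, so $\mathcal C_J=\{f(J)\}$. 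The finitely many remaining intervals---the children $I^0,I^1$ and the grandchildren $I^{00},I^{01},I^{10},I^{11}$ of $I$---I would handle by inspection: $f$ maps $I^{00}$ onto $I^0$ and $I^{11}$ onto $I^1$, with equal $\Delta_n$-weight; it maps $I^{01}$ onto $I^{10}$ and $I^{10}$ onto $I^{01}$, in each case onto a single dyadic interval of strictly smaller $\Delta_n$-weight; and it maps each of $I^0,I^1$ onto a union of two adjacent dyadic intervals of total $\Delta_n$-weight $\tfrac34\Delta(I)\le\Delta(I)=\Delta_{n+1}(I^0)=\Delta_{n+1}(I^1)$.

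I expect the only genuine obstacle to be the bookkeeping in this last case: correctly combining the six orientation-preserving and orientation-reversing linear pieces in the definition of the folding map into the global formula above, and then verifying that each relevant small dyadic interval is carried to a union of dyadic intervals with the asserted weight bound. This is elementary but error-prone; everything else reduces to the halving rule for $\Delta_n$ and $\Delta_{n+1}$ and the soft chain manipulation of the first paragraph.
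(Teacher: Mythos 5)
Your proof is correct and follows essentially the same strategy as the paper's: verify that $f_n$ carries each dyadic interval $J$ to a union of at most two dyadic intervals of total $\Delta_n$-weight at most $\Delta_{n+1}(J)$, and then push this through the chain definitions of $d_n$ and $d_{n+1}$. Your case analysis is somewhat more explicit (extracting the global piecewise-linear formula for the folding map and splitting on $l(J)\leq n$, $l(J)=n+1$, $l(J)=n+2$, $l(J)\geq n+3$), but the decomposition and the final chain-replacement step match the paper's argument.
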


\begin{proof}
We examine the image of an interval $I\in\hat{\mathcal{I}}$ under the map $f_n$. If $I\in \hI_n$, then $f_n(I)=I\in\hat{\mathcal{I}}$. If $I\in\hat{\mathcal{I}}\setminus\hI_{n+1}$, then $f_n(I)\in\hat{\mathcal{I}}$ and $\diam_n(f_n(I))\leq \diam_{n+1}(I)$. If $I\in\mathcal{I}_{n+1}$ (the only remaining possibility), then $f_n$ fixes the endpoints of $I$, and $I\subset f_n(I)$. In particular, if $f_n$ is the identity on $\tilde{I}$ (the dyadic parent of $I$), then $f_n(I)=I$. If $f_n$ is a folding map on $\tilde{I}$, then $f_n(I)$ is the union of $I$ with an interval $J\in\mathcal{I}_{n+2}$ that is adjacent to $I$. Moreover, it is straightforward to verify that 
\begin{align*}
\diam_n(f_n(I))&=\diam_n(I\cup J)=\diam_n(I)+\diam_n(J)\\
	&=\frac{3}{4}\diam_n(\tilde{I})=\frac{3}{4}\diam_{n+1}(I)<\diam_{n+1}(I).
\end{align*}
Therefore, given a chain $\{I_i\}_{i=1}^N$ of dyadic intervals, $\{f_n(I_i)\}_{i=1}^N$ can be written as a chain of dyadic intervals $\{I_j'\}_{j=1}^{N'}$ such that $\cup_{i=1}^Nf_n(I_i)=\cup_{j=1}^{N'}I_j'$, and
\[\sum_{i=1}^N\diam_n(f_n(I_i))=\sum_{j=1}^{N'}\diam_n(I_j')=\sum_{j=1}^{N'}\Delta_n(I_j').\]
Thus, if $\{I_i\}_{i=1}^N$ joins $x$ and $y$, then $\{I_j'\}_{j=1}^{N'}$ joins $f_n(x)$ and $f_n(y)$, and
\[d_n(f_n(x),f_n(y))\leq\sum_{j=1}^{N'}\Delta_n(I_j')=\sum_{i=1}^N\diam_n(f_n(I_i))\leq\sum_{i=1}^N\Delta_{n+1}(I_i).\] 
It follows from the definition of $d_{n+1}(x,y)$ that $d_n(f_n(x),f_n(y))\leq d_{n+1}(x,y)$. 
\end{proof}

Given any $m\leq n\in\mathsf{N}$, we define $F_{m,n}:=f_m\circ f_{m+1}\circ \dots \circ f_n:\Gamma_{n+1}\to \Gamma_m$. If $m=n$, then we understand $F_{m,m}=F_{n,n}$ to denote $f_n$. As a composition of $1$-Lipschitz maps (\rf{L:fold}), each map $F_{m,n}:\Gamma_{n+1}\to\Gamma_m$ is $1$-Lipschitz. Furthermore, this sequence of maps induces a $1$-Lipschitz map $F_m:(\mathcal{D},d_\Delta)\to(\Gamma_m,d_m)$. To see this, let $x\in \mathcal{D}$, and let $k\in\mathsf{N}$ be the smallest integer such that $x\in \mathcal{D}_k$. For any $n\geq k$, the map $f_n$ fixes the set $\mathcal{D}_k$. Therefore, if $n\geq k$, then $f_n(x)=x$. If $n\geq k>m$, we then observe that $\lim_{n\to+\infty}F_{m,n}(x)=F_{m,k}(x)$. In this case, define $F_m(x):=F_{m,k}(x)$. If $n>m\geq k$, then define $F_m(x):=x$. 

To see that $F_m$ thus defined is $1$-Lipschitz on $\mathcal{D}$, let $x,y$ denote any two points in $\mathcal{D}$. Choose $k\in\mathsf{N}$ such that $x,y\in \mathcal{D}_k$ and  $k>m$. Via \rf{E:leqn} and the fact that, for all $n\geq m$ the map $F_{m,n}$ is $1$-Lipschitz, we have
\[d_m(F_m(x),F_m(y))=d_m(F_{m,k}(x),F_{m,k}(y))\leq d_{k+1}(x,y)\leq d_\Delta(x,y).\]
Since $\Gamma$ is complete, $\mathcal{D}$ is dense in $\Gamma$ (cf.\,\cite[Section 3.2]{HM12}), and $F_m$ is Lipschitz on $\mathcal{D}$, it is then straightforward to extend $F_m$ such that $F_m:\Gamma\to\Gamma_m$ is $1$-Lipschitz. 

We note that we may also view the maps $F_{m,n}$ as acting on $\Gamma$. Moreover, it follows from \rf{E:leqn} that $F_{m,n}:\Gamma\to\Gamma_m$ is $1$-Lipschitz. With this in mind, we prove the following lemma.

\begin{lemma}\label{L:uniform}
For each $m\in\mathsf{N}$, the maps $F_{m,n}:\Gamma\to\Gamma_m$  uniformly converge to $F_m:\Gamma\to\Gamma_m$ as $n\to+\infty$.
\end{lemma}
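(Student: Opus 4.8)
The plan is to show that $d_m(F_{m,n}(x),F_m(x)) \to 0$ uniformly in $x \in \Gamma$ as $n \to \infty$, by controlling the displacement caused by each folding map $f_j$ and summing a convergent tail. The key quantitative input is already embedded in the proof of \rf{L:fold}: on an interval $I \in \mathcal{I}_{n+1}$ over which the parent $\tilde I$ is folded, the map $f_n$ moves points within $f_n(I) = I \cup J$, a set of $d_n$-diameter $\tfrac{3}{4}\diam_n(\tilde I)$. More to the point, for \emph{any} point $p$, the displacement $d_n(f_n(p), p)$ is bounded by the $d_{n+1}$-diameter of the interval of $\mathcal{I}_n$ containing $p$ (since $f_n$ maps each $I \in \mathcal{I}_n$ into itself and acts as identity or a fold within $I$), hence by $M(n) := \max\{\Delta(I) : I \in \mathcal{I}_n\}$.

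First I would establish the single-step estimate: for every $p \in \Gamma$ and every $n \in \mathsf{N}$,
\[
d_n(f_n(p), p) \leq M(n),
\]
and more generally, pushing forward by the $1$-Lipschitz map $F_{m,n-1}$, for $m \leq n$,
\[
d_m(F_{m,n}(p), F_{m,n-1}(p)) = d_m\big(F_{m,n-1}(f_n(p)), F_{m,n-1}(p)\big) \leq d_n(f_n(p), p) \leq M(n).
\]
Second, I would telescope: for $m \leq n < n'$,
\[
d_m(F_{m,n'}(p), F_{m,n}(p)) \leq \sum_{j=n+1}^{n'} M(j).
\]
Third, I need $\sum_{j} M(j) < \infty$. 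This is where the hypothesis $\Gamma \in \mathcal{S}_1$ (snowflake parameter $\sigma = 1$) must be invoked: dyadic diameter functions in $\mathcal{S}_1$ satisfy a decay condition ensuring $\sum_n M(n) < \infty$ — indeed $M(n) \to 0$ geometrically fast enough, by the defining constraints on diameter functions in \cite{HM12} (in the $\sigma=1$ catalogue each $\Delta(I')$ is either $\tfrac12\Delta(I)$ or $\Delta(I)$, and the collection is required to have $\diam_\Delta(\mathsf{S})$ finite, which forces the maximal diameters at level $n$ to form a summable sequence). Granting this, the tail $\sum_{j>n} M(j)$ is independent of $p$ and tends to $0$, so $\{F_{m,n}(p)\}_n$ is uniformly Cauchy; since it converges pointwise to $F_m(p)$ on the dense set $\mathcal{D}$ and all maps are $1$-Lipschitz, the uniform limit is $F_m$ on all of $\Gamma$.

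The main obstacle is the third step: pinning down precisely why $\sum_n M(n) < \infty$ for curves in $\mathcal{S}_1$, i.e. extracting the right summability statement from the definition of $\mathcal{S}_1$ in \cite{HM12}. Everything else is a routine telescoping argument built on \rf{L:fold}. A secondary point to handle carefully is the interchange of limits at points of $\mathcal{D}$ versus general points of $\Gamma$: one should verify the single-step bound $d_n(f_n(p),p) \leq M(n)$ directly for all $p \in \Gamma$ (not just $p \in \mathcal{D}$), which follows because $f_n$ preserves each $I \in \mathcal{I}_n$ setwise and $\diam_n(I) = \Delta(I) \leq M(n)$ by \rf{E:intn}; the extension of $F_m$ from $\mathcal{D}$ to $\Gamma$ is then consistent with $F_m(p) = \lim_n F_{m,n}(p)$ for every $p$, which is what makes the uniform-Cauchy conclusion legitimate.
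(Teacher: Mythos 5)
Your telescoping strategy is clean in outline, but it rests on a claim that is false in general for $\mathcal{S}_1$: that $\sum_n M(n) < \infty$, where $M(n) := \max\{\Delta(I) : I \in \mathcal{I}_n\}$. You flag this yourself as the main obstacle, and it is not one you can overcome, because the catalogue $\mathcal{S}_1$ only forces $M(n)\to 0$, not summability. With $\sigma=1$ each child diameter is either equal to or half the parent diameter, so $M(n)$ is a non-increasing sequence of dyadic rationals tending to $0$, but nothing prevents it from decaying as slowly as one likes. For instance a diameter function with $M(n)=2^{-k}$ for $4^{k-1}\le n<4^k$ is perfectly admissible, and there $\sum_n M(n)$ diverges, so your tail bound $\sum_{j>n}M(j)$ does not go to $0$ and the uniform-Cauchy conclusion fails. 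The finiteness of $\diam_\Delta(\mathsf{S})$ that you invoke is a normalization ($\Delta(\mathsf{S})=1$) and puts no summability constraint on the level-wise maxima.

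The paper sidesteps this entirely by avoiding a telescoping error accumulation. Fix $x$, let $I_n\in\mathcal{I}_n$ contain $x$, and pick a dyadic point $x_n\in\mathcal{D}_n\cap I_n$. One checks that $F_m(x)\in F_{m,n}(I_n)$ (using that the folding maps $f_{n+j}$ preserve $I_n$ setwise, so all the intermediate images stay inside $I_n$ and pass through $F_{m,n}$), while $F_{m,n}(x)$ is at $d_m$-distance at most $d_\Delta(x,x_n)\le \diam_\Delta(I_n)$ from $F_{m,n}(x_n)\in F_{m,n}(I_n)$. Thus
\[
d_m\bigl(F_{m,n}(x),F_m(x)\bigr)\le d_\Delta(x,x_n)+\diam_m\bigl(F_{m,n}(I_n)\bigr)\le 2\,\diam_\Delta(I_n)\le 2M(n),
\]
which goes to $0$ uniformly in $x$ using only $M(n)\to 0$. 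The key structural insight your proposal misses is that the limit point $F_m(x)$ is itself trapped inside the small set $F_{m,n}(I_n)$ for every $n$; this gives a single-step bound against the limit rather than a telescoped bound against a far-away iterate. Your single-step displacement estimate $d_n(f_n(p),p)\le M(n)$ is correct and is essentially the same observation, but using it additively across all levels demands a hypothesis the catalogue does not supply.
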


\begin{proof}
Fix $m\in\mathsf{N}$ and $\varepsilon>0$. Choose $M\in\mathsf{N}$ such that
\[n\geq M \quad\text{implies} \quad\max\{\Delta(I)\,|\,I\in\mathcal{I}_n\}<\varepsilon/2.\] For any $x\in\Gamma$, there exists a nested sequence of dyadic intervals $I_n\in\mathcal{I}_n$ such that, for every $n\in\mathsf{N}$, we have $I_n\subset I_{n-1}$ and $x\in I_n$. Furthermore, there exists $x_n\in\mathcal{D}_n$ such that $x_n\in I_n$, and so $d_\Delta(x_n,x)\to0$. Assuming $n>\max\{m,M\}$, we have $F_m(x_{n+j})=F_{m,n+j}(x_{n+j})$. If $j=0$, write $w_{n,0}:=x_n$. If $j\geq1$, write $w_{n,j}:=f_{n+1}\circ\dots \circ f_{n+j}(x_{n+j})$. In either case, we note that $F_m(x_{n+j})=F_{m,n+j}(x_{n+j})=F_{m,n}(w_{n,j})$. Since $f_{n+j}(I_n)=I_n$, we have $w_{n,j}\in I_n$. Thus, 
\[F_m(x)=\lim_{j\to+\infty} F_m(x_{n+j})=\lim_{j\to+\infty}F_{m,n}(w_{n,j})\in F_{m,n}(I_n).\]
Combining these observations, we find that, for $n\geq \max\{m,M\}$, we have
\begin{align*}
d_m(F_{m,n}(x),F_m(x))&\leq d_m(F_{m,n}(x),F_{m,n}(x_{n}))+d_m(F_{m,n}(x_{n}),F_m(x))\\
&\leq d_\Delta(x,x_{n})+\diam_m(F_{m,n}(I_n))\\
&\leq 2\diam_\Delta(I_n)\leq 2\max\{\Delta(I)\,|\,I\in\mathcal{I}_n\}<\varepsilon.
\end{align*}
It follows that $F_{m,n}:\Gamma\to\Gamma_m$ is uniformly convergent to $F_m:\Gamma\to\Gamma_m$. 
\end{proof}

\section{Proving that $F_0:\Gamma\to\Gamma_0$ is Lipschitz light}\label{S:light}

Our goal in this section is to verify the existence of a constant $C\geq 1$ such that, for any subset $E\subset \Gamma_0$, the $\diam_0(E)$-components of $F_0^{-1}(E)$ have $d_\Delta$-diameter bounded above by $C\diam_0(E)$. Via the following lemma, this will be sufficient to prove that $F_0:\Gamma\to\Gamma_0$ is Lipschitz light, and thus (via the comments at the outset of \rf{S:lip}) that $\Gamma$ has Lipschitz dimension equal to $1$. 

\begin{lemma}\label{L:def}
Suppose there exists a constant $C\geq 1$ such that $F:\Gamma\to\Gamma_0$  is $C$-Lipschitz, and, for any subset $E\subset \Gamma_0$ such that $\diam_0(E)>0$, the $\diam_0(E)$-components of $F^{-1}(E)$ have $d_\Delta$-diameter bounded by $C\,\diam_0(E)$. This implies that, for any $r>0$ and any subset $E\subset \Gamma_0$ satisfying $\diam_0(E)\leq r$, the $r$-components of $F^{-1}(E)$ have $d_\Delta$-diameter bounded by $C'r$, for $C':=\max\{C,8\}$. 
\end{lemma}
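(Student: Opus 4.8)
The plan is to reduce an arbitrary pair $(r, E)$ with $\diam_0(E) \le r$ to the hypothesis, which concerns the specific value $r = \diam_0(E)$. There are two regimes to handle. First, if $\diam_0(E) = 0$, then $E$ is a single point (or empty), $F^{-1}(E)$ is a single fiber, and one must bound the diameter of its $r$-components directly; the point here is that $\Gamma_0$ has a uniformly bounded geometry — indeed $\diam_0(\mathsf S) = \diam_0(I)$ for $I = \mathsf S \in \mathcal I_0$ is a fixed constant (at most $1$ in the normalization of \cite{HM12}, which is why the constant $8$ appears) — so any subset of $\Gamma_0$, in particular $\Gamma_0$ itself, has $d_\Delta$-diameter at most this constant; hence \emph{every} subset of $\Gamma$, and a fortiori every $r$-component of $F^{-1}(E)$, has $d_\Delta$-diameter at most $8$. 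This is where $C' := \max\{C, 8\}$ enters.

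Second, suppose $0 < \diam_0(E) \le r$. Let $A$ be an $r$-component of $F^{-1}(E)$. The key observation is that an $r$-component need not be a $\diam_0(E)$-component, but I can enlarge $E$ slightly without changing its diameter-scale too much: consider $E' := \{y \in \Gamma_0 : d_0(y, E) \le r\}$, the closed $r$-neighborhood of $E$ in $\Gamma_0$. Then $\diam_0(E') \le \diam_0(E) + 2r \le 3r$, and crucially $F^{-1}(E') \supseteq F^{-1}(E)$. Now I claim $A$ is contained in a single $3r$-component of $F^{-1}(E')$: any two points of $A$ are joined by an $r$-sequence in $F^{-1}(E) \subseteq F^{-1}(E')$, which is in particular a $3r$-sequence. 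Applying the hypothesis to the set $E'$ with $\diam_0(E') =: s \le 3r$ — wait, the hypothesis is stated for the value $\diam_0(E')$, not $3r$ — so I instead want to pass to a $\diam_0(E')$-component. Since $\diam_0(E') \le 3r$ is not obviously $\ge r$, the cleaner route is: the $r$-sequence witnessing connectivity in $A$ is an $s$-sequence provided $s \ge r$; if $\diam_0(E') < r$ this fails, so I should split once more on whether $\diam_0(E')\ge r$. If $\diam_0(E') \ge r$, then $A$ lies in a single $\diam_0(E')$-component of $F^{-1}(E')$, whose $d_\Delta$-diameter is $\le C \diam_0(E') \le 3Cr \le C'r$ — but this loses a factor of $3$, so I will instead take $E'$ to be the closed $r$-neighborhood only when needed and optimize the constant, or simply absorb the factor $3$ by noting the hypothesis can be applied with a sharper choice; if $\diam_0(E') < r$ then $\diam_0 F^{-1}(E') \le C \cdot (\text{small})$, or one falls back to the diameter bound $8$.

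The main obstacle is bookkeeping the constant so that $C' = \max\{C, 8\}$ comes out exactly, rather than something like $3C$. I expect the correct argument avoids enlarging $E$ and instead argues as follows: let $A$ be an $r$-component of $F^{-1}(E)$, and note $F(A) \subseteq E$ has $\diam_0(F(A)) \le \diam_0(E) \le r$. If $\diam_0(F(A)) > 0$, then $A$ is contained in some $\diam_0(F(A))$-component of $F^{-1}(F(A))$: indeed an $r$-sequence in $A$ maps under the $C$-Lipschitz map $F$ — no, that goes the wrong way. The honest fix is that an $r$-sequence in $A \subseteq F^{-1}(E)$ is automatically a $\diam_0(E)$-sequence is false since $r \ge \diam_0(E)$ makes it \emph{not} automatic in the needed direction. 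So the genuine content is: replace $E$ by $E'' := F(A)$, observe $F^{-1}(E'') \supseteq A$, and $A$ is $r$-connected hence (since $r \ge \diam_0(E) \ge \diam_0(E'')$, and we may assume $\diam_0(E'') > 0$ else use the bound $8$) the set $A$ lies in a single $r$-component of $F^{-1}(E'')$; but the hypothesis controls $\diam_0(E'')$-components, and since $\diam_0(E'') \le r$, every $\diam_0(E'')$-component is contained in an $r$-component, not conversely. Thus one must enlarge: take the $\diam_0(E'')$-component $A_0$ of $F^{-1}(E'')$ containing a chosen basepoint of $A$; then $A$ may stick out of $A_0$. I therefore expect the actual proof to enlarge the target set by a controlled amount and accept a constant multiple, and that the clean statement $C' = \max\{C,8\}$ follows because in the only case where enlargement is genuinely needed ($\diam_0(E)$ much smaller than $r$) the fiber-type bound by $8$ already applies. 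I would write the proof by casework on $\diam_0(E)$ versus $r$, using the hypothesis verbatim when $\diam_0(E) \ge r/8$ (so that $C\diam_0(E) \le Cr$ after noting the component is also a $\diam_0(E)$-component up to the trivial inclusion) and the universal bound $\diam_\Delta(\Gamma) \le 8$ when $\diam_0(E) < r/8$.
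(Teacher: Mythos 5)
Your plan has the right overall shape --- reduce to the hypothesis by enlarging $E$, with a separate crude bound in one regime --- but both concrete routes you propose break down, and you never hit the specific enlargement that makes the argument work. The paper's move is this: first dispatch $r\geq 1/8$ by the universal bound $\diam_\Delta(F^{-1}(E))\leq\diam_\Delta(\Gamma)\leq 1\leq 8r$ (note: the universal bound is $1$, not $8$; the $8$ comes from $1\leq 8r$ once $r\geq 1/8$). For $r<1/8$, use that $\Gamma_0=(\mathsf S,d_0)$ carries the arc-length metric and hence is geodesic: fix $x\in E$, note $E$ lies in the arc $I(x)=\{y:d_0(x,y)\leq 1/8\}$, let $a,b$ be the extreme points of $E$ in $I(x)$, and slide $b$ out to a point $c$ with $d_0(a,c)=r$. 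Then $E':=\Gamma_0[a,c]\supset E$ has $\diam_0(E')=r$ \emph{exactly}, so every $r$-component of $F^{-1}(E)$ sits in an $r=\diam_0(E')$-component of $F^{-1}(E')$, and the hypothesis gives $d_\Delta$-diameter $\leq Cr$ with no loss of constant. Your closed $r$-neighborhood $E'$ has $\diam_0(E')\leq 3r$ and would only yield $3Cr$; you notice this yourself, but the fix (extend to a set of diameter exactly $r$ rather than a full neighborhood) is what you never quite reach.

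Your final casework proposal is broken in two concrete ways. First, when $\diam_0(E)<r$, an $r$-component of $F^{-1}(E)$ is \emph{not} ``a $\diam_0(E)$-component up to trivial inclusion'': it can be a chain of many distinct $\diam_0(E)$-components linked by jumps of size between $\diam_0(E)$ and $r$, with no a priori bound on the number of links, so the hypothesis cannot be applied to it. (You correctly flag this inclusion issue earlier in the plan, but your closing summary reverses it.) Second, the universal bound $\diam_\Delta(\Gamma)\leq 1$ yields $\leq 8r$ only when $r\geq 1/8$; it has nothing to do with the ratio $\diam_0(E)/r$. If $r$ is small and $\diam_0(E)<r/8$, the universal bound gives $1$, which is far larger than $8r$. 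The correct dichotomy is on the size of $r$ itself, and the small-$r$ case must be handled by the exact-diameter enlargement, which also covers $\diam_0(E)=0$ for free.
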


\begin{proof}
Let $r>0$, and let $E\subset \Gamma_0$ be such that $\diam_0(E)\leq r$. We may assume that $E$ is compact, and that $\diam_0(E)<r$. If $r\geq 1/8$, then we note that $\diam_0(F^{-1}(E))\leq \diam_\Delta(\Gamma)\leq 1\leq 8r$. Thus, we may assume that $r<1/8$. 

We claim that $E$ is contained in a subset $E'\subset \Gamma_0$ such that $\diam_0(E')=r$. To see this, we modify the argument employed in \cite[Remark 1.9]{David19}. We first note that $d_0=\lambda$ (the normalized length distance defined in \rf{S:prelims}). Therefore, given $x\in\Gamma_0$, the subset $I(x):=\{y\in \Gamma_0\,|\,d_0(x,y)\leq 1/8\}$ is isometric to some interval in $\mathsf{R}$ of length $1/4$. If $x\in E$, then $E\subset I(x)$. Let $a$ and $b$ denote the first and last points in $E$ along the interval $I(x)$. Thus $d_0(a,b)=\diam_0(E)<r$. Let $c\in I(x)$ be such that $d_0(a,c)=r<1/8$ and $\Gamma_0[a,b]\subset\Gamma_0[a,c]=:E'$. Then $E\subset E'$ and $\diam_0(E')=r$. Clearly, $r$-components of $F^{-1}(E)$ are contained in $r$-components of $F^{-1}(E')$, which (by assumption) have $d_\Delta$-diameter bounded by $Cr$. 
\end{proof}

With the above lemma in hand, we begin our proof that $F_0:\Gamma\to\Gamma_0$ is Lipschitz light. Fix $E\subset \Gamma_0$ such that $\diam_0(E)>0$, and let $M^*\in\mathsf{N}$ be maximal such that 
\begin{equation}\label{E:delta_size}
2^{-M^*-1}\leq\diam_0(E)<2^{-M^*}.
\end{equation}
We may assume that $M^*\geq 3$, else $\diam_\Delta(F_0^{-1}(E))\leq\diam_\Delta(\Gamma)\leq8\diam_0(E)$. By definition of $M^*$, there exist two adjacent dyadic subintervals $I,J\in\mathcal{I}_{M^*}$ such that $E\subset I\cup J$. In fact, $E$ may be contained in a single element of $\mathcal{I}_{M^*}$, but it will do no harm to assume $E$ is contained in the union of two such intervals. 

We claim that it is sufficient to examine pre-images of $H:=I\cup J$. Indeed, given any $\delta>0$, the $\delta$-components of $F^{-1}(E)$ are contained in $\delta$-components of $F^{-1}(H)$. 

For the remainder of this section, we set $\delta:=\diam_0(E)$.

\begin{lemma}\label{L:projection}
Given $n\in\mathsf{N}$ and $U\subset \Gamma_0$, we have $F_{n+1}(F_0^{-1}(U))=F_{0,n}^{-1}(U)$. 
\end{lemma}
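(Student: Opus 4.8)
The claimed identity $F_{n+1}(F_0^{-1}(U))=F_{0,n}^{-1}(U)$ relates two subsets of $\Gamma_{n+1}=(\mathsf S,d_{n+1})$. My plan is to establish the set‑theoretic factorization
\[
F_0 = F_{0,n}\circ F_{n+1}
\]
as maps on $\mathsf S$ (equivalently on $\Gamma$ and $\Gamma_{n+1}$), and then deduce the lemma by a direct chase. Granting this factorization, for $x\in\mathsf S$ we have $x\in F_0^{-1}(U)$ iff $F_{0,n}(F_{n+1}(x))\in U$ iff $F_{n+1}(x)\in F_{0,n}^{-1}(U)$. This immediately gives $F_{n+1}(F_0^{-1}(U))\subset F_{0,n}^{-1}(U)$. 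For the reverse inclusion I need surjectivity of $F_{n+1}:\mathsf S\to\mathsf S$: given $y\in F_{0,n}^{-1}(U)$, pick $x$ with $F_{n+1}(x)=y$; then $F_0(x)=F_{0,n}(y)\in U$, so $y=F_{n+1}(x)\in F_{n+1}(F_0^{-1}(U))$. Surjectivity of $F_{n+1}$ is clear since each folding map $f_k$ (and each identity map $f_k$) is surjective on $\mathsf S$, being a concatenation of affine surjections onto dyadic subintervals, hence so is the finite composition $F_{n+1}=F_{m,n}$‑type product $f_0\circ\cdots\circ f_n$ — wait, more precisely $F_{n+1}$ should be read as the map $\mathsf S\to\mathsf S$ obtained from the tower; I will spell out below that $F_{n+1}=\lim_{j}f_0\circ f_1\circ\cdots$ is not what is meant: rather $F_{n+1}$ is the analogue of $F_m$ with $m=n+1$, i.e.\ the $1$-Lipschitz limit map $\Gamma\to\Gamma_{n+1}$, and the factorization I actually need is $F_0 = F_{0,n}\circ F_{n+1}$ where now $F_{0,n}=f_0\circ\cdots\circ f_n$ is genuinely finite.

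So the crux is the pointwise identity $F_0(x)=F_{0,n}(F_{n+1}(x))$ for all $x\in\mathsf S$. On the dense set $\mathcal D$ this follows from the construction: for $x\in\mathcal D_k$ with $k>n+1$ we have $F_{n+1}(x)=f_{n+1}\circ\cdots\circ f_{k-1}(x)$ and $F_0(x)=f_0\circ\cdots\circ f_{k-1}(x)=(f_0\circ\cdots\circ f_n)\bigl(f_{n+1}\circ\cdots\circ f_{k-1}(x)\bigr)=F_{0,n}(F_{n+1}(x))$; for $x\in\mathcal D_k$ with $k\le n+1$ both sides are handled by the "identity below level $m$" conventions in the construction, and one checks the formulas match. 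Then, since $F_0$, $F_{n+1}$ are $1$-Lipschitz (hence continuous) and $F_{0,n}$ is $1$-Lipschitz, both sides of $F_0=F_{0,n}\circ F_{n+1}$ are continuous maps $\Gamma\to\Gamma_0$ agreeing on the dense set $\mathcal D$, hence they agree everywhere. (Alternatively one can invoke \rf{L:uniform}: $F_{0,n}\circ F_{k,n}$-type finite compositions converge uniformly, and $F_{0,k}=F_{0,n}\circ F_{n+1,k}$ for finite indices passes to the limit.)

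The main obstacle is purely bookkeeping: making sure the index conventions for $F_m$ on $\mathcal D$ (the three cases $n\ge k>m$, $n>m\ge k$, and the degenerate $m\ge k$ case) are correctly tracked so that the finite‑composition identity $f_0\circ\cdots\circ f_{k-1}=(f_0\circ\cdots\circ f_n)\circ(f_{n+1}\circ\cdots\circ f_{k-1})$ really does specialize to $F_0=F_{0,n}\circ F_{n+1}$ after taking the appropriate limits/stabilizations, and checking the edge case $k\le n+1$ where $F_{n+1}(x)=x$. Once the factorization $F_0=F_{0,n}\circ F_{n+1}$ and the surjectivity of $F_{n+1}:\mathsf S\to\mathsf S$ are in place, the lemma is a two‑line inclusion argument in both directions as above. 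I do not anticipate any analytic difficulty; the only care needed is that $F_{n+1}$ as a map on all of $\Gamma$ (not just $\mathcal D$) still satisfies $F_{n+1}(x)\in F_{n+1,k}(I_k)$-type containments so that continuity can be invoked cleanly, which is exactly the content already extracted in the proof of \rf{L:uniform}.
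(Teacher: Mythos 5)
Your overall plan is the same as the paper's: establish the factorization $F_0=F_{0,n}\circ F_{n+1}$ (you do this on $\mathcal D$ and extend by continuity; the paper does it by passing to the limit $F_{0,n}\circ F_{n+1,m}=F_{0,m}\to F_0$ using \rf{L:uniform}) and then use surjectivity of $F_{n+1}$ to get the reverse inclusion. The forward inclusion and the factorization argument are fine.

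The genuine gap is that you never actually prove surjectivity of $F_{n+1}:\Gamma\to\Gamma_{n+1}$. Your first justification treats $F_{n+1}$ as a finite composition of the $f_k$'s, which you correctly notice mid-sentence is wrong ($F_{n+1}$ is the limit map, not $f_0\circ\cdots\circ f_n$), but then you move on without supplying a replacement argument; in the concluding paragraph you simply assume surjectivity is ``in place.'' This is not mere bookkeeping: $F_{n+1}$ is defined on $\mathcal D$ and then extended by continuity, and surjectivity of a uniform limit of surjections is exactly the nontrivial content of the reverse inclusion. The paper handles it by explicitly lifting $x\in\Gamma_{n+1}$: set $z_{n+1}:=x$, inductively choose $z_{n+k}$ with $f_{n+k-1}(z_{n+k})=z_{n+k-1}$, show the sequence is $d_\Delta$-Cauchy because each $z_{n+k}$ stays in a fixed interval of $\mathcal I_{n+i}$ once $k\geq i$ and $\max\{\Delta(J): J\in\mathcal I_{n+i}\}\to0$, and then verify that the limit $z$ satisfies $F_{n+1}(z)=x$ and $F_0(z)=F_{0,n}(x)\in U$ via \rf{L:uniform}. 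An equivalent route to surjectivity would be: the finite-stage maps $F_{n+1,m}:\Gamma\to\Gamma_{n+1}$ are surjective, $\Gamma$ is compact, and $F_{n+1,m}\to F_{n+1}$ uniformly by \rf{L:uniform}; given $y$, pick $x_m$ with $F_{n+1,m}(x_m)=y$, extract a convergent subsequence $x_{m_k}\to x$, and estimate $d_{n+1}(F_{n+1}(x),y)\leq d_{n+1}(F_{n+1}(x),F_{n+1}(x_{m_k}))+d_{n+1}(F_{n+1}(x_{m_k}),F_{n+1,m_k}(x_{m_k}))\to0$. Some such argument is required; without it the reverse inclusion is unproven.
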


\begin{proof}
Suppose $x\in F_{n+1}(F_0^{-1}(U))$, so $x=F_{n+1}(w)$ for some $w\in F_0^{-1}(U)$. Then 
\[F_{0,n}(x)=F_{0,n}(F_{n+1}(w))=\lim_{m\to\infty}F_{0,n}(F_{n+1,m}(w))=\lim_{m\to\infty}F_{0,m}(w)=F_0(w).\]
Since $F_0(w)\in U$, it follows that $F_{n+1}(F_0^{-1}(U))\subset F_{0,n}^{-1}(U)$. 

Next, let $x\in F_{0,n}^{-1}(U)$. Write $z_{n+1}:=x$, and choose a point $z_{n+2}\in f_{n+1}^{-1}(z_{n+1})$ so that $f_{n+1}(z_{n+2})=z_{n+1}$. Inductively, for each $k\geq2$, define $z_{n+k}$ such that 
\[f_{n+k-1}(z_{n+k})=z_{n+k-1}.\]
We claim that the sequence $\{z_{n+k}\}_{k=1}^\infty$ is Cauchy with respect to  $d_\Delta$, and thus convergent to some point $z\in \Gamma$. Indeed, for any $1\leq i< j$, we note that
\[z_{n+i}=f_{n+i}\circ\dots\circ f_{n+j-1}(z_{n+j}).\]
Let $I\in\mathcal{I}_{n+i}$ denote an interval containing $z_{n+j}$. For all $k\in\mathsf{N}$, we have $f_{n+i+k}(I)=I$. Therefore, $z_{n+i}\in I$, and so
\[d_\Delta(z_{n+i},z_{n+j})\leq\diam_\Delta(I)\leq\max\{\Delta(J)\,|\,J\in\mathcal{I}_{n+i}\}.\]
Since $\max\{\Delta(J)\,|\,J\in\mathcal{I}_{n+i}\}\to0$ as $i\to\infty$, our claim follows.

Next, we claim that $z=\lim_{k\to+\infty}z_{n+k}\in F_0^{-1}(U)$. Via \rf{L:uniform}, we have
\begin{align*}
F_0(z)&=\lim_{m\to+\infty}F_{0,n+m-1}(z_{n+m})\\
&=\lim_{m\to+\infty}F_{0,n}(F_{n+1,n+m-1}(z_{n+m}))=\lim_{m\to+\infty}F_{0,n}(z_{n+1})=F_{0,n}(x)\in U.
\end{align*}
Finally, we claim $F_{n+1}(z)=x$. Again via \rf{L:uniform}, we note that
\[F_{n+1}(z)=\lim_{m\to\infty}F_{n+1,m}(z_{m+1})=\lim_{m\to\infty}f_{n+1}\circ\dots\circ f_{m}(z_{m+1})=z_{n+1}=x\]
Therefore, $x\in F_{n+1}(F_0^{-1}(U))$, and so $F_{0,n}^{-1}(U)\subset F_{n+1}(F_0^{-1}(U))$.
\end{proof}

\begin{lemma}\label{L:component_proj}
Given $m\leq n\in\mathsf{N}$ and $x\in \Gamma$, we have $F_{m,n}(F_{n+1}(x))= F_m(x)$.
\end{lemma}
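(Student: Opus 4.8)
The plan is to reduce the identity to a one-line limiting argument built on the composition (semigroup) identity
\[
F_{m,k}=f_m\circ\cdots\circ f_k=(f_m\circ\cdots\circ f_n)\circ(f_{n+1}\circ\cdots\circ f_k)=F_{m,n}\circ F_{n+1,k},
\]
valid for every $k\ge n+1$ when all these maps are regarded as self-maps of $\mathsf{S}$, together with the uniform convergence from \rf{L:uniform} and the continuity of $F_{m,n}$. Concretely, I would first record that for every $x\in\Gamma$ (viewed as a point of $\mathsf{S}$) the identity above gives $F_{m,k}(x)=F_{m,n}\bigl(F_{n+1,k}(x)\bigr)$ for all $k\ge n+1$; here I am using, as in the paragraph preceding \rf{L:uniform}, that $F_{m,n}$ and $F_{n+1,k}$ may be viewed interchangeably as maps out of $\Gamma$ or out of the $\Gamma_j$, so the composition literally agrees with $F_{m,k}$ pointwise on $\mathsf{S}$.

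Next I would pass to the limit $k\to+\infty$. By \rf{L:uniform} (applied with $m$ and with $n+1$ in place of $m$), we have $F_{n+1,k}(x)\to F_{n+1}(x)$ in $\Gamma_{n+1}$ and $F_{m,k}(x)\to F_m(x)$ in $\Gamma_m$. Since $F_{m,n}\colon\Gamma_{n+1}\to\Gamma_m$ is a composition of the $1$-Lipschitz maps $f_m,\dots,f_n$ of \rf{L:fold}, it is $1$-Lipschitz, hence continuous; applying it to the first convergence yields $F_{m,n}\bigl(F_{n+1,k}(x)\bigr)\to F_{m,n}\bigl(F_{n+1}(x)\bigr)$ in $\Gamma_m$. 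But $\bigl(F_{m,k}(x)\bigr)_k$ and $\bigl(F_{m,n}(F_{n+1,k}(x))\bigr)_k$ are the same sequence of points, so uniqueness of limits in the metric space $\Gamma_m$ forces $F_m(x)=F_{m,n}\bigl(F_{n+1}(x)\bigr)$, which is the claim.

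This is essentially bookkeeping rather than a genuine obstacle: the only thing requiring care is keeping track of which metric each convergence lives in — $d_{n+1}$ for the inner sequence, $d_m$ for the outer one — and observing that the continuity of $F_{m,n}$ is exactly what transports the $d_{n+1}$-convergence of $F_{n+1,k}(x)$ to the $d_m$-convergence needed to match $F_m(x)$. One should also handle the trivial boundary case $m=n$, where $F_{m,n}=f_n$ and the composition identity reads $F_{n,k}=f_n\circ F_{n+1,k}$, so no separate treatment is actually needed. I do not anticipate any technical difficulty beyond this.
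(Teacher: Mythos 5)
Your proof is correct and follows exactly the same strategy as the paper's one-line argument: decompose $F_{m,k}=F_{m,n}\circ F_{n+1,k}$, let $k\to+\infty$, and use the continuity of the $1$-Lipschitz map $F_{m,n}$ together with \rf{L:uniform} to pass the limit inside. The paper simply compresses this into a single chain of equalities $F_{m,n}(F_{n+1}(w))=\lim_{k\to\infty}F_{m,n}(F_{n+1,k}(w))=\lim_{k\to\infty}F_{m,k}(w)=F_m(w)$, whereas you spell out the justifications; there is no substantive difference.
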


\begin{proof}
$F_{m,n}(F_{n+1}(w))=\lim_{k\to\infty}F_{m,n}(F_{n+1,k}(w))=\lim_{k\to\infty}F_{m,k}(w)$.
\end{proof}

Let $W$ denote any fixed $\delta$-component of $F_0^{-1}(H)$. By \rf{L:projection}, we have $F_{n+1}(W)\subset F_{0,n}^{-1}(H)$. Given any $n\in\mathsf{N}$, via \rf{L:uniform}, the set $F_{n+1}(W)$ is $\delta$-connected in $\Gamma_{n+1}$. In particular, it is contained in a single $\delta$-component of $F_{0,n}^{-1}(H)$ in $\Gamma_{n+1}$. We denote this $\delta$-component by $V_{n+1}$. Thus, for every $n\geq1$, write $V_n$ to denote the $\delta$-component of $F_{0,n-1}^{-1}(H)$ containing $F_{n}(W)$. We also write $V_0:=H$. 

\begin{lemma}\label{L:Vs}
For $n\in\mathsf{N}$, we have $f_n(V_{n+1})\subset V_n$. Furthermore, the set $V_{n+1}$ is a $\delta$-component of $f_n^{-1}(V_n)$. 
\end{lemma}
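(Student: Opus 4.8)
# Proof Proposal for Lemma \ref{L:Vs}

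The plan is to prove the two assertions in sequence, using the inclusion $F_n(W) \subset V_n$ together with the basic functoriality relation $f_n \circ F_{n+1} = F_n$ on $W$ (a special case of \rf{L:component_proj} with $m = n$) and the identity $F_{0,n-1}^{-1}(H) = F_{0,n}^{-1}(H)$ pulled back appropriately through $f_n$.

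First I would establish $f_n(V_{n+1}) \subset V_n$. The key point is that $f_n$ maps $F_{0,n}^{-1}(H)$ (a subset of $\Gamma_{n+1}$) into $F_{0,n-1}^{-1}(H)$ (a subset of $\Gamma_n$): indeed if $F_{0,n}(x) \in H$ then $F_{0,n-1}(f_n(x)) = F_{0,n-1}(f_n(x))$, and since $F_{0,n-1} \circ f_n = f_0 \circ \dots \circ f_{n-1} \circ f_n = F_{0,n}$, we get $F_{0,n-1}(f_n(x)) = F_{0,n}(x) \in H$. Moreover $f_n$ is $1$-Lipschitz (\rf{L:fold}), so it carries $\delta$-connected sets to $\delta$-connected sets, hence it carries the $\delta$-component $V_{n+1}$ into a single $\delta$-component of $F_{0,n-1}^{-1}(H)$ in $\Gamma_n$. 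To identify that component as $V_n$, I note that $V_{n+1}$ contains $F_{n+1}(W)$, so $f_n(V_{n+1})$ contains $f_n(F_{n+1}(W)) = F_n(W)$ (using \rf{L:component_proj} with $m=n$), and $V_n$ is by definition the $\delta$-component of $F_{0,n-1}^{-1}(H)$ containing $F_n(W)$. Since $f_n(V_{n+1})$ is $\delta$-connected, meets $V_n$, and lies in $F_{0,n-1}^{-1}(H)$, it must be contained in $V_n$.

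Next I would show $V_{n+1}$ is a $\delta$-component of $f_n^{-1}(V_n)$. From the first part, $V_{n+1} \subset f_n^{-1}(V_n)$, and $V_{n+1}$ is $\delta$-connected, so it lies in some $\delta$-component $\widetilde{V}$ of $f_n^{-1}(V_n)$; I must show $\widetilde{V} = V_{n+1}$, i.e.\ that $\widetilde{V}$ cannot be strictly larger. The crucial observation is that $f_n^{-1}(V_n) \subset F_{0,n}^{-1}(H)$: if $f_n(x) \in V_n \subset F_{0,n-1}^{-1}(H)$, then $F_{0,n}(x) = F_{0,n-1}(f_n(x)) \in H$, so $x \in F_{0,n}^{-1}(H)$. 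Thus any $\delta$-component of $f_n^{-1}(V_n)$ is a $\delta$-connected subset of $F_{0,n}^{-1}(H)$, hence contained in a single $\delta$-component of $F_{0,n}^{-1}(H)$. In particular $\widetilde{V}$, which contains $V_{n+1}$ and hence meets $F_{n+1}(W)$, is contained in $V_{n+1}$ — the $\delta$-component of $F_{0,n}^{-1}(H)$ containing $F_{n+1}(W)$. Combined with $V_{n+1} \subset \widetilde{V}$, this gives $\widetilde{V} = V_{n+1}$.

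The main obstacle, such as it is, is keeping the bookkeeping of indices and ambient spaces straight: $V_n \subset \Gamma_n$ is a $\delta$-component relative to $d_n$ while $V_{n+1} \subset \Gamma_{n+1}$ is relative to $d_{n+1}$, and $f_n$ goes from $\Gamma_{n+1}$ to $\Gamma_n$, so one must consistently track which metric governs $\delta$-connectivity at each stage and verify that the $1$-Lipschitz property of $f_n$ is exactly what makes images of $\delta$-chains remain $\delta$-chains. Once the two inclusions $f_n^{-1}(V_n) \subset F_{0,n}^{-1}(H)$ and $f_n(F_{0,n}^{-1}(H)) \subset F_{0,n-1}^{-1}(H)$ are in place, everything else is a formal comparison of maximal $\delta$-connected sets.
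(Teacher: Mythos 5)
Your proof is correct and takes essentially the same route as the paper: first using $F_{0,n-1}\circ f_n = F_{0,n}$ and the $1$-Lipschitz property of $f_n$ to land $f_n(V_{n+1})$ in the $\delta$-component $V_n$ (via the anchor $F_n(W)=f_n(F_{n+1}(W))$), then using $f_n^{-1}(V_n)\subset F_{0,n}^{-1}(H)$ and maximality of $V_{n+1}$ to conclude. The only small omission is the case $n=0$, where $F_{0,n-1}$ is undefined and $V_0=H$ is not defined as a $\delta$-component; the paper disposes of it in one line by observing $V_1\subset f_0^{-1}(H)$ directly, and you should add that remark.
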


\begin{proof}
If $n=0$, then $V_1\subset F_0^{-1}(H)=f_0^{-1}(H)$, and so $f_0(V_1)\subset H=V_0$. We assume $n\geq1$. Via \rf{L:component_proj}, we have $F_n(W)= f_n(F_{n+1}(W))\subset f_n(V_{n+1})\subset F_{0,n-1}^{-1}(H)$. By definition, $F_n(W)\subset V_n\subset F_{0,n-1}^{-1}(H)$. Therefore, the sets $V_n$ and $f_n(V_{n+1})$ are both subsets of $F_{0,n-1}^{-1}(H)$ and have  non-trivial intersection. Since $f_n:\Gamma_{n+1}\to\Gamma_n$ is $1$-Lipschitz, the set $f_n(V_{n+1})$ is $\delta$-connected. Since $V_n$ is a maximal $\delta$-connected subset of $F_{0,n-1}^{-1}(H)$, we must have $f_n(V_{n+1})\subset V_n$.

Since $V_{n+1}\subset f_n^{-1}(V_n)$ and $V_{n+1}$ is $\delta$-connected,  $V_{n+1}$ is contained in a single $\delta$-component of $f_n^{-1}(V_n)\subset F_{0,n}^{-1}(H)$. Since $V_{n+1}$ is a maximal $\delta$-connected subset of $F_{0,n}^{-1}(H)$, the set $V_{n+1}$ is equal to a single $\delta$-component of $f_n^{-1}(V_n)$.  
\end{proof}

\begin{lemma}\label{L:goal}
There exists $N^*\in\mathsf{N}$ such that, if $n\geq N^*$, then 
\[\diam_\Delta(W)\leq \diam_n(V_n)+\delta.\]
\end{lemma}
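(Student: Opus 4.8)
The goal is to relate the $d_\Delta$-diameter of the $\delta$-component $W$ of $F_0^{-1}(H)$ in $\Gamma$ to the $d_n$-diameter of its projection $V_n$ in $\Gamma_n$. The natural approach is to first pass from $W$ to a dense finite (or countable) subset of dyadic points, control their $d_\Delta$-distances by $d_{n+1}$-distances (or $d_n$-distances) plus a controlled error, and then use that $F_n(W) \subset V_n$ together with the fact that $F_n$ essentially fixes dyadic points up to staying inside small dyadic intervals.

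\begin{proof}[Proof proposal]
Fix $\varepsilon > 0$. Since $d_\Delta$ is realized as a supremum of distances over pairs, and $\mathcal D$ is dense in $\Gamma$, choose points $x, y \in W \cap \mathcal D$ with $d_\Delta(x,y) > \diam_\Delta(W) - \varepsilon$; here one should check that density of $\mathcal D$ together with the $1$-Lipschitz map $F_0$ lets us perturb points of $W$ to nearby dyadic points still lying in $W$ (or, more safely, in a slightly enlarged $\delta'$-component — this minor technical point needs care). Pick $N^* \in \mathsf N$ large enough that $2\max\{\Delta(I) : I \in \mathcal I_{N^*}\} < \varepsilon$; this is possible since these maxima tend to $0$. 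Now fix $n \geq N^*$ and pick $k > n$ with $x, y \in \mathcal D_k$. The plan is to estimate $d_\Delta(x,y)$ from above by $d_n$ applied to points of $V_n$.

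The key intermediate step: by \rf{L:distconverge}, $d_\Delta(x,y) \leq d_n(x,y) + 2\max\{\Delta(I) : I \in \mathcal I_n\} < d_n(x,y) + \varepsilon$. So it suffices to bound $d_n(x,y)$ by $\diam_n(V_n)$. Since $x, y \in W$ and $x = F_0(x)$ (as $x \in \mathcal D_k$ with $k > 0$; recall $F_0$ fixes $\mathcal D_k$... actually one must be careful: $F_0$ fixes $\mathcal D_k$ only when viewed appropriately, but $F_n$ behaves well on dyadic points). Here the cleaner route is: $F_n(x), F_n(y) \in F_n(W) \subset V_n$, so $d_n(F_n(x), F_n(y)) \leq \diam_n(V_n)$. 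It then remains to compare $d_n(x,y)$ with $d_n(F_n(x), F_n(y))$. Since $x, y \in \mathcal D_k$ and for all $m \geq k$ the maps $f_m$ fix $\mathcal D_k$, we have $F_n(x) = F_{n, k-1}(x)$ and $F_n(y) = F_{n,k-1}(y)$, and moreover $F_n(x)$ lies in the same interval of $\mathcal I_n$ as $x$ up to the folding distortion — more precisely, since each $f_m$ maps each interval of $\mathcal I_n$ into itself, $F_n(x)$ and $x$ lie in a common interval $I_x \in \mathcal I_n$, hence $d_n(x, F_n(x)) \leq \diam_n(I_x) \leq \max\{\Delta(I) : I \in \mathcal I_n\}$, and similarly for $y$. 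Combining via the triangle inequality: $d_n(x,y) \leq d_n(F_n(x), F_n(y)) + 2\max\{\Delta(I):I\in\mathcal I_n\} \leq \diam_n(V_n) + \varepsilon$. Chaining everything, $\diam_\Delta(W) - \varepsilon < d_\Delta(x,y) < \diam_n(V_n) + 2\varepsilon$, so $\diam_\Delta(W) \leq \diam_n(V_n) + 3\varepsilon$; since $\delta > 0$ is the target slack and $\varepsilon$ was arbitrary, a cleaner bookkeeping of constants (replacing $3\varepsilon$-type errors by a single $\delta$ via a suitable choice of $N^*$ depending on $\delta$) gives $\diam_\Delta(W) \leq \diam_n(V_n) + \delta$.

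\textbf{Main obstacle.} The delicate point is the passage from arbitrary points of $W$ to dyadic points of $W$: a priori, $W$ need not contain dyadic points close to a given point of $W$, and perturbing a point of $W$ to a nearby dyadic point might leave the $\delta$-component $W$. The honest fix is either to observe that $F_0$ being $1$-Lipschitz forces nearby points of $\Gamma$ to map into $H$-neighborhoods, so nearby dyadic points lie in $F_0^{-1}(H)$ and in the same $\delta$-component provided the perturbation is smaller than the slack between $\diam_0$ of the relevant sets — or to work throughout with the closure $\overline{W}$ and the fact that $\diam_\Delta(W) = \diam_\Delta(\overline W)$, approximating endpoints of a diametral pair by dyadic points. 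Getting this approximation argument airtight, while keeping all error terms absorbed into the single additive $\delta$, is where the real work lies; the rest is triangle inequalities and invocations of \rf{L:distconverge}, \rf{L:uniform}, and the interval-preserving property of the folding maps $f_m$.
\end{proof}
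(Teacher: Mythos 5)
Your key geometric insight is the same as the paper's — namely that for every $m\geq n$ the map $f_m$ sends each interval of $\mathcal I_n$ into itself, so $F_n(x)$ and $x$ always lie in a common interval $I\in\mathcal I_n$ and therefore $d_\Delta(x,F_n(x))\leq\max\{\Delta(I)\,|\,I\in\mathcal I_n\}$. However, the detour through dyadic points is unnecessary, and the ``main obstacle'' you flag at the end (perturbing a point of $W$ to a nearby dyadic point possibly leaves the $\delta$-component) is a self-inflicted difficulty that the correct argument never encounters. The paper works directly with arbitrary $x,y\in W$: first apply the triangle inequality in $d_\Delta$ to write $d_\Delta(x,y)\leq d_\Delta(F_n(x),F_n(y))+2\max\{\Delta(I)\,|\,I\in\mathcal I_n\}$, and only then invoke Lemma~\ref{L:distconverge} applied to the pair $F_n(x),F_n(y)$ to pass from $d_\Delta$ to $d_n$, giving $d_\Delta(F_n(x),F_n(y))\leq\diam_n(F_n(W))+2\max\{\Delta(I)\}\leq\diam_n(V_n)+2\max\{\Delta(I)\}$. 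Choosing $N^*$ so that $\max\{\Delta(I)\,|\,I\in\mathcal I_n\}<\delta/4$ for $n\geq N^*$ absorbs both error terms into $\delta$. In your proposal you instead applied Lemma~\ref{L:distconverge} to $x,y$ directly, which led you to want $x,y\in\mathcal D$; but Lemma~\ref{L:distconverge} holds for all pairs of points in $\mathsf S$, so even that was not forced — the real issue is that the triangle inequality should be carried out in $d_\Delta$ before switching to $d_n$, after which dyadic points play no role. Stripping the dyadic-approximation scaffolding (and the attendant worry about staying inside $W$) from your write-up recovers the paper's short proof.
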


\begin{proof}
Choose $N^*\in\mathsf{N}$ such that, for $n\geq N^*$, we have $\max\{\Delta(I)\,|\,I\in\mathcal{I}_n\}<\delta/4$. Let $x,y\in W$. Since, for $n\in\mathsf{N}$, the map $F_n$ fixes elements of $\mathcal{I}_n$, we note that 
\[d_\Delta(x,y)\leq d_\Delta(F_n(x),F_n(y))+2\max\{\Delta(I)\,|\,I\in\mathcal{I}_n\}<d_\Delta(F_n(x),F_n(y))+\delta/2.\]
Furthermore, via \rf{L:distconverge}, we also have (for $n\geq N^*$)
\[d_\Delta(F_n(x),F_n(y))\leq \diam_\Delta(F_n(W))\leq \diam_n(F_n(W))+\delta/2.\]
Since $F_n(W)\subset V_n$, we conclude that, for any $n\geq N^*$ and any $x,y\in W$, we have
\[d_\Delta(x,y)\leq \diam_n(V_n)+\delta.\]
It follows that $\diam_\Delta(W)\leq \diam_n(V_n)+\delta$.
\end{proof}

\begin{lemma}\label{L:big}
If, for some $n\in\mathsf{N}$, the set $V_n$ is contained in an interval $I_n\in\mathcal{I}_n$ and $\diam_n(V_n)\geq\frac{1}{4}\diam_n(I_n)$, then, for any $k\in\mathsf{N}$, we have $\diam_{n+k}(V_{n+k})\leq4\diam_n(V_n)$. 
\end{lemma}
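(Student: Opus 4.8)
The plan is to reduce the whole statement to a single containment, namely that $V_{n+k}\subseteq I_n$ (as subsets of $\mathsf{S}$) for every $k\in\mathsf{N}$, not just for $k=0$. Granting this, the conclusion is immediate: monotonicity of diameter gives $\diam_{n+k}(V_{n+k})\leq\diam_{n+k}(I_n)$, and since $I_n\in\mathcal{I}_n\subseteq\hI_{n+k}$, the string of equalities in \rf{E:intn} (applied at the two truncation levels $n$ and $n+k$) shows that $\diam_{n+k}(I_n)=\Delta(I_n)=\diam_n(I_n)$; in other words, the $d_j$-diameter of the fixed dyadic interval $I_n$ does not depend on the truncation level $j\geq n$. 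Combining this with the hypothesis $\diam_n(I_n)\leq 4\diam_n(V_n)$ then yields $\diam_{n+k}(V_{n+k})\leq\diam_n(I_n)\leq 4\diam_n(V_n)$, which is exactly what is wanted (and the factor $4$ never degrades, precisely because $I_n$ stays put at level $n$).

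It remains to prove $V_{n+k}\subseteq I_n$, which I would do by induction on $k$. The base case $k=0$ is the hypothesis of the lemma. For the inductive step, suppose $V_{n+k}\subseteq I_n$. By \rf{L:Vs} (with $n$ replaced by $n+k$), the set $V_{n+k+1}$ is a $\delta$-component of $f_{n+k}^{-1}(V_{n+k})$, so in particular $V_{n+k+1}\subseteq f_{n+k}^{-1}(V_{n+k})\subseteq f_{n+k}^{-1}(I_n)$; hence it suffices to check that $f_{n+k}^{-1}(I_n)=I_n$. Now $I_n$ is a finite union of intervals of $\mathcal{I}_{n+k}$ (its $2^{k}$ dyadic descendants at level $n+k$), and $f_{n+k}$ is constructed interval-by-interval over $\mathcal{I}_{n+k}$: on each $I\in\mathcal{I}_{n+k}$ it is either the identity or a folding map, and in either case it carries $I$ onto $I$, fixes the two endpoints of $I$, and sends the interior of $I$ into the interior of $I$. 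It follows that $f_{n+k}$ maps $I_n$ into $I_n$ and maps each point of $\mathsf{S}\setminus I_n$ to a point of $\mathsf{S}\setminus I_n$, so $f_{n+k}^{-1}(I_n)=I_n$, completing the induction.

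The only step that requires a little care, and the one I regard as the real (modest) obstacle, is the verification that $f_{n+k}^{-1}(I_n)=I_n$ — essentially the assertion that nothing outside $I_n$ can be folded into $I_n$. The key property, which one reads off directly from the explicit piecewise-linear description of a folding map $f\colon I\to I$, is that the only point of $I$ mapped to a given endpoint of $I$ is that endpoint itself (equivalently, the interior of $I$ maps into the interior of $I$ and $f$ is onto $I$). With this in hand, a point $x\in\mathsf{S}\setminus I_n$ lies in the interior of some $I'\in\mathcal{I}_{n+k}$ disjoint from $I_n$ (or on a boundary point shared by two such $I'$, where $f_{n+k}$ fixes it), and in either situation $f_{n+k}(x)\notin I_n$; meanwhile the endpoints of $I_n$, being shared with neighbouring level-$(n+k)$ intervals, already lie in the (closed) interval $I_n$ and are fixed by $f_{n+k}$. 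Everything else in the argument is routine bookkeeping with \rf{L:Vs} and \rf{E:intn}.
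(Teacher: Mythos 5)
Your proof is correct and takes essentially the same route as the paper: both show by induction on $k$ that $V_{n+k}\subset I_n$, using Lemma~\ref{L:Vs} together with the fact that $f_{n+k}$ preserves the dyadic interval $I_n$, and then conclude via~\eqref{E:intn} that $\diam_{n+k}(V_{n+k})\leq\diam_{n+k}(I_n)=\diam_n(I_n)\leq 4\diam_n(V_n)$. The only difference is cosmetic: the paper writes the step as $f_n(V_{n+1})\subset V_n\subset I_n$ and $f_n(I_n)=I_n$, while you phrase it via $V_{n+k+1}\subset f_{n+k}^{-1}(I_n)=I_n$ and spell out in more detail why $f_{n+k}^{-1}(I_n)=I_n$ (that the folding maps act interval-by-interval on $\mathcal{I}_{n+k}$, fix endpoints, and send interiors into interiors), which is the implicit justification behind the paper's shorter argument.
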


\begin{proof}
We first note that $V_{n+1}\subset I_n$, since, by \rf{L:Vs}, $f_n(V_{n+1})\subset V_n\subset I_n$, and  $f_n(I_n)=I_n$. Via induction, for all $k\in\mathsf{N}$, we have $V_{n+k}\subset I_n$. Therefore, via \rf{E:intn}, we have $\diam_{n+k}(V_{n+k})\leq\diam_{n+k}(I_n)=\diam_n(I_n)\leq 4\diam_n(V_n)$.
\end{proof}

\begin{lemma}\label{L:cases}
Suppose that, for some $n\in\mathsf{N}$, we have
\begin{enumerate}
	\item{$\diam_n(V_n)=\diam_0(V_0)$}
	\item{$V_n$ is the union of two adjacent intervals from $\mathcal{I}_m$, for some $m\geq M^*$,}
	\item{$V_n$ is not symmetric about a point in $\mathcal{D}_n$,}
	\item{$V_n$ is contained in a single interval $I_n\in\mathcal{I}_n$, and}
	\item{$\diam_n(V_n)\leq\frac{1}{4}\diam_n(I_n)$.}
\end{enumerate}
Under these assumptions, $\diam_{n+1}(V_{n+1})\leq 2\diam_n(V_n)$. If $\diam_{n+1}(V_{n+1})>\diam_n(V_n)$, then $\diam_{n+1}(V_{n+1})=2\diam_n(V_n)$ and $V_{n+1}$ is symmetric about a point in $\mathcal{D}_{n+3}\setminus\mathcal{D}_{n+1}$. If, on the other hand, $\diam_{n+1}(V_{n+1})<\diam_n(V_n)$, then $\diam_{n+1}(V_{n+1})=0$ and $V_{n+1}$ is a point in $\mathcal{D}_{n+3}\setminus\mathcal{D}_{n+2}$. 
\end{lemma}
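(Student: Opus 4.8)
The plan is to fix coordinates on the metric spaces $(I_n,d_n)$ and $(I_n,d_{n+1})$ adapted to $f_n|_{I_n}$ and then run a finite case analysis on the location of $V_n$ inside $I_n$. First dispose of the trivial possibility that $f_n$ restricts to the identity on $I_n$ (equivalently, the two children of $I_n$ have $\Delta$-value $\tfrac12\Delta(I_n)$): there $f_n\colon(I_n,d_{n+1})\to(I_n,d_n)$ is an isometry fixing $I_n$ pointwise, so $f_n^{-1}(V_n)=V_n$ with unchanged metric, whence $V_{n+1}=V_n$ and $\diam_{n+1}(V_{n+1})=\diam_n(V_n)$; this satisfies the conclusion, the two strict alternatives being vacuous. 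So assume from now on that $f_n$ folds $I_n$, i.e.\ that the two children of $I_n$ have $\Delta$-value $\Delta(I_n)=:L$.

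Using \rf{E:intn} together with its analogue one level finer, I would identify $(I_n,d_n)$ isometrically with $[0,L]$ and identify $(I_n,d_{n+1})$ with the arc $[0,2L]$ --- whose two halves correspond to the two children of $I_n$, glued at the midpoint of $I_n$ --- together with an extra edge of length $\le L$ joining the endpoints $0$ and $2L$. A direct computation with the six pieces defining the folding map then shows that in these coordinates $f_n$ becomes the piecewise-linear surjection $f_n\colon[0,2L]\to[0,L]$ given by $f_n(s)=s$ on $[0,3L/4]$, $f_n(s)=3L/2-s$ on $[3L/4,5L/4]$, and $f_n(s)=s-L$ on $[5L/4,2L]$, whose only breakpoints are $s=3L/4$ and $s=5L/4$; these have coordinates $3/8$ and $5/8$, hence are elements of $\mathcal D_{n+3}\setminus\mathcal D_{n+2}$. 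In particular the restriction of $f_n$ to each source arc $[0,3L/4]$, $[3L/4,5L/4]$, $[5L/4,2L]$ is an isometry onto its image; the preimage of a subinterval of $(L/4,3L/4)$ consists of three disjoint isometric copies of it, the preimage of a subinterval of $[0,L/4)$ or $(3L/4,L]$ is a single isometric copy, and the only further coincidences or extra preimages occur at the two breakpoint values $t=L/4$ and $t=3L/4$.

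Two numerical facts drive the case analysis. By hypotheses (1) and (2), $v:=\diam_n(V_n)=\diam_0(V_0)$ equals $2^{1-M^*}$, the $d_0$-diameter of a union of two adjacent level-$M^*$ dyadic intervals; by \rf{E:delta_size}, $\delta<2^{-M^*}=v/2$, so $v>2\delta$ and any two sets at mutual $d_{n+1}$-distance $\ge v$ lie in distinct $\delta$-components. Moreover $V_n$, being a union of two adjacent level-$m$ dyadic intervals inside $I_n$ with $\diam_n(V_n)\le\tfrac14 L$, forces $m\ge n+3$, so the two endpoints of $V_n$ and the breakpoint values $L/4,3L/4$ all have coordinates that are integer multiples of $v/2$; hence the range of $V_n$ meets $L/4$, resp.\ $3L/4$, either exactly at an endpoint of $V_n$ or at distance $\ge v/2$, and, since $v<L/2$, it cannot reach both. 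The cases are: (a) $V_n$ lies in an injective arc, or in the middle arc with neither endpoint at a breakpoint value --- then $f_n^{-1}(V_n)$ is a union of isometric copies of $V_n$, pairwise more than $\delta$ apart by the grid estimate, so $V_{n+1}$ is one copy and $\diam_{n+1}(V_{n+1})=v$; (b) an endpoint of $V_n$ equals $L/4$ or $3L/4$ and $V_n$ lies on the triple-covered side --- then the two branches meeting at that breakpoint merge there into an interval of $d_{n+1}$-diameter $2v$ symmetric about the breakpoint (an element of $\mathcal D_{n+3}\setminus\mathcal D_{n+1}$), while the remaining copy stays more than $\delta$ away, so $\diam_{n+1}(V_{n+1})\in\{v,2v\}$ and the value $2v$ forces $V_{n+1}$ to be that symmetric interval; (c) $V_n$ lies on the injective side of a breakpoint value that it touches --- then $f_n^{-1}(V_n)$ is a single isometric copy of $V_n$ together with the single breakpoint point (the only remaining preimage), which lies more than $\delta$ away, so $\diam_{n+1}(V_{n+1})\in\{v,0\}$ and the value $0$ forces $V_{n+1}$ to be that breakpoint point (an element of $\mathcal D_{n+3}\setminus\mathcal D_{n+2}$). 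In every case $\diam_{n+1}(V_{n+1})\le 2v=2\diam_n(V_n)$, and the strict alternatives are exactly those claimed. Hypothesis (3) is used only to exclude $V_n$ straddling an endpoint of $I_n$, and hypothesis (4) keeps everything inside $I_n$.

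The step I expect to be the main obstacle is making the coordinate model rigorous: verifying that $d_n$ and $d_{n+1}$ restricted to $I_n$ are genuinely given by the stated internal formulas, and that no chain of dyadic intervals leaving $I_n$ (or using the folding edge) provides a shortcut shorter than the gaps of width $\ge v$ and $\ge L-v$ invoked in the separation claims. Given $v\le L/4$ and $v>2\delta$, this reduces to elementary length estimates via \rf{E:leqn} and \rf{E:intn}, but it carries essentially all of the bookkeeping; the piecewise-linear preimage computation and the three-way case split above are then mechanical.
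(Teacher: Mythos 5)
Your approach is essentially the same as the paper's: both arguments run a finite case analysis on the position of $V_n$ relative to the fold structure of $f_n$ on $I_n$. The paper organizes the cases by which second-generation dyadic sub-interval $I_n^{00},I_n^{01},I_n^{10},I_n^{11}$ contains $V_n$, plus two explicit symmetry cases; you use an equivalent piecewise-linear coordinate model (the formula $f_n(s)=s$, $3L/2-s$, $s-L$ on $[0,3L/4]$, $[3L/4,5L/4]$, $[5L/4,2L]$ is correct, and the breakpoints do land in $\mathcal{D}_{n+3}\setminus\mathcal{D}_{n+2}$).

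There is one genuine gap in your case split. Cases (a), (b), (c) cover: $V_n$ strictly inside an injective region $[0,L/4)\cup(3L/4,L]$ or strictly inside $(L/4,3L/4)$; $V_n$ with an endpoint at a breakpoint value, on the triple-covered side; and $V_n$ with an endpoint at a breakpoint value, on the injective side. But you do not handle the case where $L/4$ (or $3L/4$) lies in the \emph{interior} of $V_n$. Your own grid estimate shows that if that happens it must be the midpoint of $V_n$ (the two distances to the endpoints are positive multiples of $v/2$ summing to $v$), and hypothesis (3) only excludes symmetry about $\mathcal{D}_n$, not about $\mathcal{D}_{n+2}\setminus\mathcal{D}_{n+1}$, so this configuration is genuinely possible. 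This is precisely the paper's Case 5. The conclusion still holds there (the preimage consists of two $\delta$-components, one in $I_n^{00}$ and one with Branches 2 and 3 merging at the breakpoint, each of diameter exactly $v$), but your argument as written does not say so, and the phrase ``meets $L/4$ either exactly at an endpoint or at distance $\ge v/2$'' silently excludes the interior/midpoint possibility. You should add this fourth case. Your concern in the final paragraph about the coordinate model being only an approximation to the true $d_{n+1}$-metric (the chain $\{I_n\}$ gives a shortcut of length $L$ across the ``arc'' $[0,2L]$) is well-placed; the paper sidesteps it by reasoning directly in terms of which quarter $I_n^{jk}$ contains each preimage piece, rather than via arc-length coordinates, and that is worth imitating when you make the separation estimates precise.
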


\begin{proof}
Note that Assumption (3) follows from Assumption (4) when $n\geq1$; we list Assumption (3) to address the case that $n=0$. 

We use binary superscripts to index the four second-generation dyadic sub-intervals $I_n^{00}$, $I_n^{01}$, $I_n^{10}$, and $I_n^{11}$ in $I_n$ such that they proceed consecutively along the positive orientation in $I_n$.

If $f_n$ is the identity on $I_n$, then the lemma is trivial. Therefore, we assume that $f_n$ is a folding map on $I_n$, and we consider the cases below. We preface this case analysis with the reminder that
\[\delta<\frac{1}{2^{M^*}}=\frac{1}{2}\diam_0(V_0)=\frac{1}{2}\diam_n(V_n)\leq\frac{1}{8}\diam_n(I_n).\]

Case 1: $V_n\subset I_n^{00}$. In this case, $f_n^{-1}(V_n)$ consists of either one $\delta$-component or (if $V_n$ contains the right endpoint of $I^{00}_n$) it consists of two. If one, then, via \rf{L:Vs}, we have $V_{n+1}=f_n^{-1}(V_n)\subset I_n^{00}$ and $\diam_{n+1}(V_{n+1})=\diam_n(V_n)$. If two, then one $\delta$-component is contained in $I^{00}_n$ and satisfies $\diam_{n+1}(V_{n+1})=\diam_n(V_n)$ while the other is a single point located at the midpoint of $I^{10}_n$.

Case 2: $V_n\subset I_n^{01}$. In this case, there are at most three $\delta$-components of $f_n^{-1}(V_n)$: one in $I_n^{00}$ and either one or two in $I_n^{10}$. The component in $I_n^{00}$ has $d_{n+1}$-diameter equal to $\diam_n(V_n)$. If there are two components in $I_n^{10}$, then they each have $d_{n+1}$-diameter equal to $\diam_n(V_n)$. If there is one component in $I_n^{10}$, then it has $d_{n+1}$-diameter equal to $2\diam_n(V_n)$, and it is symmetric about the midpoint of $I_n^{10}$. Via \rf{L:Vs}, if $\diam_{n+1}(V_{n+1})>\diam_n(V_n)$, then $V_{n+1}$ is symmetric about a point in $\mathcal{D}_{n+3}\setminus\mathcal{D}_{n+1}$ and $\diam_{n+1}(V_{n+1})=2\diam_n(V_n)$.

Case 3: $V_n\subset I_n^{10}$. By symmetry, we can apply an argument parallel to that used in Case 2 to conclude that $\diam_{n+1}(V_{n+1})\leq 2\diam_n(V_n)$. Furthermore, if $\diam_{n+1}(V_{n+1})>\diam_n(V_n)$, then $V_{n+1}$ is symmetric about a point in $\mathcal{D}_{n+3}\setminus\mathcal{D}_{n+1}$ and $\diam_{n+1}(V_{n+1})=2\diam_n(V_n)$.

Case 4. $V_n\subset I_n^{11}$. By symmetry, we can apply an argument parallel to that used in Case 1 to conclude that, either $V_{n+1}\subset I_n^{11}$ has diameter equal to $\diam_n(V_n)$, or $V_{n+1}$ is a single point at the midpoint of $I_n^{01}$.

Case 5: $V_n$ is symmetric about a point in $\mathcal{D}_{n+2}\setminus\mathcal{D}_{n+1}$. In this case, $f_n^{-1}(V_n)$ consists of two $\delta$-components. One is contained in $I_n^{00}$ (or $I_n^{11}$), and the other is contained in $I_n^{10}$ (or $I_n^{01}$). Each component has $d_{n+1}$-diameter equal to $\diam_n(V_n)$. Via \rf{L:Vs}, $\diam_{n+1}(V_{n+1})=\diam_n(V_n)$.

Case 6: $V_n$ is symmetric about a point in $\mathcal{D}_{n+1}\setminus\mathcal{D}_n$. In this case, there are three $\delta$-components of $f_n^{-1}(V_n)$, and each has $d_{n+1}$-diameter equal to $\diam_n(V_n)$. We note that one of these $\delta$-components is symmetric about a point in $\mathcal{D}_{n+1}\setminus\mathcal{D}_n$. In particular, this is the only case in which $V_{n+1}$ might not be contained in a single interval from $\mathcal{I}_{n+1}$. Via \rf{L:Vs}, $\diam_{n+1}(V_{n+1})=\diam_n(V_n)$.

Having exhausted the possible cases, we conclude the proof of the lemma.
\end{proof}

\begin{lemma}\label{L:none}
If there exists $K\in\mathsf{N}$ such that, for all $k\leq K$, the set $V_k$ is not symmetric about a point in $\mathcal{D}_{k+2}$, then, either there exists $n\geq N^*$ (for $N^*$ as in \rf{L:goal}) such that $\diam_n(V_n)\leq 16\delta$, or, for all $k\leq K$,
\begin{enumerate}
	\item[$(k.1)$]{$\diam_k(V_k)=\diam_0(V_0)$,}
	\item[$(k.2)$]{$V_k$ is the union of two adjacent intervals from $\mathcal{I}_{m}$ for some $m\geq M^*$,}
	\item[$(k.3)$]{$V_k$ is contained in a single interval $I_k\in\mathcal{I}_k$, and}
	\item[$(k.4)$]{$\diam_k(V_k)\leq \frac{1}{4}\diam_k(I_k)$.}
\end{enumerate}
\end{lemma}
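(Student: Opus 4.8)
The plan is to argue by induction on $k$, using \rf{L:cases} as the inductive engine, and to set things up so that the hypotheses $(k.1)$--$(k.4)$ are exactly what is needed to apply \rf{L:cases} and also exactly what \rf{L:cases} produces at the next level — provided no ``bad'' event intervenes. There are two bad events that could break the chain: either $\diam_{n+1}(V_{n+1})$ becomes strictly larger than $\diam_n(V_n)$ (so $(k.1)$ fails at the next step), or $V_{n+1}$ ceases to be contained in a single dyadic interval of its generation (so $(k.3)$ fails). \rf{L:cases} tells us the first event forces $V_{n+1}$ to be symmetric about a point of $\mathcal{D}_{n+3}\setminus\mathcal{D}_{n+1}$, and the hypothesis of the present lemma (no $V_k$ symmetric about a point of $\mathcal{D}_{k+2}$ for $k\le K$) rules this out as long as $n+1\le K$; likewise a look at \rf{L:cases} (only Case 6 can spread $V_{n+1}$ across two intervals of $\mathcal{I}_{n+1}$, and Case 6 requires $V_n$ symmetric about a point of $\mathcal{D}_{n+1}\setminus\mathcal{D}_n\subset\mathcal{D}_{n+2}$) rules out the second event. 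The only other branch in \rf{L:cases} is $\diam_{n+1}(V_{n+1})<\diam_n(V_n)$, which forces $\diam_{n+1}(V_{n+1})=0$; but then the diameter has collapsed and the ``either'' alternative in the statement is satisfied trivially (certainly $0\le 16\delta$, once we check $n+1\ge N^*$ — and if $n+1<N^*$ we instead observe that a point has $d_\Delta$-diameter $0$, so $W$ has small diameter directly via \rf{L:goal}, or we simply note the claim is about $n\ge N^*$ and a collapsed $V_{n+1}$ stays collapsed by \rf{L:Vs}, \rf{L:fold}).

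More concretely, I would proceed as follows. First dispose of the base case: $(0.1)$ holds by the definition $V_0=H$ and the choice of $M^*$; $(0.2)$ holds since $H=I\cup J$ with $I,J\in\mathcal{I}_{M^*}$; for $(0.3)$ and $(0.4)$ one must be slightly careful because $V_0=H$ need not sit inside a single interval of $\mathcal{I}_0$ — but $\mathcal{I}_0=\{\mathsf{S}\}$, so $I_0=\mathsf{S}$, $(0.3)$ is automatic, and $(0.4)$ reads $\diam_0(V_0)=2^{-M^*}\cdot(\text{something}\le 1)\le\frac14\diam_0(\mathsf{S})=\frac14$, valid as soon as $M^*\ge 3$, which we have assumed. (Here I would use \rf{E:intn} to identify $\diam_0$ of a dyadic interval with its $\Delta_0$-length $2^{-l}$.) Then, assuming $(k.1)$--$(k.4)$ for some $k<K$ and that no $n\ge N^*$ with $\diam_n(V_n)\le 16\delta$ has yet appeared, I apply \rf{L:cases} with $n=k$: Assumptions (1), (4), (5) of \rf{L:cases} are $(k.1)$, $(k.3)$, $(k.4)$ respectively; (2) is $(k.2)$; and (3) holds because $V_k$ is not symmetric about a point of $\mathcal{D}_{k+2}\supset\mathcal{D}_k$. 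The conclusion of \rf{L:cases} then gives three cases. In the case $\diam_{k+1}(V_{k+1})>\diam_k(V_k)$, $V_{k+1}$ is symmetric about a point of $\mathcal{D}_{k+3}\setminus\mathcal{D}_{k+1}\subset\mathcal{D}_{(k+1)+2}$, contradicting the hypothesis since $k+1\le K$ — so this case does not occur. In the case $\diam_{k+1}(V_{k+1})<\diam_k(V_k)$, we get $\diam_{k+1}(V_{k+1})=0$; by \rf{L:Vs} and \rf{L:fold} all later $V_n$ are single points, so $\diam_n(V_n)=0\le 16\delta$ for every $n$, and in particular for some $n\ge N^*$, placing us in the first alternative of the lemma. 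Otherwise $\diam_{k+1}(V_{k+1})=\diam_k(V_k)$: then $(k{+}1.1)$ holds; $(k{+}1.2)$ holds because $V_{k+1}$, having the same $d_0$- (equivalently $d_{k+1}$-, by $(k{+}1.1)$ and \rf{E:intn}) diameter, is again a union of two adjacent dyadic intervals of the same generation $m\ge M^*$ — this should be read off from the explicit descriptions of the $\delta$-components in the non-symmetric Cases 1--4 of \rf{L:cases}; $(k{+}1.3)$ holds because we are not in Case 6 (that would need $V_k$ symmetric about a point of $\mathcal{D}_{k+1}\setminus\mathcal{D}_k\subset\mathcal{D}_{k+2}$); and $(k{+}1.4)$ follows from $(k{+}1.1)$, $(k{+}1.3)$, and the inequality $\diam_{k+1}(V_{k+1})=\diam_0(V_0)=2^{-M^*}\cdot(\le 1)\le\frac14\diam_{k+1}(I_{k+1})$, the last step using \rf{E:intn} to evaluate $\diam_{k+1}(I_{k+1})=2^{-l(I_{k+1})}$ together with $l(I_{k+1})\le M^*-1$ (which itself comes from $(k{+}1.2)$: $V_{k+1}$ spans two intervals of $\mathcal{I}_m$, $m\ge M^*$, and lies in one interval $I_{k+1}$, forcing $l(I_{k+1})<m$, hence $l(I_{k+1})\le m-1$; to get $l(I_{k+1})\le M^*-1$ one uses that two adjacent dyadic intervals of generation $m$ lie in a common dyadic interval of generation $m-1$, so $I_{k+1}$ may be taken to be that one). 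Iterating this up to $k=K$ completes the induction.

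The step I expect to be the main obstacle is verifying $(k{+}1.2)$ — that the surviving $\delta$-component $V_{k+1}$ is again exactly a union of two adjacent intervals from a single generation $\mathcal{I}_{m'}$ with $m'\ge M^*$ — together with the bookkeeping that pins down $l(I_{k+1})$ for $(k{+}1.4)$. This is not formally hard, but it requires extracting, from each of the non-symmetric cases of \rf{L:cases}, the precise shape of the $\delta$-component that is selected as $V_{k+1}$, and checking that the ``union of two adjacent equal dyadic intervals'' structure is preserved under the folding map's action (including in the subcases where the component picks up a reflected copy and in Case 1/Case 4 where it may stay put). I would handle this by pointing back to the explicit component descriptions already recorded in the proof of \rf{L:cases} rather than redoing the case analysis, and by isolating the elementary dyadic fact ``two adjacent intervals of $\mathcal{I}_m$ sit in one interval of $\mathcal{I}_{m-1}$'' as the one combinatorial input needed to control $l(I_{k+1})$. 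Everything else — the symmetry-exclusion arguments and the diameter arithmetic — is a direct bookkeeping consequence of \rf{L:cases}, \rf{L:Vs}, and \rf{E:intn}.
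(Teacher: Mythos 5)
Your overall plan — induction on $k$ with \rf{L:cases} as the engine, using the hypothesis that no $V_k$ is symmetric about a point of $\mathcal{D}_{k+2}$ to kill the diameter‑changing branches of \rf{L:cases}, and checking that the folding map preserves the ``two adjacent dyadic intervals'' shape when the diameter is preserved — is exactly the paper's strategy, and the base case and the exclusion of the diameter‑\emph{growing} branch are handled the same way. Two parts, however, diverge from the paper, and one of them is a genuine error.

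First, the diameter‑\emph{shrinking} branch. You treat it by arguing that once $\diam_{k+1}(V_{k+1})=0$, all later $V_n$ stay single points and so $\diam_n(V_n)=0\le 16\delta$ eventually, and you hedge about how to finish. This is shakier than it needs to be: if $V_{k+1}$ is the single point $p\in\mathcal{D}_{k+3}\setminus\mathcal{D}_{k+2}$, the map $f_{k+1}$ need not fix $p$ (it only fixes $\mathcal{D}_{k+1}$), so $f_{k+1}^{-1}(p)$ can be several points, and whether the relevant $\delta$-component of the preimage is again a single point requires an additional argument. The paper avoids this entirely: a singleton $\{p\}$ is (trivially) symmetric about $p$, and \rf{L:cases} places $p$ in $\mathcal{D}_{k+3}\setminus\mathcal{D}_{k+2}\subset\mathcal{D}_{(k+1)+2}$, so this branch is ruled out by the same symmetry hypothesis that kills the growing branch. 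No ``stays collapsed forever'' claim is needed.

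Second, and more seriously, your derivation of $(k{+}1.4)$ does not work. By $(k{+}1.3)$ the interval $I_{k+1}$ lies in $\mathcal{I}_{k+1}$, so $l(I_{k+1})=k+1$ \emph{by definition}; the inequality $l(I_{k+1})\le M^*-1$ you want would require $k\le M^*-2$, which is not part of the hypothesis (and fails once the induction runs past $M^*-2$). The supporting claim that two adjacent generation-$m$ intervals always lie in a common generation-$(m{-}1)$ interval is also false when they straddle a point of $\mathcal{D}_{m-1}$. In addition, \rf{E:intn} gives $\diam_{k+1}(I_{k+1})=\Delta(I_{k+1})$, which for a generic $\Delta\in\mathcal{S}_1$ is not $2^{-l(I_{k+1})}$; that formula is the $d_0$-diameter, not the $d_{k+1}$-diameter. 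The paper's route to $(k{.}4)$ bypasses all of this: since $V_k$ is a union of two adjacent dyadic intervals inside $I_k$, the ratio $\diam_k(V_k)/\diam_k(I_k)$ is a power of $\tfrac12$, and the only values exceeding $\tfrac14$, namely $1$ and $\tfrac12$, would force $V_k$ to be symmetric about a point of $\mathcal{D}_{k+1}$ or $\mathcal{D}_{k+2}$ — again excluded by hypothesis. (Also a small slip: $\diam_0(V_0)=2\cdot 2^{-M^*}$, not $2^{-M^*}\cdot(\le 1)$, though this is harmless since $M^*\ge 3$.)

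So: correct skeleton and correct base case, but you should replace the collapse-branch argument by the direct symmetry observation, and replace the $(k{+}1.4)$ computation by the ``ratio is a power of $\tfrac12$, and $1,\tfrac12$ are symmetric cases'' dichotomy.
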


\begin{proof}
Suppose $K\in\mathsf{N}$ is such that, for all $k\leq K$, no set $V_k$ is symmetric about a point in $\mathcal{D}_{k+2}$. In preparation for an inductive argument, we affirm the base case  $k=0\leq K$. Indeed, for $V_0=H$, we have
\begin{itemize}
	\item[(0.1)]{$\diam_0(V_0)=\diam_0(V_0)$,}
	\item[(0.2)]{$V_0$ is the union of two adjacent intervals from $\mathcal{I}_{M^*}$, and}
	\item[(0.3)]{$V_0$ is contained in a single interval $I_0\in\mathcal{I}_0$.}
	\item[(0.4)]{$\diam_0(V_0)\leq \frac{1}{4}\diam_0(I_0)$.}
\end{itemize}
Here we recall that $M^*\geq 3$. To proceed, we assume that, either there exists $n\geq N^*$ such that $\diam_n(V_n)\leq 16\delta$, or, for all $n\leq k-1\leq K-1$, we have
\begin{itemize}
	\item[$(n.1)$]{$\diam_n(V_n)=\diam_0(V_0)$,}
	\item[$(n.2)$]{$V_n$ is the union of two adjacent intervals from $\mathcal{I}_{m}$, for some $m\geq M^*$,}
	\item[$(n.3)$]{$V_n$ is contained in a single interval $I_n\in\mathcal{I}_n$, and}
	\item[$(n.4)$]{$\diam_n(V_n)\leq \frac{1}{4}\diam_n(I_n)$.}
\end{itemize}
Therefore, either there exists $n\geq N^*$ such that $\diam_n(V_n)\leq 16\delta$, or we satisfy the assumptions of \rf{L:cases} for $V_{k-1}$. Since $V_k$ is not symmetric about $\mathcal{D}_{k+2}$, \rf{L:cases} tells us that 
\begin{itemize}
	\item[$(k.1)$]{$\diam_k(V_k)=\diam_{k-1}(V_{k-1})$.}
\end{itemize}
We note that, if $f_{k-1}$ is the identity on $I_{k-1}$, then $V_k=V_{k-1}$. If $f_{k-1}$ is a folding map on $I_{k-1}$, then $V_k$ is the union of two adjacent intervals in $\mathcal{I}_{m+1}$ (here we are using (k.1)). In either case, 
\begin{itemize}
	\item[$(k.2)$]{$V_k$ is the union of two adjacent intervals in $\mathcal{I}_{m}$, for some $m\geq M^*$.}
\end{itemize}
Furthermore, since $V_{k-1}$ is not symmetric about a point in $\mathcal{D}_{k}$, Case 6 (in the proof of \rf{L:cases}) cannot occur. It follows that
\begin{itemize}
	\item[$(k.3)$]{$V_k$ is contained in a single interval $I_k\in\mathcal{I}_k$.}
\end{itemize}
Furthermore, if $\diam_k(V_k)>\frac{1}{4}\diam_k(I_k)$, then, (since $V_k$ is the union of two adjacent dyadic intervals) we must have either $\diam_k(V_k)=\frac{1}{2}\diam_k(I_k)$ or $\diam_k(V_k)=\diam_k(I_k)$. Since, by assumption, $V_k$ is not symmetric about a point in $\mathcal{D}_{k+2}$, neither case can occur. Therefore,
\begin{itemize}
	\item[$(k.4)$]{$\diam_k(V_k)\leq\frac{1}{4}\diam_k(I_k)$.}
\end{itemize}

Thus we conclude our inductive argument, and the proof of the lemma.
\end{proof}

\begin{lemma}\label{L:symm_next}
Suppose there exist $n\leq K\in\mathsf{N}$ such that, for all $n\leq k\leq K$, the set $V_{k}$ is not symmetric about a point in $\mathcal{D}_{k+1}\setminus\mathcal{D}_{k}$, and $n\in\mathsf{N}$ is such that
\begin{enumerate}
	\item[$(n.1)$]{Either $\diam_n(V_n)=0$ or $\diam_0(V_0)\leq\diam_n(V_n)\leq2\diam_0(V_0)$}
	\item[$(n.2)$]{$V_n$ is symmetric about a point in $\mathcal{D}_{n+2}\setminus\mathcal{D}_{n+1}$,}
	\item[$(n.3)$]{$V_n$ is contained in a single interval $I_n\in\mathcal{I}_n$, and}
	\item[$(n.4)$]{$\diam_n(V_n)\leq \frac{1}{4}\diam_n(I_n)$.}
\end{enumerate}
Under these assumptions on $n$ and $K$, for all $n\leq k\leq K$, it is true that
\begin{enumerate}
	\item[$(k.1)$]{$\diam_k(V_k)=\diam_n(V_n)$.}
	\item[$(k.2)$]{$V_{k}$ is symmetric about a point in $\mathcal{D}_{k+2}\setminus\mathcal{D}_{k+1}$,}
	\item[$(k.3)$]{$V_{k}$ is contained in a single interval $I_{k}\in\mathcal{I}_{k}$, and}
	\item[$(k.4)$]{$\diam_{k}(V_{k})\leq \frac{1}{4}\diam_n(I_{k})$.}
\end{enumerate}
\end{lemma}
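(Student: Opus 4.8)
The plan is to argue by induction on $k$, from $k=n$ up to $k=K$; the base case $k=n$ is precisely the hypothesis $(n.1)$--$(n.4)$. For the inductive step, fix $n\le k-1<K$, assume $(k{-}1.1)$--$(k{-}1.4)$, and deduce $(k.1)$--$(k.4)$.

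\emph{First I would show that $f_{k-1}$ must be a folding map on $I_{k-1}$.} By $(k{-}1.2)$ the symmetry center $p$ of $V_{k-1}$ lies in $\mathcal D_{k+1}\setminus\mathcal D_k$, so it is the midpoint of a level-$k$ interval; by $(k{-}1.3)$ and $(k{-}1.4)$ that interval is one of the two children of $I_{k-1}$, and $V_{k-1}$ sits symmetrically about $p$ well inside $I_{k-1}$. If $f_{k-1}$ were the identity on $I_{k-1}$, then $d_k=d_{k-1}$ on $I_{k-1}$, one checks $f_{k-1}^{-1}(V_{k-1})=V_{k-1}$, and this set is still $\delta$-connected, so \rf{L:Vs} forces $V_k=V_{k-1}$; but then $V_k$ is symmetric about $p\in\mathcal D_{k+1}\setminus\mathcal D_k$, contradicting the standing hypothesis applied at index $k$ (which is legitimate since $n\le k\le K$). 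Hence $f_{k-1}$ is a folding map on $I_{k-1}$.

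\emph{Next I would run the folding computation.} By the reflection symmetry of the folding map we may assume $p$ is the midpoint of $I_{k-1}^{0}$. Combining \rf{E:delta_size}, the equality $\diam_{k-1}(V_{k-1})=\diam_n(V_n)$ from $(k{-}1.1)$, the dichotomy in $(n.1)$, and $(k{-}1.4)$, one obtains $\delta<\tfrac18\diam_{k-1}(I_{k-1})$ --- exactly the estimate under which the analysis of Case 5 in the proof of \rf{L:cases} applies (and that analysis uses nothing about $\diam_{k-1}(V_{k-1})$ beyond this inequality and $(k{-}1.4)$). It yields that $f_{k-1}^{-1}(V_{k-1})$ splits into precisely two $\delta$-components, each of $d_k$-diameter $\diam_{k-1}(V_{k-1})$: one inside $I_{k-1}^{00}$, symmetric about the midpoint of $I_{k-1}^{00}$ (coming from the isometry $f_{k-1}\colon(I_{k-1}^{00},d_k)\to(I_{k-1}^{0},d_{k-1})$), and one inside $I_{k-1}^{10}$, symmetric about the midpoint of $I_{k-1}^{10}$ (coming from the fold of $(I_{k-1}^{10},d_k)$ onto $(I_{k-1}^{01},d_{k-1})$); one also checks these two components are $\delta$-separated. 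By \rf{L:Vs}, $V_k$ is one of the two. The degenerate case $\diam_n(V_n)=0$ is the same, more simply, with $V_{k-1}=\{p\}$ and $f_{k-1}^{-1}(V_{k-1})$ the two-point set consisting of the midpoints of $I_{k-1}^{00}$ and $I_{k-1}^{10}$.

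\emph{Finally I would read off the four conclusions.} In either case $\diam_k(V_k)=\diam_{k-1}(V_{k-1})=\diam_n(V_n)$ by $(k{-}1.1)$, giving $(k.1)$. Since $I_{k-1}^{00},I_{k-1}^{10}\in\mathcal I_{k+1}$, their midpoints lie in $\mathcal D_{k+2}\setminus\mathcal D_{k+1}$, giving $(k.2)$; and $I_{k-1}^{00}\subset I_{k-1}^{0}\in\mathcal I_k$ while $I_{k-1}^{10}\subset I_{k-1}^{1}\in\mathcal I_k$, so taking $I_k$ to be whichever of these contains $V_k$ gives $(k.3)$. For $(k.4)$: since $f_{k-1}$ is a folding map, $\Delta(I_k)=\Delta(I_{k-1})$, hence $\diam_k(I_k)=\diam_{k-1}(I_{k-1})$ by \rf{E:intn}, so $(k{-}1.4)$ and $(k.1)$ give $\diam_k(V_k)=\diam_{k-1}(V_{k-1})\le\tfrac14\diam_{k-1}(I_{k-1})=\tfrac14\diam_k(I_k)$. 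I expect the folding computation to be the main obstacle: one must verify, in this slightly broader setting where $\diam_{k-1}(V_{k-1})$ may be as large as $2\diam_0(V_0)$ rather than exactly $\diam_0(V_0)$, that $f_{k-1}^{-1}(V_{k-1})$ really breaks into exactly two $\delta$-components with the stated diameters, symmetry centers, and $\delta$-separation --- i.e.\ the Case 5 bookkeeping, re-derived from \rf{E:delta_size} and $(k{-}1.4)$ rather than quoted verbatim from \rf{L:cases}.
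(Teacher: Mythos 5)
Your proof is correct and follows essentially the same route as the paper: induct on $k$, rule out $f_{k-1}$ being the identity by invoking the standing non-symmetry hypothesis at index $k$ (since otherwise $V_k=V_{k-1}$ would be symmetric about a point of $\mathcal D_{k+1}\setminus\mathcal D_k$), observe $\delta<\tfrac18\diam_{k-1}(I_{k-1})$, and then read off $(k.1)$--$(k.4)$ from the Case~5 folding analysis. The only difference is one of exposition --- the paper compresses the folding computation into ``it is clear that,'' while you spell it out and explicitly re-derive the Case~5 bookkeeping under the weaker hypothesis $\diam_{k-1}(V_{k-1})\le 2\diam_0(V_0)$.
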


\begin{proof} By way of induction, we first note that the base case $k=n$ is included in our assumptions. Thus, we assume that $K>n$ and, for all $n\leq j\leq k-1\leq K-1$,
\begin{itemize}
	\item[$(j.1)$]{$\diam_j(V_j)=\diam_n(V_n)$,}
	\item[$(j.2)$]{$V_{j}$ is symmetric about a point in $\mathcal{D}_{j+2}\setminus\mathcal{D}_{j+1}$,}
	\item[$(j.3)$]{$V_{j}$ is contained in a single interval $I_{j}\in\mathcal{I}_{j}$, and}
	\item[$(j.4)$]{$\diam_{j}(V_{j})\leq \frac{1}{4}\diam_{j}(I_{j})$.}
\end{itemize}
We prove that the analogous conclusions hold for $V_{k}$. Indeed, if $f_{k-1}$ is the identity on $I_{k-1}$, then $V_{k}=V_{k-1}$ is symmetric about a point in $\mathcal{D}_{k+1}\setminus\mathcal{D}_k$. Since, by assumption, this cannot occur, we only need to consider the case that $f_{k-1}$ is a folding map on $I_{k-1}$. Since $\delta<\frac{1}{2}\diam_0(V_0)\leq\frac{1}{8}\diam_{k-1}(I_{k-1})$, it is clear that
\begin{enumerate}
	\item[$(k.1)$]{$\diam_k(V_k)=\diam_{k-1}(V_{k-1})$,}
	\item[$(k.2)$]{$V_{k}$ is symmetric about a point in $\mathcal{D}_{k+2}\setminus\mathcal{D}_{k+1}$,}
	\item[$(k.3)$]{$V_{k}$ is contained in a single interval $I_{k}\in\mathcal{I}_{k}$, and}
	\item[$(k.4)$]{$\diam_{k}(V_{k})\leq \frac{1}{4}\diam_{k}(I_{k})$.}
\end{enumerate}
This completes the inductive argument, and the proof of the lemma.
\end{proof}

\begin{lemma}\label{L:not_middle}
Suppose there exist $n\leq K\in\mathsf{N}$ such that, for all $n\leq k\leq K$, the set $V_{k}$ is not symmetric about a point in $\mathcal{D}_{k}$, and $n$ is such that
\begin{enumerate}
	\item[$(n.1)$]{Either $\diam_n(V_n)=0$ or $\diam_0(V_0)\leq\diam_n(V_n)\leq2\diam_0(V_0)$,}
	\item[$(n.2)$]{$V_n$ is symmetric about a point in $\mathcal{D}_{n+1}\setminus\mathcal{D}_n$,}
	\item[$(n.3)$]{$V_n$ is contained in a single interval $I_n\in\mathcal{I}_n$, and}
	\item[$(n.4)$]{$\diam_n(V_n)\leq\frac{1}{4}\diam_n(I_n)$.}
\end{enumerate}
Under these assumptions, for all $n\leq k\leq K$, 
\begin{enumerate}
	\item[$(k.1)$]{$\diam_k(V_k)=\diam_n(V_n)$.}
	\item[$(k.2)$]{$V_k$ is symmetric about a point in $\mathcal{D}_{k+1}$,}
	\item[$(k.3)$]{$V_k$ is contained in a single interval $I_k\in\mathcal{I}_k$, and}
	\item[$(k.4)$]{$\diam_k(V_k)\leq\frac{1}{4}\diam_n(I_k)$.}
\end{enumerate}
\end{lemma}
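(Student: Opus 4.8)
The plan is to argue by induction on $k$, in close parallel with the proof of \rf{L:symm_next}. The base case $k=n$ is exactly the standing hypotheses $(n.1)$--$(n.4)$. For the inductive step I would assume $K>n$ and that the analogues $(j.1)$--$(j.4)$ of the four conclusions hold for all $n\le j\le k-1$, and then deduce $(k.1)$--$(k.4)$ by analyzing the passage from $V_{k-1}$ to $V_k$ provided by \rf{L:Vs}, splitting according to whether $f_{k-1}$ restricts to the identity or to a folding map on $I_{k-1}$.

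First I would dispose of the identity case. If $f_{k-1}$ is the identity on $I_{k-1}$, then $f_{k-1}^{-1}(V_{k-1})=V_{k-1}$, so by \rf{L:Vs} $V_k=V_{k-1}$; but by $(k-1.2)$ this set is symmetric about a point of $\mathcal{D}_k$, contradicting the standing hypothesis that $V_k$ is not symmetric about any point of $\mathcal{D}_k$. Hence $f_{k-1}$ is a folding map on $I_{k-1}$, and in particular $\Delta(I')=\Delta(I_{k-1})$ for each child $I'$ of $I_{k-1}$. Next I would pin down the symmetry of $V_{k-1}$: by $(k-1.2)$ it is symmetric about some point $p\in\mathcal{D}_k$, which, being the midpoint of any pair of points of $V_{k-1}\subset I_{k-1}$ interchanged by the symmetry, lies in $I_{k-1}$; since (as $n\le k-1$) the standing hypothesis forbids $V_{k-1}$ from being symmetric about a point of $\mathcal{D}_{k-1}$, and the only point of $I_{k-1}$ lying in $\mathcal{D}_k\setminus\mathcal{D}_{k-1}$ is the midpoint of $I_{k-1}$, the set $V_{k-1}$ is symmetric about the midpoint of $I_{k-1}$.

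This is precisely the configuration treated in Case~6 of the proof of \rf{L:cases}. Together with $(k-1.3)$, $(k-1.4)$ and the estimate $\delta<\tfrac12\diam_0(V_0)\le\tfrac18\diam_{k-1}(I_{k-1})$ used there (which here follows from $(k-1.1)$, $(k-1.4)$ and $(n.1)$), a direct inspection of the folding map shows that $f_{k-1}^{-1}(V_{k-1})\subset I_{k-1}$ consists of exactly three $\delta$-components, each of $d_k$-diameter $\diam_{k-1}(V_{k-1})$: one symmetric about the midpoint of $I_{k-1}$ and straddling its two children, and two others, each contained in a single child of $I_{k-1}$ and symmetric about that child's midpoint, hence about a point of $\mathcal{D}_{k+1}\setminus\mathcal{D}_k$. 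The standing hypothesis excludes the first of these (it is symmetric about a point of $\mathcal{D}_k$), so $V_k$ is one of the latter two, contained in a child $I_k\in\mathcal{I}_k$ of $I_{k-1}$; this yields $(k.2)$ and $(k.3)$. Then $(k.1)$ follows from $\diam_k(V_k)=\diam_{k-1}(V_{k-1})$ together with $(k-1.1)$, and $(k.4)$ follows from $(k-1.4)$ upon noting that $\diam_k(I_k)=\Delta(I_k)=\Delta(I_{k-1})=\diam_{k-1}(I_{k-1})$, the middle equality holding because $f_{k-1}$ folds on $I_{k-1}$. This closes the induction.

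The step I expect to demand the most care is the bookkeeping just described: confirming that $f_{k-1}^{-1}(V_{k-1})$ really breaks into exactly three $\delta$-components with the stated centres of symmetry and common $d_k$-diameter, and correctly identifying which of them is $V_k$. This is, however, entirely parallel to the analysis already carried out in Case~6 of the proof of \rf{L:cases} and introduces no new ideas; the degenerate subcase $\diam_{k-1}(V_{k-1})=0$, in which $V_{k-1}$ is the single point fixed by $f_{k-1}$ at the midpoint of $I_{k-1}$ and the three components collapse to the three points that $f_{k-1}$ maps onto that midpoint, is handled by the same inspection.
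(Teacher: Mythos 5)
Your proof is correct. The paper itself omits the proof of this lemma, remarking only that it is ``a straightforward inductive argument similar to that used to prove Lemma~\ref{L:symm_next},'' so you are supplying details rather than duplicating them, and the structure you choose is exactly the intended one: induct on $k$, rule out the identity case at the stage $k-1\to k$ via the standing non-symmetry hypothesis (this mirrors the first half of the proof of Lemma~\ref{L:symm_next}), and then analyze the folding case. Your observation that $(k{-}1.2)$ plus the standing hypothesis forces the center of symmetry of $V_{k-1}$ to be the midpoint of $I_{k-1}$, so that the folding analysis is the one from Case~6 of Lemma~\ref{L:cases}, is the key point, and the bound $\delta<\tfrac12\diam_0(V_0)\le\tfrac18\diam_{k-1}(I_{k-1})$ you derive from $(k{-}1.1)$, $(k{-}1.4)$ and $(n.1)$ is what makes that case analysis applicable (noting, as you implicitly do, that Lemma~\ref{L:cases} is not being invoked verbatim since its Assumption~(1) need not hold here, but its Case~6 computation carries over). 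Using the standing hypothesis again to discard the one preimage component that is symmetric about the midpoint of $I_{k-1}$ then yields $(k.2)$, $(k.3)$, and, together with $\diam_k(I_k)=\Delta(I_k)=\Delta(I_{k-1})=\diam_{k-1}(I_{k-1})$ in the folding case, $(k.1)$ and $(k.4)$ as well. Two small remarks: in the degenerate subcase $\diam_{k-1}(V_{k-1})=0$ the bound on $\delta$ is unavailable, so it is cleaner to note directly that the three preimage points of the midpoint, being equally spaced, form either one or three $\delta$-components, and that the one-component case and the middle single-point component are each ruled out by the standing hypothesis; and you in fact prove the slightly stronger statement that the center of symmetry lies in $\mathcal{D}_{k+1}\setminus\mathcal{D}_k$, which is consistent with, and implies, $(k.2)$ as stated.
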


\begin{proof}
The proof consists of a straightforward inductive argument similar to that used to prove \rf{L:symm_next}. For the sake of brevity, we omit the details.
\end{proof}

We are now ready to prove the following, which, via \rfs{L:goal} and \ref{L:def}, will be sufficient to prove that $F_0:\Gamma\to\Gamma_0$ is Lipschitz light.

\begin{lemma}\label{L:final}
There exists $n\in\mathsf{N}$ such that $n\geq N^*$ and $\diam_n(V_n)\leq 128\delta$.
\end{lemma}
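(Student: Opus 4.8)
The plan is to trace the orbit $(V_n)_{n\in\mathsf{N}}$ through the symmetry regimes isolated in \rfs{L:none}, \ref{L:symm_next} and \ref{L:not_middle}, keeping $\diam_n(V_n)$ under control throughout, and to terminate the argument by an application of \rf{L:big}.

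Begin with the elementary observations. We have $\diam_0(V_0)=\diam_0(H)=2^{-M^*+1}$, so \rf{E:delta_size} yields $2\delta<\diam_0(V_0)\le 4\delta$, and, since $M^*\ge 3$, the set $V_0=H$ satisfies the hypotheses $(0.1)$--$(0.4)$ of \rf{L:none}. Note also that a degenerate configuration $\diam_n(V_n)=0$ (a single point) is explicitly allowed by hypothesis $(n.1)$ of \rfs{L:symm_next} and \ref{L:not_middle}, so such configurations are carried along by the argument below rather than requiring separate treatment.

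The core is an induction on $n$ maintaining the invariant that $V_n$ lies in exactly one of the regimes of \rfs{L:none}, \ref{L:symm_next} and \ref{L:not_middle}, together with the bound $\diam_n(V_n)\le 2\diam_0(V_0)\le 8\delta$. Inside each regime the diameter is constant: in the generic regime (\rf{L:none}) it equals $\diam_0(V_0)$, and in the symmetric regimes (\rfs{L:symm_next} and \ref{L:not_middle}) hypothesis $(n.1)$ pins it at its entering value, which is at most $2\diam_0(V_0)$. At a transition between regimes one invokes \rf{L:cases} (for the exit from the generic regime, where the diameter may double once) or a direct inspection of the folding map $f$ on the relevant dyadic interval (for the transitions deep-symmetric $\to$ mid-symmetric $\to$ ``$V$ straddles a dyadic endpoint''): here one uses that $f$ is a homeomorphism on a neighbourhood of the midpoint of any dyadic interval, so that taking preimages creates no new folds and the diameter passes through unchanged, whereas an ``identity'' step of $f$ halves $\diam_n(I_n)$ without changing $\diam_n(V_n)$, hence doubles the ratio $\diam_n(V_n)/\diam_n(I_n)$. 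This ratio is the termination mechanism: it is non-decreasing, and either it stays at most $\tfrac14$ forever---in which case the regime-tracking never ends and $\diam_n(V_n)\le 8\delta$ for every $n$, so we may take $n=N^*$---or it exceeds $\tfrac14$ at some stage, at which point the hypotheses of \rfs{L:none}, \ref{L:cases}, \ref{L:symm_next} and \ref{L:not_middle} all fail, \rf{L:big} applies, and $\diam_{n+j}(V_{n+j})\le 4\diam_n(V_n)\le 32\delta$ for all $j$. Since $\diam_n(V_n)\le 8\delta$ at every stage where the regime-tracking is in force, in all cases there is an $m\ge N^*$ with $\diam_m(V_m)\le 128\delta$, which is what is needed (via \rfs{L:goal} and \ref{L:def}, this shows $F_0:\Gamma\to\Gamma_0$ is Lipschitz light).

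I expect the main obstacle to be the bookkeeping at the regime transitions: one must check, case by case, that whenever $V_k$ leaves a regime the resulting configuration $V_{k+1}$ satisfies the exact diameter, symmetry, single-interval-containment and ratio hypotheses required to enter the next lemma (or \rf{L:big}), and one must argue precisely that the generic regime can be entered only once, so that $\diam_n(V_n)$ doubles at most boundedly often before \rf{L:big} takes over---this is what pins down the constant in the statement. I would therefore organise the proof as a single induction carrying the invariant above, with the application of \rf{L:big} (or the indefinite persistence of the regime-tracking) as the terminal alternatives, ordered by the size of the ``room'' $\diam_n(I_n)/\diam_n(V_n)$.
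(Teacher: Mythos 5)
Your high-level plan---track $V_n$ through the symmetry regimes of \rfs{L:none}, \ref{L:symm_next}, \ref{L:not_middle}, keep the diameter roughly constant within each regime and double it at most once at a transition, and terminate with a ratio argument feeding into \rf{L:big}---is the same strategy the paper uses, and your observation that the generic regime is entered only once is the right reason the diameter stays at most $2\diam_0(V_0)$. But the proposal has a genuine gap at exactly the stage where the paper does the real work.

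Once $V_n$ becomes symmetric about a point $p\in\mathcal{D}_n$ (the stage $n_3$ in the paper), the set $V_n$ is \emph{not} contained in any interval of $\mathcal{I}_n$ for any $n\geq n_3$: it straddles $p$, and for every $m\geq n_3$ the point $p$ is an endpoint, not an interior point, of intervals in $\mathcal{I}_m$. So the smallest dyadic interval containing $V_n$ is pinned at the fixed interval $I_{n_3-1}\in\mathcal{I}_{n_3-1}$ and does not shrink as $n$ grows. Your ratio $\diam_n(V_n)/\diam_n(I_n)$ therefore stops increasing at this stage (the denominator no longer halves), so it need never exceed $\tfrac14$; and \rf{L:big} is simply not applicable to $V_n$ because it requires $V_n\subset I_n\in\mathcal{I}_n$. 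The alternative ``either the ratio stays $\leq\tfrac14$ forever, in which case regime-tracking never ends'' is therefore false: regime-tracking \emph{does} end at $n_3$ with the ratio still below $\tfrac14$, and nothing in your proposal produces a bound after that. The degenerate case $\diam_{n_3}(V_{n_3})=0$, which you claim is carried along automatically, is a second place the ratio argument silently fails (the ratio is $0$ forever), though there the set is frozen and the conclusion is trivial.

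The missing idea is the \emph{splitting}: write $V_n'=V_n\cap I_{n_3}'$ and $V_n''=V_n\cap I_{n_3}''$, the halves of $V_n$ on either side of $p$, and run the ratio argument on each half separately against a deliberately shrinking sequence of dyadic intervals abutting $p$ (right children of $I_{n_3}'$ for the left half, left children of $I_{n_3}''$ for the right). Each half is genuinely contained in intervals $I_n'\in\mathcal{I}_n$ that shrink, the ratio $\diam_n(V_n')/\diam_n(I_n')$ is non-decreasing and doubles at every identity step of $f_n$, and identity steps occur infinitely often since $\diam_n(I_n')\to0$; so the ratio eventually exceeds $\tfrac14$, and an \rf{L:big}-type argument caps $\diam_n(V_n')$ at $64\delta$, similarly for $V_n''$. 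Summing gives the constant $128\delta$. Without this split your constant accounting ($8\delta\to32\delta\to128\delta$) has no derivation, and more importantly the termination mechanism fails in the one case where it is actually needed.
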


\begin{proof}
If there is no index $n_1$ for which $V_{n_1}$ is symmetric about a point in $\mathcal{D}_{n_1+2}$, then, by \rf{L:none}, we conclude that $\diam_N(V_N)=\diam_0(V_0)\leq 4\delta$. Therefore, we may assume $n_1$ is the minimal such index. We first consider the case that $n_1\geq1$. By the definition of $n_1$ and \rf{L:none}, we may assume that
\begin{enumerate}
	\item{$\diam_{n_1-1}(V_{n_1-1})=\diam_0(V_0)$,}
	\item{$V_{n_1-1}$ is the union of two adjacent intervals from $\mathcal{I}_{m}$, for some $m\geq M^*$,}
	\item{$V_{n_1-1}$ is contained in a single interval $I_{n_1-1}\in\mathcal{I}_{n_1-1}$, and}
	\item{$\diam_{n_1-1}(V_{n_1-1})\leq\frac{1}{4}\diam_{n_1-1}(I_{n_1-1})$.}
\end{enumerate}
Via \rf{L:Vs}, it follows from the definition of $f_{n_1-1}$ and the minimality of $n_1$ that $V_{n_1}$ is symmetric about a point in $\mathcal{D}_{n_1+2}\setminus\mathcal{D}_{n_1+1}$, and, either $\diam_{n_1}(V_{n_1})=0$, or 
\[\diam_0(V_0)\leq\diam_{n_1}(V_{n_1})\leq2\diam_{n_1-1}(V_{n_1-1})=2\diam_0(V_0).\]
Furthermore, $V_{n_1}$ is contained in a single interval $I_{n_1}\in\mathcal{I}_{n_1}$. If $\diam_{n_1}(V_{n_1})>\frac{1}{4}\diam_{n_1}(I_{n_1})$, then, via \rf{L:big} and \rf{E:delta_size}, there exists $n\geq N^*$ such that 
\[\diam_n(V_n)\leq4\diam_{n_1}(V_{n_1})\leq 8\diam_0(V_0)\leq 32\delta.\] Therefore, we assume that $\diam_{n_1}(V_{n_1})\leq \frac{1}{4}\diam_{n_1}(I_{n_1})$.

If there is no index $n>n_1$ such that $V_n$ is symmetric about a point in $\mathcal{D}_{n+1}\setminus\mathcal{D}_{n}$, then, by \rf{L:symm_next}, we  conclude that there exists $n\geq N^*$ such that $\diam_n(V_n)\leq 2\diam_0(V_0)\leq8\delta$. Therefore, we may assume that there exists $n_2>n_1$ minimal such that $V_{n_2}$ is symmetric about a point in $\mathcal{D}_{n_2+1}\setminus\mathcal{D}_{n_2}$. Furthermore, the proof of \rf{L:symm_next} makes it clear that $f_{n_2-1}$ is the identity on $I_{n_2-1}$ and so $V_{n_2}=V_{n_2-1}$. Therefore, $V_{n_2}$ is contained in an interval $I_{n_2}\in\mathcal{I}_{n_2}$ and, either $\diam_{n_2}(V_{n_2})=0$, or $\diam_0(V_0)\leq \diam_{n_2}(V_{n_2})\leq2\diam_0(V_0)$. If $\diam_{n_2}(V_{n_2})>\frac{1}{4}\diam_{n_2}(I_{n_2})$, then, via \rf{L:big} and \rf{E:delta_size}, there exists $n\geq N^*$ such that $\diam_n(V_n)\leq 32\delta$. Therefore, we may assume that $\diam_{n_2}(V_{n_2})\leq \frac{1}{4}\diam_{n_2}(I_{n_2})$,

If there is no index $n>n_2$ such that $V_n$ is symmetric about a point in $\mathcal{D}_n$, then, via \rf{L:not_middle} and \rf{E:delta_size}, we conclude that there exists $n\geq N^*$ such that $\diam_n(V_n)\leq 2\diam_0(V_0)\leq 8\delta$. Thus, we assume that there exists $n_3>n_2$ minimal such that $V_{n_3}$ is symmetric about a point in $\mathcal{D}_{n_3}$. 

If $V_{n_3}$ is a single point, then we note that, for all $n\geq n_3$, we have $f_n^{-1}(V_{n_3})=V_{n_3}$ (since $f_n$ fixes points in $\mathcal{D}_{n_3}$). Therefore, there exists $n\geq N^*$ such that $\diam_n(V_n)=0<\delta$. Thus we may assume that $\diam_{n_3}(V_{n_3})>0$. In this case, we note that $n_3$ is minimal such that $V_{n_3}$ is not contained in a single interval from $\mathcal{I}_{n_3}$. It is also easy to verify (via \rf{L:not_middle}) that $\diam_{n_3}(V_{n_3})=\diam_{n_2}(V_{n_2})$. Indeed, $V_{n_3}$ is contained in the interior of the union of two adjacent intervals from $\mathcal{I}_{n_3}$ whose union forms $I_{n_3-1}\in\mathcal{I}_{n_3-1}$. 

Write $I_{n_3}'$ to denote the left dyadic child of $I_{n_3-1}$, and write $V_{n_3}':=V_{n_3}\cap I_{n_3}'$. Inductively, for each $k\geq1$, write $I_{n_3+k}'$ to denote the right dyadic child of $I_{n_3+k-1}'$. Since $\diam_{n_3}(V_{n_3}')\leq\frac{1}{4}\diam_{n_3}(I_{n_3}')$, we have $V_{n_3}'\subset I'_{n_3+2}$. If $\diam_{n_3}(V_{n_3}')=\frac{1}{4}\diam_{n_3}(I_{n_3}')$, then 
\[\diam_{n_3-1}(V_{n_3-1})=2\diam_{n_3}(V_{n_3}')=\frac{1}{2}\diam_{n_3}(I_{n_3}')\geq\frac{1}{4}\diam_{n_3-1}(I_{n_3-1}).\]
Therefore, by \rf{L:big} and \rf{E:delta_size}, there exists $n\geq N^*$ such that 
\[\diam_n(V_n)\leq 4\diam_{n_3-1}(V_{n_3-1})\leq8\diam_0(V_0)\leq32\delta.\]
Therefore, we may assume that $\diam_{n_3}(V_{n_3}')<\frac{1}{4}\diam_{n_3}(I_{n_3}')$.

For each $n\geq n_3$, we define $V_n':=V_n\cap I_{n_3}'$, and we define the ratio
\[R(n):=\frac{\diam_n(V'_n)}{\diam_n(I_n')}.\]
Since $R(n_3)<\frac{1}{4}$, we have $\diam_{n_3+1}(V_{n_3+1}')=\diam_{n_3}(V_{n_3}')$. If $f_{n_3}$ is a folding map on $I_{n_3}'$, then $V_{n_3+1}'\subset I_{n_3+3}'$ and $R(n_3+1)=R(n_3)<\frac{1}{4}$. If $f_{n_3}$ is the identity on $I_{n_3}$, then $R(n_3+1)=2R(n_3)$. If $R(n_3+1)\geq\frac{1}{4}$, then define $n_4:=n_3+1$. If not, then we proceed inductively, and assume that, for all $n_3+1\leq j\leq k-1$, we have $V_{n_3+j}'\subset I_{n_3+j+2}'$, $\diam_{n_3+j}(V_{n_3+j}')=\diam_{n_3}(V_{n_3}')$, and $R(n_3+j)<\frac{1}{4}$. 

Under this inductive hypothesis, we examine $V_{n_3+k}'$. Either $f_{n_3+k-1}$ is a folding map on $I_{n_3+k-1}'$, and $R(n_3+k)=R(n_3+k-1)<\frac{1}{4}$, or $f_{n_3+k-1}$ is the identity on $I_{n_3+k-1}'$, and $R(n_3+k)=2R(n_3+k-1)$. If  $R(n_3+k)\geq\frac{1}{4}$, then write $n_4:=n_3+k$. 

Via induction, we are faced with two possibilities: either there exists $n_4>n_3$ minimal such that $V_{n_4}'\subset I_{n_4}'$ and $R(n_4)\geq\frac{1}{4}$, or, for all $n> n_3$, we have $V_n'\subset I_{n+2}'$ and $R(n)< \frac{1}{4}$. We claim this latter case cannot occur. Indeed, we note that, for all $n>n_3$, we have $R(n+1)\geq R(n)$. Moreover, $R(n+1)>R(n)$ if and only if $f_n$ is the identity on $I_{n}'$ and $R(n+1)=2R(n)$. Since $\diam_n(I_n')\to 0$, the map $f_n$ must be the identity on $I_n'$ infinitely often, and thus $R(n+1)=2R(n)$ infinitely often. This would imply that $R(n)\to+\infty$, and this contradiction proves our claim.

Thus we have $V_{n_4}'=V_{n_4}\cap I_{n_3}'\subset I_{n_4}'$ such that $\diam_{n_4}(V_{n_4}')\geq\frac{1}{4}\diam_{n_4}(I_{n_4}')$. Recall that, for any $n\geq n_4$, the map $f_n$ fixes elements of $\mathcal{I}_{n_4}$ and $\mathcal{I}_{n_3}$. Therefore, for any $n\geq n_4$, we have $V_n'\subset I_{n_4}'$, and so 
\begin{align*}
\diam_n(V_n')&\leq \diam_n(I_{n_4}')=\diam_{n_4}(I_{n_4}')\\
&\leq4\diam_{n_4}(V_{n_4}')=8\diam_{n_3}(V_{n_3})\leq 16\diam_0(V_0)\leq64\delta.
\end{align*}

An analogous argument applies to the set $V_{n_3}'':=V_{n_3}\cap I_{n_3}''$, where $I_{n_3}''$ denotes the right dyadic child of $I_{n_3-1}$. In particular, there exists $n_5>n_3$ such that, if $n\geq n_5$, then $\diam_n(V_n'')\leq 64\delta$. Therefore, there exists $n\geq N^*$ such that
\[\diam_n(V_n)\leq \diam_n(V_n\cap I_{n_3}')+\diam_n(V_n\cap I_{n_3}'')\leq 128\delta.\]

We finish by briefly considering the case that $n_1=0$. If $V_0$ is symmetric about a point in $\mathcal{D}_2\setminus \mathcal{D}_1$, then we argue as in the case that $n_1\geq1$. If $V_0$ is symmetric about a point in $\mathcal{D}_1\setminus\mathcal{D}_0$, then we apply the argument utilized in our above analysis of $V_{n_2}$. If $V_0$ is symmetric about the point in $\mathcal{D}_0$, then we apply (a simple modification of) the argument used in our above analysis of $V_{n_3}$.
\end{proof}

\bibliographystyle{amsalpha}
\bibliography{bib}

\end{document}